\providecommand{\U}[1]{\protect\rule{.1in}{.1in}}
\newtheorem{thm}{Theorem}[section]
\newtheorem{cor}[thm]{Corollary}
\newtheorem{lem}[thm]{Lemma}
\newtheorem{prop}[thm]{Proposition}
\newtheorem{Def}[thm]{Definition}
\numberwithin{equation}{section}
\begin{document}
\title{On ordinal ranks of Baire class functions}

\begin{abstract}
The theory of ordinal ranks on Baire class 1 functions developed by Kechris
and Loveau was recently extended by Elekes, Kiss and Vidny\'{a}nszky to Baire
class $\xi$ functions for any countable ordinal $\xi\geq1$. In this paper, we
answer two of the questions raised by them in \cite{EKV}. Specifically, we
show that for any countable ordinal $\xi\geq1,$ the ranks $\beta_{\xi}^{\ast}$
and $\gamma_{\xi}^{\ast}$ are essentially equivalent, and that neither of them
is essentially multiplicative. Since the rank $\beta$ is not essentially
multiplicative, we investigate further the behavior of this rank with respect
to products. We characterize the functions $f$ so that $\beta(fg)\leq
\omega^{\xi}$ whenever $\beta(g)\leq\omega^{\xi}$ for any countable ordinal
$\xi.$

\end{abstract}
\author{Denny H.\ Leung}
\address{Department of Mathematics, National University of Singapore, Singapore 119076}
\email{matlhh@nus.edu.sg}
\author{Hong-Wai Ng}
\address{School of Physical and Mathematical Sciences, Division of Mathematical
Sciences, Nanyang Technological University, Singapore 637371 }
\email{hongwaing@ntu.edu.sg}
\author{Wee-Kee Tang}
\address{School of Physical and Mathematical Sciences, Division of Mathematical
Sciences, Nanyang Technological University, Singapore 637371 }
\email{WeeKeeTang@ntu.edu.sg }
\thanks{Research of the second and third author are partially supported by AcRF
project no.\ RG26/14.}
\subjclass[2010]{ Primary 26A21, Secondary 03E15, 54H05}
\maketitle

\section{Introduction}

Let $X$ be a Polish space; that is, a separable completely metrizable
topological space. A continuous real valued function $f$ on $X$ is said to be
of \emph{Baire class $0$}. Denote the class of Baire class $0$ functions by
${\mathfrak{B}}_{0}(X)$. Inductively, suppose that $\xi$ is a nonzero
countable ordinal and the space of Baire class $\zeta$ functions,
${\mathfrak{B}}_{\zeta}(X)$, has been defined for all $\zeta<\xi$. A real
valued function $f$ on $X$ is said to be of Baire class $\xi$ if it is a
pointwise limit of a sequence of functions in $\cup_{\zeta<\xi}{\mathfrak{B}%
}_{\zeta}(X)$. In \cite{K-L}, Kechris and Louveau developed the theory of
several ordinal ranks on Baire class $1$ functions. These ranks have their
precedents in the work of Bourgain \cite{B}, Gillespie and Hurwicz \cite{GH},
Haydon, Odell and Rosenthal \cite{H-O-R}, and Zalcwasser \cite{Z}, among
others. The ranks provide a finer measure of the complexity of Baire class $1$
functions and ramify ${\mathfrak{B}}_{1}(X)$ into an increasing transfinite
sequence of subspaces ${\mathfrak{B}}_{1}^{\xi}(X)$ (the small Baire classes).
Recently, Elekes, Kiss and Vidny\'{a}nszky \cite{EKV} extended the theory of
ordinal ranks to Baire class $\xi$ functions for any countable ordinal $\xi$.
Their work leaves a number of open questions, several of them concerning the
behavior of the ranks on unbounded Baire class functions. The main purpose of
this paper is to answer two of the questions raised in \cite{EKV}.
Specifically, it is shown that the ranks $\beta_{\xi}^{\ast}$ and $\gamma
_{\xi}^{\ast}$ are essentially equivalent, and that neither of them is
essentially multiplicative (see definitions below). Since the rank $\beta$ is
not essentially multiplicative, it is worthwhile to investigate further the
behavior of this rank with respect to products. In the last section, we
consider the multiplier problem for the oscillation rank $\beta$ on
${\mathfrak{B}}_{1}(X)$. That is, for any countable ordinal $\xi$, we
characterize the functions $f$ so that $\beta(fg)\leq\omega^{\xi}$ whenever
$\beta(g)\leq\omega^{\xi}$.

Let us recall the definitions of the ranks that will be the chief concern of
this paper.

Let ${\mathcal{C}}$ denote the collection of all closed subsets of $X$. A
\emph{derivation} on ${\mathcal{C}}$ is a map $\delta:{\mathcal{C}}%
\rightarrow{\mathcal{C}}$ such that $\delta(P)\subseteq P$ for all
$P\in{\mathcal{C}}$. Iterate $\delta$ in the usual way: $\delta^{0}(P)=P$,
$\delta^{\alpha+1}(P)=\delta(\delta^{\alpha}(P))$ for any countable ordinal
$\alpha$ and $\delta^{\alpha}(P)=\cap_{\alpha^{\prime}<\alpha}\delta
^{\alpha^{\prime}}(P)$ for any countable limit ordinal $\alpha$. The
\emph{rank} of $\delta$ is defined to be the smallest countable ordinal
$\alpha$ such that $\delta^{\alpha}(X)=\emptyset$ if such an $\alpha$ exists,
and $\omega_{1}$ otherwise.

The oscillation rank $\beta$ is associated with a family of derivations. Let
$\varepsilon>0$ and a function $f:X\rightarrow{\mathbb{R}}$ be given. For any
$F\in{\mathcal{C}}$, let $D^{0}(f,\varepsilon,F)=F$ and $D^{1}(f,\varepsilon
,F)$ be the set of all $x\in F$ such that for any neighborhood $U$ of $x$,
there exist $x_{1},x_{2}\in F\cap U$ such that $\left\vert f(x_{1}%
)-f(x_{2})\right\vert \geq\varepsilon$.

Let $\beta(f,\varepsilon)$ be the rank of the derivation $D^{1}(f,\varepsilon
,\cdot)$. The \emph{oscillation rank} of $f$ is $\beta(f)=\sup_{\varepsilon
>0}\beta(f,\varepsilon)$.

The convergence rank $\gamma$ is defined analogously. Let $\left(
f_{n}\right)  $ be a sequence of real-valued functions and $\varepsilon>0$.
For any $F\in{\mathcal{C}}$, let {$D$}$^{0}(\left(  f_{n}\right)
,\varepsilon,F)=F$ and {$D$}$^{1}(\left(  f_{n}\right)  ,\varepsilon,F)$ be
the set of all $x\in F$ such that for any neighborhood $U$\ and any
$m\in\mathbb{N},$ there are two integers $n_{1}$,\thinspace\ $n_{2}$ with
$n_{1}>n_{2}>m$ and $x^{\prime}\in U\cap F$ such that $\left\vert f_{n_{1}%
}\left(  x^{\prime}\right)  -f_{n_{2}}\left(  x^{\prime}\right)  \right\vert
\geq\varepsilon.$ Let $\gamma\left(  \left(  f_{n}\right)  ,\varepsilon
\right)  $ be the rank of the derivation {$D$}$^{1}(\left(  f_{n}\right)
,\varepsilon,\cdot)$. The \emph{convergence rank} of $\left(  f_{n}\right)  $
is $\gamma(\left(  f_{n}\right)  )=\sup_{\varepsilon>0}\gamma(\left(
f_{n}\right)  ,\varepsilon)$. \ If $f$ is a function of Baire class one, set
$\gamma\left(  f\right)  =\inf\{\gamma(\left(  f_{n}\right)  )\},$where the
infimum is taken over all sequences $\left(  f_{n}\right)  $ of continuous
functions on $X$ converging pointwise to $f.$

If $\tau$ is a Polish topology on $X$, denote the space of Baire class $\xi$
functions on $(X,\tau)$ by ${\mathfrak{B}}_{\xi}(X,\tau),$ or simply
${\mathfrak{B}}_{\xi}(\tau)$. The oscillation rank $\beta$ on ${\mathfrak{B}%
}_{1}(X,\tau)$ is denoted by $\beta_{\tau}$ and its derived sets by $D_{\tau
}^{\eta}(f,\varepsilon,X)$.

Since $\left(  X,\tau\right)  $ is metrizable, every closed set is a
$G_{\delta}$ set. For $1\leq\xi<\omega_{1},$ define the classes $\Sigma_{\xi
}^{0}(\tau)$ and $\Pi_{\xi}^{0}(\tau)$ recursively as follows. Set $\Sigma
_{1}^{0}(\tau)=\tau,$
\[
\Pi_{\xi}^{0}(\tau)=\left\{  F:F^{c}\in\Sigma_{\xi}^{0}(\tau)\right\}  ,
\]
and
\[
\Sigma_{\xi}^{0}\left(  \tau\right)  =\left\{  \cup_{n}A_{n}:A_{n}\in\Pi
_{\xi_{n}}^{0}(\tau),\xi_{n}<\xi\right\}  \text{ if }\xi>1.
\]
In addition, $\Delta_{\xi}^{0}(\tau)=\Sigma_{\xi}^{0}(\tau)\cap\Pi_{\xi}%
^{0}(\tau)$ are called the ambiguous classes.

A rank $\rho$ in ${\mathfrak{B}}_{1}(X)\ $may be extended to a rank $\rho
_{\xi}^{\ast}$ in ${\mathfrak{B}}_{\xi}(X)$ for any countable ordinal $\xi$ in
the following manner. Let $f\in{\mathfrak{B}}_{\xi}(X),$ consider the set of
topologies $T_{f,\xi}=\left\{  \tau^{\prime}:\tau^{\prime}\supseteq\tau\text{
Polish, }\tau^{\prime}\subseteq\Sigma_{\xi}^{0}\left(  \tau\right)
,f\in\mathcal{B}_{1}\left(  \tau^{\prime}\right)  \right\}  ,$ i.e.,
$T_{f,\xi}$ is the set of Polish refinements $\tau^{\prime}\,$\ of the
original topology $\tau$ that are subsets of $\Sigma_{\xi}^{0}\left(
\tau\right)  $ so that $f$ is of Baire class one under $\tau^{\prime}.$ It can
be shown that $T_{f,1}=\left\{  \tau\right\}  $ and $T_{f,\xi}\neq\emptyset$
for every $f\in{\mathfrak{B}}_{\xi}(X)$ (\cite[5.2]{EKV}). Now define
\[
\rho_{\xi}^{\ast}\left(  f\right)  =\min_{\tau^{\prime}\in T_{f,\xi}}%
\rho_{\tau^{\prime}}\left(  f\right)  ,
\]
where $\rho_{\tau^{\prime}}\left(  f\right)  $ is the $\rho$ rank of $f$ in
the $\tau^{\prime}$ topology.

\section{$\beta_{\xi}^{\ast}$ and $\gamma_{\xi}^{\ast}$ are essentially
equivalent}

Let $X$ be a Polish space and let $\mathfrak{B}_{\xi}(X)$ be the space of (not
necessarily bounded) Baire class $\xi$ functions on $X.$ An (\emph{ordinal})
\emph{rank} on $\mathfrak{B}_{\xi}(X)$ is a function $\rho$ from
$\mathfrak{B}_{\xi}(X)$ into the set of ordinals. Two ranks $\rho_{1}$ and
$\rho_{2}$ on ${\mathfrak{B}}_{\xi}(X)$ are said to be \emph{essentially
equivalent} if for any $f\in{\mathfrak{B}}_{\xi}(X)$ and any countable ordinal
$\zeta$, $\rho_{1}(f)\leq\omega^{\zeta}$ if and only $\rho_{2}(f)\leq
\omega^{\zeta}$. Question 5.8 (which is the same as Question 8.6) in
\cite{EKV} asks if the ranks $\beta_{\xi}^{\ast}$ and $\gamma_{\xi}^{\ast}$
are essentially equivalent. The main result of this section is an affirmative
answer to this question. First we consider the case $\xi=1$. In this case, the
question was stated separately in \cite{EKV} as Question 3.41. One half of the
result was already obtained in \cite{EKV}.

\begin{prop}
\label{p2.1} \cite[Theorem 3.24]{EKV} If $f$ is a Baire class $1$ function,
then $\beta(f) \leq\gamma(f)$.
\end{prop}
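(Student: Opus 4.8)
The plan is to show that for a Baire class $1$ function $f$ and any sequence $(f_n)$ of continuous functions converging pointwise to $f$, one has $\beta(f,\varepsilon)\leq\gamma((f_n),\varepsilon/2)$ for every $\varepsilon>0$; taking the infimum over all such sequences and then the supremum over $\varepsilon>0$ yields $\beta(f)\leq\gamma(f)$. The key is a comparison of the two derivations by transfinite induction on the ordinal $\eta$: I will prove that for every closed set $F$,
\[
D^{\eta}(f,\varepsilon,F)\subseteq D^{\eta}((f_n),\varepsilon/2,F).
\]

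For the successor step, the heart of the matter is the inclusion $D^1(f,\varepsilon,F)\subseteq D^1((f_n),\varepsilon/2,F)$ for an arbitrary closed $F$. So suppose $x\in D^1(f,\varepsilon,F)$; fix a neighborhood $U$ of $x$ and $m\in\N$. By definition there are $x_1,x_2\in F\cap U$ with $|f(x_1)-f(x_2)|\geq\varepsilon$. Since $f_n(x_i)\to f(x_i)$ for $i=1,2$, choose $n$ large (in particular $n>m$) so that $|f_n(x_i)-f(x_i)|<\varepsilon/4$ for $i=1,2$; then $|f_n(x_1)-f_n(x_2)|>\varepsilon/2$. Now I would like to produce a single point $x'\in U\cap F$ and indices $n_1>n_2>m$ with $|f_{n_1}(x')-f_{n_2}(x')|\geq\varepsilon/2$. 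The natural move is to use continuity: since $f_n$ is continuous at $x_1$ and $x_2$ converges to $f(x_1)$, $f(x_2)$, one can instead look near the point $x$ itself. More carefully, pick $n_2>m$ with $|f_{n_2}(x)-f(x)|$ small relative to $\varepsilon$; since the $f_n(x)$ converge to $f(x)$ but $f$ oscillates by at least $\varepsilon$ arbitrarily near $x$, there is a point $y\in F\cap U$ with $|f(y)-f_{n_2}(x)|$ close to at least $\varepsilon/2$ in absolute value — concretely $|f(y)-f(x)|\geq\varepsilon/2$ for a suitable choice among $x_1,x_2$ — and then an index $n_1>n_2$ with $|f_{n_1}(y)-f(y)|$ small; using continuity of $f_{n_2}$ at $x$ to replace $f_{n_2}(x)$ by $f_{n_2}(y)$ on a small enough neighborhood, one gets $|f_{n_1}(y)-f_{n_2}(y)|\geq\varepsilon/2$ with $y\in U\cap F$. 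This places $x$ in $D^1((f_n),\varepsilon/2,F)$. Iterating this inclusion and intersecting at limits gives the inductive claim for all $\eta$.

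Given the claim, if $\gamma((f_n),\varepsilon/2)=\eta$ then $D^{\eta}((f_n),\varepsilon/2,X)=\emptyset$, hence $D^{\eta}(f,\varepsilon,X)=\emptyset$, so $\beta(f,\varepsilon)\leq\eta\leq\gamma((f_n),\varepsilon/2)\leq\gamma((f_n))$. Taking the supremum over $\varepsilon$ gives $\beta(f)\leq\gamma((f_n))$, and the infimum over all continuous sequences $(f_n)$ converging pointwise to $f$ yields $\beta(f)\leq\gamma(f)$.

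The main obstacle is the careful bookkeeping in the successor step: one must convert the two-point oscillation condition defining $D^1(f,\varepsilon,\cdot)$ (two points $x_1,x_2$, one function value $f$) into the single-point, two-index condition defining $D^1((f_n),\varepsilon/2,\cdot)$ (one point $x'$, two indices $n_1,n_2$). The device is to route everything through a fixed point $x$ at which the $f_n$ converge, using continuity of the (finitely many relevant) $f_n$ to shrink neighborhoods, and to pay for the passage with the factor of $2$ in $\varepsilon$. Since the excerpt attributes this to \cite[Theorem 3.24]{EKV}, I expect the published argument to follow essentially this route, and the remaining details are routine $\varepsilon$-estimates.
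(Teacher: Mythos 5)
You should note first that the paper does not prove this proposition at all: it is quoted verbatim from \cite[Theorem 3.24]{EKV}, so there is no in-paper argument to compare with. Judged on its own, your strategy is the right one and is essentially the standard Kechris--Louveau/EKV argument: compare the two derivations level by level, prove an inclusion $D^{\eta}(f,\varepsilon,F)\subseteq D^{\eta}((f_n),\varepsilon',F)$ for every closed $F$ by transfinite induction (the successor step reduces to the case $\eta=1$ applied to an arbitrary closed set, plus monotonicity of $D^{1}((f_n),\varepsilon',\cdot)$ in the set variable, a point you gloss over with ``iterating'' but which is routine), and then pass to ranks, the supremum over $\varepsilon$ and the infimum over sequences. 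The one real flaw is the constant: your successor-step estimates do not deliver the inclusion with $\varepsilon'=\varepsilon/2$ exactly. From $|f(x_1)-f(x_2)|\geq\varepsilon$ you get some $y\in\{x_1,x_2\}$ with $|f(y)-f(x)|\geq\varepsilon/2$, and then $|f_{n_1}(y)-f_{n_2}(y)|\geq \varepsilon/2-|f_{n_1}(y)-f(y)|-|f_{n_2}(y)-f_{n_2}(x)|-|f_{n_2}(x)-f(x)|$; the three error terms can be made small but not zero, and they must be fixed before you know which witness $y$ you get, so you only obtain $\geq\varepsilon/2-3\delta$ for each $\delta>0$, not $\geq\varepsilon/2$. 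The fix is trivial and does not change the structure: prove $D^{1}(f,\varepsilon,P)\subseteq D^{1}((f_n),\varepsilon',P)$ for every $\varepsilon'<\varepsilon/2$ (or simply with $\varepsilon/3$), either by your direct route or, more cleanly, by the contrapositive (if all oscillations $|f_{n_1}(x')-f_{n_2}(x')|$ with $n_1>n_2>m$ are $<\varepsilon'$ on $U\cap P$, let $n_1\to\infty$ to get $|f-f_{m+1}|\leq\varepsilon'$ on $U\cap P$ and use continuity of $f_{m+1}$ at $x$ to bound the oscillation of $f$ near $x$ by $2\varepsilon'+\delta<\varepsilon$). Since $\beta(f)=\sup_{\varepsilon>0}\beta(f,\varepsilon)$ and $\gamma((f_n))=\sup_{\varepsilon'>0}\gamma((f_n),\varepsilon')$, the weakened constant still gives $\beta(f)\leq\gamma((f_n))$ for every admissible sequence, hence $\beta(f)\leq\gamma(f)$, so your proposal is correct after this bookkeeping repair.
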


The next lemma comes from \cite[Lemma 3.1]{LT}. We include the proof for the
reader's convenience.

\begin{lem}
\label{5}Suppose that $F$ is a closed subset of $X$ and that $f$ is a Baire
class $1$ function on $X$. For any $\varepsilon>0$, there exists a continuous
function $g:F\setminus D^{1}(f,\varepsilon,F)\rightarrow\mathbb{R}$ such that
$|g(x)-f(x)|<\varepsilon$ for all $x\in F\backslash D^{1}(f,\varepsilon,F)$.
\end{lem}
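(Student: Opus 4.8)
The plan is to build $g$ on $F \setminus D^1(f,\varepsilon,F)$ by patching together locally constant approximations coming from a cover of $F \setminus D^1(f,\varepsilon,F)$ by relatively open sets of small oscillation. The starting observation is that by definition of $D^1(f,\varepsilon,F)$, each point $x \in F \setminus D^1(f,\varepsilon,F)$ has a neighborhood $U_x$ in $X$ such that $|f(y)-f(z)| < \varepsilon$ for all $y,z \in F \cap U_x$; in other words, $f$ restricted to $F \cap U_x$ has oscillation strictly less than $\varepsilon$. Shrinking slightly, I may take the $U_x$ to be open balls, and then $V_x := F \cap U_x$ is a relatively open cover of the (metrizable, hence paracompact) space $F \setminus D^1(f,\varepsilon,F)$.

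Next I would pass to a locally finite partition of unity. Since $F \setminus D^1(f,\varepsilon,F)$ is a metric space, the open cover $\{V_x \cap (F \setminus D^1(f,\varepsilon,F))\}$ admits a subordinate locally finite partition of unity $(\varphi_i)_{i\in I}$, with each $\varphi_i$ continuous on $F \setminus D^1(f,\varepsilon,F)$ and $\supp \varphi_i \subseteq V_{x_i}$ for a suitable choice of $x_i$. For each $i$ pick a point $p_i \in V_{x_i} \cap (F \setminus D^1(f,\varepsilon,F))$ and set $g = \sum_{i\in I} f(p_i)\,\varphi_i$. This is continuous on $F \setminus D^1(f,\varepsilon,F)$ by local finiteness. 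To estimate $|g(x)-f(x)|$: fix $x$, and note that $\sum_i \varphi_i(x) = 1$, so $g(x)-f(x) = \sum_i (f(p_i)-f(x))\varphi_i(x)$, where the sum runs only over those $i$ with $\varphi_i(x) \neq 0$. For such $i$ we have $x \in \supp \varphi_i \subseteq V_{x_i}$ and also $p_i \in V_{x_i}$, and since $f$ has oscillation $<\varepsilon$ on $V_{x_i}$, we get $|f(p_i)-f(x)| < \varepsilon$. Hence $|g(x)-f(x)| \le \sum_i |f(p_i)-f(x)|\varphi_i(x) < \varepsilon \sum_i \varphi_i(x) = \varepsilon$, as required.

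The only subtle point — and the one I would be most careful about — is making sure the oscillation bound is genuinely strict on each $V_{x_i}$ (so that the final convex-combination estimate stays strictly below $\varepsilon$), and that the supports of the $\varphi_i$, which may be larger than the $V_{x_i}$ a priori, are still contained in sets of oscillation $<\varepsilon$. This is handled by shrinking at the start: replace the original neighborhoods by slightly smaller open balls $U'_x \subseteq U_x$ with $x \in U'_x$, use the cover $\{V'_x\}$ with $V'_x = F \cap U'_x$ to build the partition of unity, and note $\ol{V'_x} \cap (F \setminus D^1(f,\varepsilon,F)) \subseteq F \cap U_x$ still has oscillation $<\varepsilon$; since $\supp\varphi_i$ is contained in the closure of the corresponding $V'_{x_i}$, the estimate goes through. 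Everything else is routine point-set topology (paracompactness of metric spaces, existence of subordinate partitions of unity), so no further obstacle is expected.
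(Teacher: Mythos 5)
Your proposal is correct and follows essentially the same route as the paper's proof: cover $F\setminus D^{1}(f,\varepsilon,F)$ by relatively open sets on which $f$ has oscillation $<\varepsilon$, take a subordinate continuous partition of unity on this paracompact (metric) space, and define $g$ as the resulting convex combination of values of $f$, with the same strict estimate at each point. The extra shrinking step you add to control supports is harmless but unnecessary once the partition of unity is taken subordinate to the cover, which is exactly how the paper argues.
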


\begin{proof}
If $x\in F\setminus D^{1}(f,\varepsilon,F),$ there exists an open neighborhood
$U_{x}$ of $x$ in $F$ such that $U_{x}\cap D^{1}( f,\varepsilon,F) =\emptyset$
and that $\vert f( x_{1}) -f( x_{2}) \vert<\varepsilon\text{ for all }%
x_{1},\,x_{2}\in U_{x}.$ The collection
\[
\mathcal{U}=\{ U_{x}:x\in F\setminus D^{1}( f,\varepsilon,F) \}
\]
is an open cover of the paracompact space $F\setminus D^{1}( f,\varepsilon,F)
$. By \cite{D}, Theorems IX.5.3 and VIII.4.2, there exists a (continuous)
partition of unity $( \varphi_{U}) _{U\in\mathcal{U}}$ subordinated to
$\mathcal{U}.$ If $U=U_{x}\in\mathcal{U}$ for some $x\in F\setminus D^{1}(
f,\varepsilon,F) $, let $a_{U}=f(x)$. Define $g$ on $F\setminus D^{1}(
f,\varepsilon,F)$ by $g=\sum_{U\in\mathcal{U}}a_{U}\varphi_{U}.$ $g$ is a
well-defined continuous function on $F\backslash D^{1}(f,\varepsilon,F)$ since
$\{ \text{supp\thinspace}\varphi_{U}:U\in\mathcal{U}\} $ is locally finite.
Let $x\in F\setminus D^{1}( f,\varepsilon,F) .$ Then $\mathcal{V}=\{
U\in\mathcal{U}:\varphi_{U}( x) \neq0\} $ is a finite set, $\varphi_{U}( x)
>0$ for all $U\in\mathcal{V}$ and $\sum_{U\in\mathcal{V}}\varphi_{U}( x) =1.$
If $U\in\mathcal{V}$, let $U=U_{y}$ for some $y\in F\setminus D^{1}(
f,\varepsilon,F) .$ Since $x,y\in U_{y}$, $\vert a_{U}-f( x) \vert=\vert f( y)
-f( x) \vert<\varepsilon.$ It follows that
\begin{align*}
\vert g( x) -f( x) \vert &  =\vert\sum_{U\in\mathcal{U}}a_{U}\varphi_{U}( x)
-f( x) \vert=\vert\sum_{U\in\mathcal{V}}a_{U}\varphi_{U}( x) -\sum
_{U\in\mathcal{V}}f( x) \varphi_{U}( x) \vert\\
&  \leq\sum_{U\in\mathcal{V}}\vert a_{U}-f( x) \vert\varphi_{U}( x)
<\varepsilon.
\end{align*}

\end{proof}

In the proof of the next proposition, we fix a particular metric $d$ that
generates the topology on $X$.

\begin{prop}
\label{p2.3} Let $f\in\mathfrak{B}_{1}(X)$ with $\beta( f) \leq\beta_{0}.$ For
all $\varepsilon>0,$ there exists $g\in\mathfrak{B}_{1}(X)$ such that
$\gamma(g) \leq\beta_{0}$ and that $|f(x)-g(x)| <\varepsilon$ for all $x\in X$.
\end{prop}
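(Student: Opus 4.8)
The plan is to build $g$ by "freezing" $f$ on the successive $\varepsilon$-oscillation differences $D^{\eta}(f,\varepsilon,X)\setminus D^{\eta+1}(f,\varepsilon,X)$, replacing $f$ there by a continuous approximant from Lemma~\ref{5}, and then exhibit $g$ as a pointwise limit of a specific sequence $(g_n)$ of continuous functions whose convergence rank is controlled by $\beta_0$. Concretely, set $F_\eta=D^{\eta}(f,\varepsilon,X)$ for $\eta\le\beta_0$ (so $F_{\beta_0}=\emptyset$ since $\beta(f,\varepsilon)\le\beta(f)\le\beta_0$). For each $\eta<\beta_0$ apply Lemma~\ref{5} to the closed set $F_\eta$ to get a continuous $h_\eta\colon F_\eta\setminus F_{\eta+1}\to\R$ with $|h_\eta-f|<\varepsilon$ on that layer. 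The sets $F_\eta\setminus F_{\eta+1}$ partition $X$, and I define $g$ on $X$ by $g=h_\eta$ on $F_\eta\setminus F_{\eta+1}$; then automatically $|g(x)-f(x)|<\varepsilon$ for all $x$. The content is to show $g\in\mathfrak B_1(X)$ and, more precisely, that $\gamma(g)\le\beta_0$.

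For the Baire class $1$ (and rank) statement I would produce continuous functions $g_n$ converging pointwise to $g$ whose convergence derivation stabilises in at most $\beta_0$ steps. The natural device: for each $\eta<\beta_0$, the function $h_\eta$ is continuous on the relatively open subset $F_\eta\setminus F_{\eta+1}$ of the closed set $F_\eta$; using the metric $d$ (fixed just before the proposition) and a Tietze/partition-of-unity extension, extend $h_\eta$ to a continuous function on a shrinking neighborhood and interpolate across the "boundary at level $\eta$", where one moves from $F_\eta$ into $F_\eta\setminus F_{\eta+1}$, by a partition of unity driven by the distance to $F_{\eta+1}$. Iterating this downward through the (countable) ordinals $\eta<\beta_0$ and using a single scanning sequence indexed by $n\in\N$ to realize all these interpolations, one gets $g_n\to g$ pointwise. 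The key point is that the only places where $(g_n)$ fails to be "locally eventually constant up to $\varepsilon'$" are exactly the higher-level derived sets, so the derivation $D^1((g_n),\varepsilon',\cdot)$ drops one level of $F_\eta$ each time it is applied, giving $\gamma((g_n),\varepsilon')\le\beta_0$ for every $\varepsilon'>0$, hence $\gamma(g)\le\gamma((g_n))\le\beta_0$.

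The main obstacle is the bookkeeping at limit ordinals $\eta<\beta_0$: one must arrange the neighborhoods and the interpolation scales so that, at a limit level, the approximating functions $g_n$ have already "settled" below every earlier level before they are allowed to vary at level $\eta$, and so that the resulting sequence is genuinely a sequence indexed by $\N$ (not by ordinals). I expect to handle this by choosing, for each $\eta$, an open set $V_\eta\supseteq F_{\eta+1}$ with $\ol{V_\eta}\cap(F_\eta\setminus F_{\eta+1})$ pushed $d$-distance $\to 0$ into $F_{\eta+1}$, and defining $g_n$ to agree with the already-constructed continuous extension of $h_\eta$ outside $V_\eta$ while transitioning to the level-$(\eta+1)$ data inside; a diagonal/interleaving over $\eta$ then yields the single sequence. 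Once the sequence is in hand, verifying $D^1((g_n),\varepsilon',F_\eta)\subseteq F_{\eta+1}$ is a local computation of the same flavour as Lemma~\ref{5}, and transfinite induction closes the estimate. Finally, by Proposition~\ref{p2.1} one has $\beta(g)\le\gamma(g)\le\beta_0$ as well, though only the bound $\gamma(g)\le\beta_0$ is asserted in the statement.
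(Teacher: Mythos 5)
Your first half coincides with the paper's: define $g$ layer by layer via Lemma \ref{5} applied to the derived sets $D^{\eta}(f,\varepsilon,X)$, so that $|f-g|<\varepsilon$ on all of $X$ is automatic. The genuine gap is in the second half, which is where the entire content of the proposition lies: one must actually exhibit a sequence of continuous functions on $X$ converging pointwise to $g$ whose convergence derivation is controlled by the oscillation derivation of $f$, and this is precisely the part you leave as a plan (``I expect to handle this by\dots''). Your scheme --- extend each $h_\eta$ to a neighborhood, interpolate across the boundary at level $\eta$ using the distance to $F_{\eta+1}$, then diagonalize over the countably many ordinals $\eta<\beta_0$ --- breaks down exactly at the point you flag: at a limit level $\eta$, points of $F_\eta\setminus F_{\eta+1}$ are approached from cofinally many earlier layers, and no enumeration of the ordinals below $\beta_0$ into a single ${\mathbb N}$-indexed scan by itself makes the interpolation regions coherent there, nor does it show that the resulting functions are continuous on $X$ and converge to $g$. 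Moreover, the rank estimate you need, namely $D^{\alpha}((g_n),\varepsilon',X)\subseteq D^{\alpha}(f,\varepsilon,X)$ for all $\alpha$, is asserted rather than derived, and it depends entirely on structural features of the sequence you have not constructed (for instance, that near a point surviving to level $\alpha$ all $g_m$ with $m$ large take the common value $g$ on a suitable closed set, which is what produces the contradiction in the successor step).

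The missing device --- and it is how the paper closes exactly this gap --- is a uniform, non-transfinite definition of closed sets on which $g$ is already continuous: with $U_n^{\alpha}$ the $\frac{1}{n}$-neighborhood of $D^{\alpha}(f,\varepsilon,X)$, set $Y_n=\bigcap_{\alpha<\beta_0}[(U_n^{\alpha})^{c}\cup D^{\alpha}(f,\varepsilon,X)]$. Each $Y_n$ is closed, $Y_n\subseteq Y_{n+1}$, every $x\in X$ belongs to $Y_n$ for all large $n$, and if $x\in Y_n$ has exact level $\alpha_0$ then all points of $Y_n$ near $x$ lie in the same layer $D^{\alpha_0}\setminus D^{\alpha_0+1}$, so $g_{|Y_n}$ is continuous. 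One then takes $h_n$ to be any continuous (Tietze) extension of $g_{|Y_n}$ to $X$; pointwise convergence of $(h_n)$ to $g$ is immediate, and the inclusion $D^{\alpha}((h_n),\eta,X)\subseteq D^{\alpha}(f,\varepsilon,X)$ is proved by induction on $\alpha$, the successor step using that any witness point can be placed inside $Y_{2N}$, where $h_m=h_n=g$. Once these $Y_n$ are in hand no transfinite interpolation or partition-of-unity glueing across levels is needed; without them, or an equivalent device, your argument remains a programme rather than a proof.
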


\begin{proof}
Let $\varepsilon>0$ be given. For all $\alpha<\beta_{0},$ apply Lemma \ref{5}
to $F=D^{\alpha}( f,\varepsilon,X)$ to obtain a continuous function
\[
g_{\alpha}:D^{\alpha}( f,\varepsilon,X) \setminus D^{\alpha+1}(f,\varepsilon
,X) \to\mathbb{R}%
\]
such that $|g_{\alpha}(x) -f(x)| < \varepsilon$ for all $x\in D^{\alpha}(
f,\varepsilon,X) \setminus D^{\alpha+1}(f,\varepsilon,X)$. Let $g=\cup
_{\alpha< \beta_{0}}g_{\alpha}.$ Since $D^{\beta_{0}}(f,\varepsilon,X) =
\emptyset$, $g$ is defined on all of $X$. For any $n\in\mathbb{N}$ and
$\alpha<\beta_{0}$, let $U_{n}^{\alpha}$ be the $\frac{1}{n}$-neighborhood
(with respect to the metric $d$) of $D^{\alpha}( f,\varepsilon,X) $. Consider
the set
\[
Y_{n}=\cap_{\alpha<\beta_{0}}[( U_{n}^{\alpha}) ^{c}\cup D^{\alpha}(
f,\varepsilon,X)].
\]
Clearly $Y_{n}$ is closed and $Y_{n}\subseteq Y_{n+1}.$ We claim that $g$ is
continuous on $Y_{n}.$ Let $x\in Y_{n}$ and let $( x_{k}) $ be a sequence in
$Y_{n}$ converging to $x.$ We wish to show that $(g(x_{k}))$ converges to
$g(x)$. Since $D^{\beta_{0}}(f,\varepsilon,X) = \emptyset$, there exists
$\alpha_{0} < \beta_{0}$ such that $x\in D^{\alpha_{0}}( f,\varepsilon,X)
\setminus D^{\alpha_{0}+1}( f,\varepsilon,X).$ If $x_{k} \notin D^{\alpha_{0}%
}( f,\varepsilon,X)$, then $x_{k} \in( U_{n}^{\alpha_{0}}) ^{c}$ by definition
of $Y_{n}$. As $x$ lies outside of the closed set $( U_{n}^{\alpha_{0}}) ^{c}%
$, this is only possible for finitely many $k$. Without loss of generality, we
may assume that $x_{k} \in D^{\alpha_{0}}(f,\varepsilon,X)$ for all $k$.
Similarly, since $x$ lies outside of the closed set $D^{\alpha_{0}+1}(
f,\varepsilon,X)$, we may also assume that $x_{k} \notin D^{\alpha_{0}+1}(
f,\varepsilon,X)$ for all $k$. Therefore, $x, x_{k} \in W =D^{\alpha_{0}}(
f,\varepsilon,X)\backslash D^{\alpha_{0}+1}( f,\varepsilon,X)$ for all $k$.
Since $g = g_{\alpha_{0}}$ on $W$, and $g_{\alpha_{0}}$ is continuous on $W$,
it follows that $(g(x_{k}))$ converges to $g(x)$, as claimed. This shows that
$g_{|Y_{n}}$ is a continuous function on $Y_{n}$ for each $n$.

Extend $g_{|Y_{n}}$ to a continuous function $h_{n}$ on $X.$ We claim that
$(h_{n})$ converges to $g$ pointwise. Given $x\in X,$ choose $\alpha<
\beta_{0}$ so that $x\in D^{\alpha}( f,\varepsilon,X) \setminus D^{\alpha+1}(
f,\varepsilon,X)$. Let $n_{x}$ be an integer so that
\[
d( x,D^{\alpha+1}( f,\varepsilon,X) ) >\frac{1}{n_{x}}.
\]
In particular, $x\in(U^{\alpha+1}_{n_{x}})^{c}$. Consider any $n\geq n_{x}$.
If $\alpha^{\prime}>\alpha$, then $D^{\alpha^{\prime}}(f,\varepsilon,X)
\subseteq D^{\alpha+1}(f,\varepsilon,X)$ and hence $U^{\alpha^{\prime}}_{n}
\subseteq U^{\alpha+1}_{n}\subseteq U^{\alpha+1}_{n_{x}}$. Thus $x\in
(U^{\alpha^{\prime}}_{n})^{c}$. On the other hand, if $\alpha^{\prime\prime
}\leq\alpha$, then $x\in D^{\alpha}( f,\varepsilon,X) \subseteq D^{\alpha
^{\prime\prime}}(f,\varepsilon,X)$. Therefore, $x\in Y_{n}$ and hence $h_{n}(
x) =g( x) $ for all $n>n_{x}.$ Evidently, $(h_{n}) $ converges pointwise to
$g.$

Finally, let us show that for any $\eta>0,$ $D^{\alpha}( ( h_{n}) ,\eta,X)
\subseteq D^{\alpha}( f,\varepsilon,X) .$ Since $\beta(f)\leq\beta_{0}$, it
would follow that $\gamma(g) \leq\gamma((h_{n}))\leq\beta_{0}$. The proof is
by induction on $\alpha$. A moment's reflection shows that it is sufficient to
establish the claim for successor ordinals. Assume that the claim holds for
some $\alpha$. Suppose, if possible, that there exists $x\in D^{\alpha+1}( (
h_{n}) ,\eta,X) \setminus D^{\alpha+1}( f,\varepsilon,X) .$ Let $N\in
\mathbb{N}$ be such that $x\notin U_{N}^{\alpha+1}.$ By definition of
$D^{\alpha+1}( ( h_{n}) ,\eta,X)$, there exist $y \in D^{\alpha}((h_{n}%
),\eta,X)$, $d(y,x) < \frac{1}{2N}$, and $m,n > 2N$ so that $|h_{m}(y)
-h_{n}(y)| \geq\eta$. By the inductive hypothesis, $y \in D^{\alpha
}(f,\varepsilon, X)$. Since $d(x,D^{\alpha+1}(f,\varepsilon,X)) > \frac{1}{N}$
and $d(y,x)< \frac{1}{2N}$, $y \notin U^{\alpha+1}_{2N}$. Thus, if
$\alpha^{\prime}\leq\alpha$, then $y \in D^{\alpha}(f,\varepsilon,X) \subseteq
D^{\alpha^{\prime}}(f,\varepsilon,X)$; while if $\alpha^{\prime\prime}>
\alpha$, then $y \in(U^{\alpha+1}_{2N})^{c}\subseteq(U^{\alpha^{\prime\prime}%
}_{2N})^{c}$. This proves that $y \in Y_{2N}$. By definition,
\[
\vert h_{m}(y) -h_{n}( y) \vert=\vert g( y) -g( y) \vert=0<\eta,
\]
contrary to the choices of $m,n$ and $y$.
\end{proof}

Given two ordinals $\xi_{1}$ and $\xi_{2}$, we write $\xi_{1} \lesssim\xi_{2}$
if $\xi_{2}\leq\omega^{\eta}$ implies $\xi_{1}\leq\omega^{\eta}$. If both
$\xi_{1} \lesssim\xi_{2}$ and $\xi_{2}\lesssim\xi_{1}$ hold, then we write
$\xi_{1} \approx\xi_{2}$.

\begin{prop}
\cite[Proposition 3.34]{EKV}\label{6} If the sequence $(f_{n})$ of Baire class
1 functions converges uniformly to $f,$ then $\gamma(f)\lesssim\sup
\{\gamma(f_{n})\}$.
\end{prop}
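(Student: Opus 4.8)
The plan is to reduce everything to the following claim: \emph{if $\eta$ is a countable ordinal and $\gamma(f_n)\le\omega^\eta$ for every $n$, then $\gamma(f)\le\omega^\eta$}. Passing to a subsequence we may assume $\|f-f_n\|_\infty<2^{-n}$, so $\|f_{n+1}-f_n\|_\infty<2^{-n+1}$. For each $n$ the set $\{\gamma((\varphi_m)_m):\varphi_m\ \text{continuous},\ \varphi_m\to f_n\ \text{pointwise}\}$ is a nonempty set of ordinals, so its infimum is attained; fix a sequence $(\tilde g^{\,n}_m)_m$ of continuous functions converging pointwise to $f_n$ with $\gamma((\tilde g^{\,n}_m)_m)\le\omega^\eta$. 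The task is to fabricate from the array $(\tilde g^{\,n}_m)$ a single sequence $(h_m)_m$ of continuous functions with $h_m\to f$ pointwise and $\gamma((h_m)_m)\le\omega^\eta$; then $\gamma(f)\le\gamma((h_m)_m)\le\omega^\eta$.

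A crude diagonal $h_m:=\tilde g^{\,n(m)}_m$ is hopeless: convergence of $h_m(x)$ to $f(x)$ would force $m$ to exceed the time at which $\tilde g^{\,n(m)}_{\,\cdot}$ has come close to $f_{n(m)}$ at $x$, a point-dependent and generally unbounded quantity that no fixed schedule $n(m)$ can outrun uniformly in $x$. Instead I would coordinate the rows \emph{vertically}: set $g^1_m:=\tilde g^1_m$ and $g^{n+1}_m:=\operatorname{med}\!\bigl(g^n_m-2^{-n+1},\ \tilde g^{\,n+1}_m,\ g^n_m+2^{-n+1}\bigr)$. Each $g^n_m$ is continuous, and since the median is $1$-Lipschitz in every argument, $\|g^{n+1}_m-g^n_m\|_\infty\le 2^{-n+1}$; hence for each fixed $m$ the sequence $(g^n_m)_n$ is uniformly Cauchy, $h_m:=\lim_n g^n_m$ is a well-defined continuous function, and $\|h_m-g^n_m\|_\infty\le 2^{-n+2}$. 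Because $\|f_{n+1}-f_n\|_\infty<2^{-n+1}$ with room to spare, one checks that at any fixed point the truncation defining $g^{n+1}_m$ is inactive once $m$ is large; inductively this gives $g^n_m\to f_n$ pointwise for every $n$, and then $|h_m(x)-f(x)|\le\|h_m-g^n_m\|_\infty+|g^n_m(x)-f_n(x)|+\|f_n-f\|_\infty$ shows $h_m\to f$ pointwise. For the rank, fix $\delta>0$ and choose $N$ with $2^{-N+3}<\delta/2$; then $\|h_m-g^N_m\|_\infty<\delta/4$, so on any closed $F$ the relation $|h_{m_1}-h_{m_2}|\ge\delta$ forces $|g^N_{m_1}-g^N_{m_2}|\ge\delta/2$, whence $D^1((h_m)_m,\delta,F)\subseteq D^1((g^N_m)_m,\delta/2,F)$ and, by transfinite induction (the limit and successor steps being routine from monotonicity of the derivation), $D^\alpha((h_m)_m,\delta,X)\subseteq D^\alpha((g^N_m)_m,\delta/2,X)$ for all $\alpha$. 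Thus once we know $\gamma((g^N_m)_m)\le\omega^\eta$ we get $D^{\omega^\eta}((h_m)_m,\delta,X)=\emptyset$ for every $\delta>0$, i.e.\ $\gamma((h_m)_m)\le\omega^\eta$, as desired.

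The main obstacle is exactly that last point: keeping $\gamma((g^n_m)_m)\le\omega^\eta$ under the vertical coordination, since $g^n_m$ is an iterated median of $\tilde g^1_m,\dots,\tilde g^n_m$ and a careless union bound over the rows would inflate the rank to $\omega^\eta\!\cdot\!n$. Two observations should tame this. First, replacing a sequence by its median against fixed constants — or against a sequence of strictly smaller oscillation — does not enlarge any derived set $D^\alpha(\,\cdot\,,\delta,\cdot)$, again because $t\mapsto\operatorname{med}(a,t,b)$ is $1$-Lipschitz; so the only potentially harmful contribution at stage $n+1$ is the "correction'' between $g^n_m$ and $g^{n+1}_m$, of sup-norm at most $2^{-n+1}$. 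Second, a term of sup-norm $2^{-n+1}$ has oscillation at most $2^{-n+2}$ and therefore contributes nothing to $D^1(\,\cdot\,,\delta,\cdot)$ whenever $\delta>2^{-n+2}$ — which dovetails with the fact that the row $(g^N_m)_m$ is only ever consulted at a scale $\delta$ with $\delta\gg 2^{-N}$. Choosing the truncation levels at stage $n$ carefully in terms of $\|f_{n+1}-f_n\|_\infty$ and of the scales that stage $n$ must service, so that the iterated medians producing $g^N$ never raise the level of a relevant derived set above $\omega^\eta$, is where the real work lies; the derived-set comparisons of the previous paragraph are then bookkeeping.
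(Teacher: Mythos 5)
Your reduction is fine as far as it goes: passing to a subsequence with $\|f-f_n\|_\infty<2^{-n}$, taking optimal pointwise approximating sequences $(\tilde g^{\,n}_m)_m$ for each $f_n$ (the infimum in the definition of $\gamma(f_n)$ is indeed attained), coordinating the rows by the clamp/median so that $h_m=\lim_n g^n_m$ is continuous and converges pointwise to $f$, and the comparison $D^\alpha((h_m),\delta,X)\subseteq D^\alpha((g^N_m),\delta/2,X)$ once $2^{-N+3}<\delta/2$ — all of this is correct. But the decisive step, namely that $D^{\omega^\eta}((g^N_m),\delta/2,X)=\emptyset$, is precisely what you do not prove, and your two "observations" do not yield it. The medians are taken against the $m$-dependent functions $g^n_m\pm 2^{-n+1}$, not against fixed constants, so the assertion that this "does not enlarge any derived set" is unjustified; and the corrections coming from the early stages $n$ with $2^{-n+2}\ge\delta$ cannot be dismissed by smallness, since they are large at exactly the scale $\delta$ at which the row $(g^N_m)_m$ is consulted. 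Consequently the rank of $(g^N_m)_m$ genuinely mixes the ranks of all rows $1,\dots,N$, and no choice of truncation levels in the construction removes this; the gap is not a matter of bookkeeping.

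The missing step can be filled, but by a different mechanism than the one you gesture at. Writing $g^{n+1}_m=g^n_m+\phi_n(\tilde g^{\,n+1}_m-g^n_m)$ with $\phi_n$ the $1$-Lipschitz clamp at level $2^{-n+1}$, one gets, for fixed $\delta$ and every closed $P$, an inclusion of the form $D^1((g^N_m),\delta/2,P)\subseteq\bigcup_{n\le N}D^1((\tilde g^{\,n}_m),\varepsilon,P)$ for some $\varepsilon=\varepsilon(N,\delta)>0$ (oscillation in the index $m$ propagates through the clamps with a factor at most $2$ per stage, and there are only $N$ stages). Each map $P\mapsto D^1((\tilde g^{\,n}_m),\varepsilon,P)$ is a regular derivation whose $\omega^\eta$-th iterate of $X$ is empty, so the finite-union analogue of Proposition \ref{p0} (the Kechris--Louveau pigeonhole argument) gives $D^{\omega^\eta}((g^N_m),\delta/2,X)\subseteq\bigcup_{n\le N}D^{\omega^\eta}((\tilde g^{\,n}_m),\varepsilon,X)=\emptyset$. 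Without some such union-of-regular-derivations lemma, the "$\omega^\eta\cdot n$ inflation" you worry about is exactly what a naive estimate produces, so as written your argument is incomplete at its central point. (For comparison: the paper offers no proof of this proposition at all — it is quoted from \cite{EKV} — so the relevant question is only whether your outline closes, and at present it does not.)
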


The next theorem solves Question 3.41 in \cite{EKV} in the affirmative.

\begin{thm}
\label{7} The ranks $\beta$ and $\gamma$ are essentially equivalent. That is
$\beta(f) \approx\gamma(f)$ for any $f\in{\mathfrak{B}}_{1}(X)$.
\end{thm}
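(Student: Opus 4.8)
The plan is to prove the two inequalities $\beta(f)\lesssim\gamma(f)$ and $\gamma(f)\lesssim\beta(f)$ separately. The first direction is essentially free: Proposition~\ref{p2.1} already gives $\beta(f)\le\gamma(f)$, which is stronger than $\beta(f)\lesssim\gamma(f)$. So the entire content of the theorem lies in establishing $\gamma(f)\lesssim\beta(f)$, i.e. showing that if $\beta(f)\le\omega^\eta$ for some countable ordinal $\eta$, then $\gamma(f)\le\omega^\eta$ as well.

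For the hard direction I would argue as follows. Suppose $\beta(f)\le\omega^\eta$; set $\beta_0=\beta(f)$, so $\beta_0\le\omega^\eta$. For each $\varepsilon>0$, apply Proposition~\ref{p2.3} (with this $\beta_0$) to produce $g_\varepsilon\in\mathfrak{B}_1(X)$ with $\gamma(g_\varepsilon)\le\beta_0\le\omega^\eta$ and $\|f-g_\varepsilon\|_\infty<\varepsilon$. Now take a sequence $\varepsilon_n\downarrow 0$, for instance $\varepsilon_n=1/n$, and set $f_n=g_{1/n}$. Then $(f_n)$ is a sequence of Baire class $1$ functions converging uniformly to $f$, and $\sup_n\gamma(f_n)\le\omega^\eta$. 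By Proposition~\ref{6}, $\gamma(f)\lesssim\sup_n\gamma(f_n)$, and since $\sup_n\gamma(f_n)\le\omega^\eta$ we conclude $\gamma(f)\le\omega^\eta$. Combining this with the first direction gives $\beta(f)\approx\gamma(f)$, which is exactly the assertion that $\beta$ and $\gamma$ are essentially equivalent in the sense defined before Proposition~\ref{6} (note $\xi_1\approx\xi_2$ unwinds to: $\xi_1\le\omega^\eta \iff \xi_2\le\omega^\eta$ for every countable $\eta$).

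One point that needs a small amount of care: $\sup_n\gamma(f_n)$ is a supremum of countably many ordinals each $\le\omega^\eta$, so it is itself $\le\omega^\eta$, and then $\gamma(f)\lesssim\sup_n\gamma(f_n)\le\omega^\eta$ forces $\gamma(f)\le\omega^\eta$ directly from the definition of $\lesssim$. There is no genuine obstacle here; the work has all been done in the preceding propositions, and the theorem is really just the assembly of Propositions~\ref{p2.1}, \ref{p2.3} and~\ref{6}. The only thing to double-check is that Proposition~\ref{p2.3} is applied with a fixed $\beta_0\ge\beta(f)$ independent of $\varepsilon$ (which it is, since $\beta(f)=\sup_{\varepsilon>0}\beta(f,\varepsilon)$ already bounds every $\beta(f,\varepsilon)$), so that all the $g_{1/n}$ share the same bound $\omega^\eta$ on their $\gamma$-ranks. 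I expect the write-up to be only a few lines.
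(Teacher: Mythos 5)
Your proposal is correct and coincides with the paper's own argument: one direction is Proposition \ref{p2.1}, and the other is obtained by applying Proposition \ref{p2.3} with $\varepsilon=1/n$ to get a uniformly convergent sequence $(g_n)$ with $\gamma(g_n)\le\omega^{\eta}$ and then invoking Proposition \ref{6}. Nothing essential differs from the paper's proof.
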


\begin{proof}
By Proposition \ref{p2.1}, $\beta(f) \leq\gamma(f)$ for all $f\in
{\mathfrak{B}}_{1}(X)$. Conversely, suppose that $\beta(f) \leq\omega^{\eta}$.
By Proposition \ref{p2.3}, there is a sequence $(g_{n})$ in ${\mathfrak{B}%
}_{1}(X)$ such that $\gamma(g_{n}) \leq\omega^{\eta}$ for each $n$ and that
$(g_{n})$ converges to $f$ uniformly on $X$. By Proposition \ref{6},
$\gamma(g) \lesssim\sup\{\gamma(g_{n})\} \leq\omega^{\eta}$.
\end{proof}

\noindent\textbf{Remark}. If $X$ is compact metric, then $\beta(f) =
\gamma(f)$ for all $f\in{\mathfrak{B}}_{1}(X)$; see \cite[Theorem 5.5]{LT}. It
is not known if this continues to hold for general Polish spaces $X$. This is
part of Question 8.1 in \cite{EKV}.

\begin{cor}
\label{c2.6} The ranks $\beta_{\xi}^{\ast}$ and $\gamma_{\xi}^{\ast}$ are
essentially equivalent for any countable ordinal $\xi$.
\end{cor}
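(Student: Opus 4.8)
The strategy is to transfer the case $\xi = 1$ (Theorem \ref{7}) through the definition of the starred ranks, using the fact that each $\rho_\xi^\ast(f)$ is computed as a minimum of $\rho_{\tau'}(f)$ over the common set of topologies $T_{f,\xi}$. The key observation is that the two ranks $\beta_{\xi}^\ast$ and $\gamma_{\xi}^\ast$ range over the \emph{same} index set $T_{f,\xi}$ — it depends only on $f$, $\tau$, and $\xi$, not on which of the two ranks we use — so it suffices to compare $\beta_{\tau'}(f)$ with $\gamma_{\tau'}(f)$ for each fixed $\tau' \in T_{f,\xi}$, and this is precisely what Theorem \ref{7} gives us, since $f \in {\mathfrak{B}}_1(\tau')$ for every such $\tau'$.

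First I would fix $f \in {\mathfrak{B}}_\xi(X)$ and a countable ordinal $\zeta$, and suppose $\beta_\xi^\ast(f) \leq \omega^\zeta$. By definition there is some $\tau' \in T_{f,\xi}$ with $\beta_{\tau'}(f) = \beta_\xi^\ast(f) \leq \omega^\zeta$. Since $f \in {\mathfrak{B}}_1(X,\tau')$, Theorem \ref{7} applied in the Polish space $(X,\tau')$ yields $\beta_{\tau'}(f) \approx \gamma_{\tau'}(f)$; in particular $\beta_{\tau'}(f) \leq \omega^\zeta$ implies $\gamma_{\tau'}(f) \leq \omega^\zeta$. Hence $\gamma_\xi^\ast(f) = \min_{\tau'' \in T_{f,\xi}} \gamma_{\tau''}(f) \leq \gamma_{\tau'}(f) \leq \omega^\zeta$. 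The reverse implication is symmetric: if $\gamma_\xi^\ast(f) \leq \omega^\zeta$, pick $\tau' \in T_{f,\xi}$ attaining this value, apply Theorem \ref{7} in $(X,\tau')$ to get $\beta_{\tau'}(f) \leq \omega^\zeta$, and conclude $\beta_\xi^\ast(f) \leq \beta_{\tau'}(f) \leq \omega^\zeta$.

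I do not expect a genuine obstacle here; the only point requiring a moment's care is that Theorem \ref{7} is stated for an arbitrary Polish space, so it does apply verbatim to $(X,\tau')$ once we know $\tau'$ is a Polish topology (which is built into membership in $T_{f,\xi}$) and that $f$ is of Baire class $1$ with respect to $\tau'$ (likewise built in). One should also note in passing that the infimum defining $\rho_\xi^\ast$ is attained — this is implicit in the displayed formula $\rho_\xi^\ast(f) = \min_{\tau' \in T_{f,\xi}} \rho_{\tau'}(f)$ from the introduction — so the choice of an optimal $\tau'$ in each direction is legitimate; if one preferred to avoid this, the same argument goes through replacing ``attains'' by ``approximates,'' choosing $\tau'$ with $\rho_{\tau'}(f)$ already $\leq \omega^\zeta$, which exists by definition of the minimum. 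Thus the corollary follows immediately, and the proof is essentially a one-paragraph unwinding of definitions on top of Theorem \ref{7}.
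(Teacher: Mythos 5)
Your proof is correct and follows essentially the same route as the paper: fix a witnessing topology $\tau'\in T_{f,\xi}$ attaining the starred rank (legitimate, since a nonempty set of ordinals has a minimum) and apply the $\xi=1$ equivalence in the Polish space $(X,\tau')$. The only cosmetic difference is that for the direction $\gamma_{\xi}^{\ast}(f)\leq\omega^{\zeta}\Rightarrow\beta_{\xi}^{\ast}(f)\leq\omega^{\zeta}$ the paper cites \cite[Corollary 5.6]{EKV}, whereas you reapply Theorem \ref{7} (or just Proposition \ref{p2.1}), which is equally valid.
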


\begin{proof}
Let $f\in\mathfrak{B}_{\xi}( \tau) .$ If $\beta_{\xi}^{\ast}( f) \leq
\omega^{\zeta}$ for some $\zeta<\omega_{1}.$ Then there exists $\tau^{\prime
}\in T_{f,\xi}$ such that $\beta_{\tau^{\prime}}( f) =\beta_{\xi}^{\ast}( f)
\leq\omega^{\zeta}. $ It follows from Theorem \ref{7} that $\gamma_{\xi}%
^{\ast}( f) \leq\gamma_{\tau^{\prime}}( f) \leq\omega^{\zeta}.$ Thus
$\gamma_{\xi}^{\ast}( f) \lesssim\beta_{\xi}^{\ast}( f) .$ The reverse
inequality follows similarly from \cite[Corollary 5.6]{EKV}.
\end{proof}

\section{$\beta_{\xi}^{\ast}$ is not essentially multiplicative}

In this section, we show that the rank $\beta^{*}_{\xi}$ is not essentially
multiplicative. That is, there are a Polish space $X$ and functions $f,g
\in{\mathfrak{B}}_{\xi}(X)$ so that $\beta^{*}_{\xi}(fg) > \max\{\beta
^{*}_{\xi}(f), \beta^{*}_{\xi}(g)\}$. Since $\beta^{*}_{\xi}$ and $\gamma
^{*}_{\xi}$ are essentially equivalent by Corollary \ref{c2.6}, $\gamma
^{*}_{\xi}$ is also not essentially multiplicative. This answers Question 3.32
and Question 5.16 (= Question 8.4) from \cite{EKV} in the negative.

For the rest of the section, let $\tau$ be a Polish topology on $X$ and let
$\xi$ be a countable ordinal $>1$. The space of Baire class $\xi$ functions on
$(X,\tau)$ is denoted by ${\mathfrak{B}}_{\xi}(\tau)$. The ordinal rank
$\beta$ on ${\mathfrak{B}}_{1}(X,\tau)$ is denoted by $\beta_{\tau}$ and its
derived sets by $D_{\tau}^{\eta}(f,\varepsilon,X)$.

\begin{lem}
\label{l1} Let $(G_{n})$ be a sequence in $\Sigma_{\xi}^{0}(\tau)$ and let
$\tau^{\prime}$ be a Polish topology on $X$ so that $\tau\subseteq\tau
^{\prime}\subseteq\Sigma_{\xi}^{0}(\tau)$. There is a Polish topology
$\tau^{\prime\prime}$ containing $\tau^{\prime}\cup(G_{n})$ such that
$\tau^{\prime\prime}\subseteq\Sigma_{\xi}^{0}(\tau)$.
\end{lem}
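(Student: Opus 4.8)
The plan is to build $\tau''$ in two stages: first adjoin the countably many sets $G_n$ to $\tau'$ to get a refined topology, then invoke a standard Kuratowski-type theorem to upgrade it to a Polish topology, while keeping track of the $\Sigma^0_\xi(\tau)$ constraint throughout. Recall the classical fact (Kechris, \emph{Classical Descriptive Set Theory}, 13.1--13.3) that if $(X,\sigma)$ is Polish and $A\subseteq X$ is $\sigma$-Borel, then the topology $\sigma_A$ generated by $\sigma\cup\{A\}$ is Polish, has the same Borel sets as $\sigma$, and the process can be iterated over a countable family $(A_n)$ to produce a Polish topology $\sigma'$ with $\sigma\cup\{A_n\}\subseteq\sigma'$ and the same Borel $\sigma$-algebra. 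Applying this with $\sigma=\tau'$ and $A_n=G_n$ yields a Polish $\tau''\supseteq\tau'\cup\{G_n\}$.

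The real content is verifying $\tau''\subseteq\Sigma^0_\xi(\tau)$. First I would observe that a basis for $\tau''$ is obtained from a countable basis $(V_k)$ of $\tau'$ together with the $G_n$ by taking finite intersections and then countable unions of such. Since $\xi>1$, the class $\Sigma^0_\xi(\tau)$ is closed under countable unions by definition, and it is also closed under finite intersections: this is the standard fact that $\Sigma^0_\xi$ is closed under finite intersections for $\xi\geq 1$ (indeed for a finite intersection $\bigcup_m A_m \cap \bigcup_p B_p = \bigcup_{m,p}(A_m\cap B_p)$ with $A_m\in\Pi^0_{\xi_m}$, $B_p\in\Pi^0_{\eta_p}$, $\xi_m,\eta_p<\xi$, and $\Pi^0_\zeta$ is closed under finite intersections, while $\max\{\xi_m,\eta_p\}<\xi$). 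Each $V_k\in\tau'\subseteq\Sigma^0_\xi(\tau)$ and each $G_n\in\Sigma^0_\xi(\tau)$ by hypothesis, so every basic open set of $\tau''$ lies in $\Sigma^0_\xi(\tau)$, and hence so does every $\tau''$-open set, being a countable union of basic ones.

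The one point deserving care is that the Kuratowski construction produces a topology whose open sets are \emph{countable unions} of sets of the form $V\cap G_{n_1}\cap\cdots\cap G_{n_j}\cap G_{m_1}^c\cap\cdots\cap G_{m_i}^c$ — that is, complements of the $G_n$ may also appear. So I must also check $G_n^c\in\Sigma^0_\xi(\tau)$. This holds because $G_n\in\Sigma^0_\xi(\tau)\subseteq\Delta^0_{\xi+1}(\tau)$, hence $G_n^c\in\Pi^0_\xi(\tau)\subseteq\Sigma^0_{\xi+1}(\tau)$... but that is not quite $\Sigma^0_\xi$. To avoid this, I would instead adjoin to $\tau'$ the family consisting of $G_n$ \emph{together with} a $\Sigma^0_\xi(\tau)$ set whose complement is $G_n$ is not available in general; the cleaner route is to note that in the Kuratowski refinement, one adjoins $G_n$ as a \emph{clopen} set, so both $G_n$ and $G_n^c$ become open. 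Thus I should only apply the refinement when $G_n^c$ is also in $\Sigma^0_\xi(\tau)$, i.e. when $G_n\in\Delta^0_\xi(\tau)$. For general $G_n\in\Sigma^0_\xi(\tau)$, write $G_n=\bigcup_j H_{n,j}$ with $H_{n,j}\in\Pi^0_{\xi_{n,j}}(\tau)\subseteq\Delta^0_\xi(\tau)$ (using $\xi>1$ so $\xi_{n,j}+1\le\xi$), and adjoin the countable family $(H_{n,j})_{n,j}$ instead; then $G_n=\bigcup_j H_{n,j}$ is $\tau''$-open, each $H_{n,j}$ and $H_{n,j}^c$ lies in $\Delta^0_\xi(\tau)\subseteq\Sigma^0_\xi(\tau)$, and the basis argument above goes through verbatim. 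This replacement is the main obstacle to watch, and once it is in place the result follows.
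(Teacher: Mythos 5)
Your first step coincides with the paper's: replace each $G_n$ (and a countable basis $(B_n)$ of $\tau'$) by countably many pieces $H_{n,j}\in\Pi^0_{\xi_{n,j}}(\tau)\subseteq\Delta^0_\xi(\tau)$ with $\xi_{n,j}<\xi$. The gap lies in how you produce the Polish refinement and, crucially, in the structural description of it that your $\Sigma^0_\xi(\tau)$-verification relies on. The facts you quote from Kechris 13.1--13.3 do not say that the topology \emph{generated} by $\sigma\cup\{A\}$ is Polish for an arbitrary Borel (or even $\Pi^0_2$) set $A$: this is true when $A$ is closed (13.1), but already for $A=\mathbb{R}\setminus\mathbb{Q}$ in $\mathbb{R}$ the generated topology makes $\mathbb{Q}$ a closed subspace carrying its usual, non-Polish topology, so the generated topology is not Polish. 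The theorem that makes a general Borel set clopen in a finer Polish topology (Kechris 13.4, iterated for countably many sets) proceeds by a transfinite construction that adjoins many further open sets, and its conclusion controls only the Borel $\sigma$-algebra, not where the new open sets sit in the hierarchy over $\tau$. Hence your key claim that $\tau''$ has a basis consisting of sets of the form $V\cap H_{n_1,j_1}\cap\cdots\cap H_{m_1,l_1}^c\cap\cdots$ (so that the basis argument ``goes through verbatim'') is unsupported as soon as some $H_{n,j}$ fails to be closed, i.e.\ whenever $\xi\geq 3$. Your argument is complete only for $\xi=2$, where the $H_{n,j}$ may be taken closed, their complements are already open, and 13.1 together with 13.3 really does yield the generated topology with the basis you describe; you noticed a difficulty (complements appearing) but addressed the wrong one --- the real obstruction is that the Polish refinement is not the generated topology at all.

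What is needed, and what the paper invokes, is the level-preserving refinement theorem (Kuratowski; Kechris, Theorem 22.18): given countably many sets in $\bigcup_{\eta<\xi}\Pi^0_\eta(\tau)\subseteq\Delta^0_\xi(\tau)$, there is a Polish topology $\tau''$ with $\tau\subseteq\tau''\subseteq\Sigma^0_\xi(\tau)$ in which all of these sets are clopen; the inclusion $\tau''\subseteq\Sigma^0_\xi(\tau)$ is part of the conclusion rather than something read off a putative basis. Applying it to the pieces of the $B_n$ and of the $G_n$ gives $B_n,G_n\in\tau''$, hence $\tau'\cup(G_n)\subseteq\tau''$, which is exactly the paper's proof. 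If you want to avoid citing 22.18 you would have to prove such a refinement by induction on $\xi$, essentially reproving it; the ``same Borel sets'' results 13.1--13.5 alone cannot deliver $\tau''\subseteq\Sigma^0_\xi(\tau)$.
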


\begin{proof}
Let $(B_{n})$ be a countable basis of the topology $\tau^{\prime}$. For each
$n$, $B_{n}$ and $G_{n}$ are countable unions of sets in $\cup_{\eta<\xi}%
\Pi_{\eta}^{0}(\tau)\subseteq\Delta_{\xi}^{0}(\tau)$. By Kuratowski's Theorem
\cite[Theorem 22.18]{K}, there is a Polish topology $\tau^{\prime\prime
}\subseteq\Sigma_{\xi}^{0}(\tau)$ such that all $G_{n}$ and $B_{n}$ are
countable unions of sets in $\Delta_{1}^{0}(\tau^{\prime\prime})$. In
particular, $G_{n},B_{n}\in\tau^{\prime\prime}$ for all $n$. Thus
$\tau^{\prime\prime}$ contains $\tau^{\prime}\cup(G_{n})$.
\end{proof}

\begin{prop}
\label{p2} Let $\zeta$ and $\eta$ be countable ordinals so that $\zeta$ is
limit and that $\zeta^{\prime}\cdot\eta\leq\zeta$ for any $\zeta^{\prime
}<\zeta$. Let $\tau^{\prime}$ be a Polish topology on $X$ so that
$\tau\subseteq\tau^{\prime}\subseteq\Sigma_{\xi}^{0}(\tau)$. Suppose that
$\chi_{A}\in{\mathfrak{B}}_{1}(\tau^{\prime})$ and that $\beta_{\tau^{\prime}%
}(\chi_{A})\leq\zeta\cdot\eta$. Then there exist a Polish topology
$\tau^{\prime\prime}$ on $X$ so that $\tau^{\prime}\subseteq\tau^{\prime
\prime}\subseteq\Sigma_{\xi}^{0}(\tau)$ and a function $g:X\to{\mathbb{N}}$
such that $\beta_{\tau^{\prime\prime}}(\frac{\chi_{A}}{g})\leq\zeta$ and
$\beta_{\tau^{\prime\prime}}(g)\leq\eta$.
\end{prop}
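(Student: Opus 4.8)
The plan is to factor the characteristic function $\chi_A$ through a countable partition of $A$ adapted to the derived sets of $\chi_A$ under $\tau'$. Since $\zeta$ is a limit ordinal, write $\zeta = \sup_k \zeta_k$ for an increasing sequence of ordinals $\zeta_k < \zeta$. Under the hypothesis $\beta_{\tau'}(\chi_A) \le \zeta\cdot\eta$, the derivation $D^1_{\tau'}(\chi_A,\varepsilon,\cdot)$ is really only nontrivial for $\varepsilon \le 1$ (since $\chi_A$ takes values in $\{0,1\}$, any jump of size $\ge\varepsilon$ with $\varepsilon>1$ is impossible, and for $0<\varepsilon\le 1$ the derived set is the same as for $\varepsilon=1$); so $\beta_{\tau'}(\chi_A) = \beta_{\tau'}(\chi_A,1) \le \zeta\cdot\eta$, and the transfinite sequence $A_\alpha := D^\alpha_{\tau'}(\chi_A,1,X)$ is a decreasing sequence of $\tau'$-closed sets with $A_{\zeta\cdot\eta} = \emptyset$. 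The idea is to assign to each $x \in A$ a natural number $g(x)$ that records, roughly, ``how deep into the $\eta$-block structure'' the point $x$ sits, chosen so that dividing by $g$ reduces the rank within each block from $\zeta\cdot\eta$ down to $\zeta$, while $g$ itself, viewed as an $\mathbb N$-valued function, has oscillation rank at most $\eta$.

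**Constructing $g$ and the new topology.** Concretely, for $0 \le j < \eta$ let $C_j := A_{\zeta\cdot j} \setminus A_{\zeta\cdot(j+1)}$ (the $j$-th ``block''), together with the possibility that at limit stages of $\eta$ one takes intersections; these sets are in $\Sigma^0_\xi(\tau)$ because they are built by a transfinite $\tau'$-derivation and $\tau' \subseteq \Sigma^0_\xi(\tau)$, which is closed under the relevant operations. On $C_j \cap A$ one wants $g$ to take a value depending on $j$ in a way that makes $\chi_A/g$ small in rank; the natural choice is to let $g$ be constant on each $C_j$, say $g \equiv n_j$ on $C_j \cap A$ for a suitable enumeration, and $g \equiv 1$ off $A$ (so that $\chi_A/g$ is still supported on $A$). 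Then one applies Lemma~\ref{l1} to adjoin to $\tau'$ the countably many sets $\{A_{\zeta\cdot j} : j < \eta\}$ (and their relevant Boolean combinations, plus $A$ itself and the level sets of $g$), obtaining a Polish topology $\tau'' \subseteq \Sigma^0_\xi(\tau)$ in which all these sets are clopen. Making the $A_{\zeta\cdot j}$ clopen is the crucial move: it ``decouples'' the blocks so that the derivation restricted to $C_j$ behaves independently, and within each block the rank is at most $\zeta$ by the hypothesis $\zeta'\cdot\eta \le \zeta$ together with the way the $A_\alpha$ sit inside one another.

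**The two rank estimates.** With $\tau''$ in hand, one checks $\beta_{\tau''}(g) \le \eta$: since $g$ is constant $n_j$ on the $\tau''$-clopen-ish set $C_j$ and the $C_j$ are linearly ordered by the derivation, the set where $g$ has an oscillation of size $\ge\varepsilon$ at scale $\varepsilon$ is contained in $\bigcup\{A_{\zeta\cdot j} : n_j, n_{j'} \text{ can differ by } \ge\varepsilon\}$, and iterating the derivation $\eta$ times exhausts this (each application peels off at least one block, or one uses that $D^j_{\tau''}(g,\varepsilon,\cdot) \subseteq A_{\zeta\cdot j}$ by induction on $j$, so $D^\eta = \emptyset$). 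For the other estimate, $\beta_{\tau''}(\chi_A/g) \le \zeta$: the function $\chi_A/g$ equals $1/n_j$ on $C_j \cap A$ and $0$ elsewhere. Fix $\varepsilon > 0$; one shows $D^\alpha_{\tau''}(\chi_A/g,\varepsilon,X) \subseteq A_\alpha$ for $\alpha < \zeta$ by transfinite induction, using that off $A_\alpha$ (within the finitely many blocks $C_j$ that can produce a jump $\ge\varepsilon$, since $1/n_j \to 0$) the function $\chi_A/g$ agrees, up to a $\tau''$-clopen decomposition, with a function of oscillation rank bounded by the *local* rank of $\chi_A$, which inside block $j$ is at most $\zeta$ because $\beta_{\tau'}(\chi_A)\le\zeta\cdot\eta$ distributes as at most $\zeta$ per block after the blocks are separated. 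The inequality $\zeta'\cdot\eta\le\zeta$ for all $\zeta'<\zeta$ is exactly what guarantees that summing the block-contributions (each $<\zeta$) over the $\eta$ blocks, as the derivation for $g$ does, lands back inside $\zeta\cdot\eta$ — consistency of the bookkeeping — while simultaneously each block alone is capped at $\zeta$.

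**Main obstacle.** The technical heart, and the step I expect to be most delicate, is making the ``rank splits as $\le\zeta$ per block'' claim precise: one must verify that after adjoining the $A_{\zeta\cdot j}$ as clopen sets, the $\tau''$-derivation of $\chi_A/g$ genuinely respects the block decomposition — i.e.\ that a point in $C_j$ cannot be kept alive in the derived sequence by oscillations caused by points of a different block $C_{j'}$. This requires that the clopen separation really does localize the derivation, which hinges on the fact (to be checked from the definition of $D^1$) that if $A_{\zeta\cdot j}$ and its complement-within-$A_{\zeta\cdot(j-1)}$ are both $\tau''$-open, then $D^1_{\tau''}(h,\varepsilon,F) \cap C_j = D^1_{\tau''}(h,\varepsilon,F\cap \overline{C_j}) \cap C_j$ for $h = \chi_A/g$ — plus a careful treatment of limit stages $j$ within $\eta$, where $C_j$ is governed by $\bigcap_{j'<j} A_{\zeta\cdot j'}$ rather than a single set, and one must confirm this intersection is still $\Sigma^0_\xi(\tau)$ (it is, being countable when $\eta$ is countable) so that Lemma~\ref{l1} still applies.
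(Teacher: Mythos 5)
There is a genuine gap, and it sits exactly at the step you yourself call the ``crucial move'': adjoining the derived sets $A_{\zeta\cdot j}=D^{\zeta\cdot j}_{\tau'}(\chi_A,1,X)$ (and the blocks $C_j$, the set $A$, the level sets of your $g$) to $\tau'$ so that they become clopen. Lemma \ref{l1} only allows adjoining sets that lie in $\Sigma^0_\xi(\tau)$, and these sets do not: a $\tau'$-closed set is the complement of a member of $\tau'\subseteq\Sigma^0_\xi(\tau)$, hence lies in $\Pi^0_\xi(\tau)$, and $\Sigma^0_\xi(\tau)$ is not closed under complementation (nor under the countable intersections you invoke at limit stages of $\eta$); the blocks $C_j=A_{\zeta\cdot j}\setminus A_{\zeta\cdot(j+1)}$ and the set $A$ are likewise only of ambiguous class one level higher. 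Moreover, if this step were available the proposition's intended application would collapse: with the $C_j$ clopen in some $\tau''\subseteq\Sigma^0_\xi(\tau)$ one checks by induction that $D^{\alpha}_{\tau''}(\chi_A,1,X)\cap C_j\subseteq D^{\alpha}_{\tau'}(\chi_A,1,A_{\zeta\cdot j})$, so $D^{\zeta}_{\tau''}(\chi_A,1,X)=\emptyset$ and $\beta^{\ast}_\xi(\chi_A)\leq\zeta$ — contradicting the functions supplied by Proposition \ref{p5} (with $\omega^{\zeta}$ in place of $\zeta$), which are precisely the input to Theorem \ref{t6}. The paper never makes closed derived sets open; instead it applies the generalized reduction property of $\Sigma^0_\xi(\tau)$ to the $\tau'$-open sets $(D^{\zeta\cdot\theta+\rho}_{\tau'})^{c}$, $0<\rho<\zeta$, to obtain pairwise disjoint sets $G^{\theta}_{\rho}\in\Sigma^0_\xi(\tau)$ which Lemma \ref{l1} does permit adjoining; the partition pieces $G^{\theta}_{\rho}\cap D^{\zeta\cdot\theta}_{\tau'}$ are then only \emph{relatively} $\tau''$-open in the $\tau''$-closed sets $D^{\zeta\cdot\theta}_{\tau'}$, and all estimates are carried out relative to those closed sets.

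The second problem is that your $g$ encodes the wrong coordinate. You make $g$ constant ($=n_j$) on the whole block, i.e.\ $g$ records the block index $j<\eta$. Dividing by a constant does not lower the within-block rank: for $\varepsilon\leq 1/n_j$ the derivation of $\chi_A/n_j$ at scale $\varepsilon$ inside block $j$ is the derivation of $\chi_A$ at scale $1$, which can take the full $\zeta$ steps; and since the blocks are nested rather than clopen-separated, the finitely many relevant blocks are traversed consecutively, so the natural bound is of the order $\zeta\cdot m$ (with $m$ the number of blocks with $n_j\leq 1/\varepsilon$), which is not $\leq\zeta$. In the paper, $g(x)=i(\rho)$ records the \emph{depth} $\rho\in[1,\zeta)$ within a block, via a bijection $i:[1,\zeta)\to{\mathbb N}$: for fixed $\varepsilon$ only depths $\rho\leq\rho_{\varepsilon}$ survive, so each block is crossed by the derivation of $\chi_A/g$ in at most $1+\rho_{\varepsilon}<\zeta$ steps, and the total over the $\eta$ blocks is $(1+\rho_{\varepsilon})\cdot\eta\leq\zeta$ — this is exactly where the hypothesis $\zeta'\cdot\eta\leq\zeta$ enters — while $g$, being constant on each relatively open piece $G^{\theta}_{\rho}\cap D^{\zeta\cdot\theta}_{\tau'}$, loses one block per derivation step, giving $\beta_{\tau''}(g)\leq\eta$. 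Your bookkeeping essentially reverses these two roles, and without the (unavailable) clopen separation neither of your two rank estimates goes through.
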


\begin{proof}
Abbreviate $D_{\tau^{\prime}}^{\rho}(\chi_{A},1,X)$ as $D_{\tau^{\prime}%
}^{\rho}$. If $0\leq\theta<\eta$, then
\[
(D_{\tau^{\prime}}^{\zeta\cdot(\theta+1)})^{c}=\bigcup_{0<\rho<\zeta}%
(D_{\tau^{\prime}}^{\zeta\cdot\theta+\rho})^{c}.
\]
Each set $(D_{\tau^{\prime}}^{\zeta\cdot\theta+\rho})^{c}$ is $\tau^{\prime}%
$-open and thus in $\Sigma_{\xi}^{0}(\tau)$. Since the class $\Sigma^{0}_{\xi
}(\tau)$ has the generalized reduction property \cite[Theorem 22.16]{K}, there
are pairwise disjoint sets $(G_{\rho}^{\theta})_{0<\rho<\zeta}$ in
$\Sigma_{\xi}^{0}(\tau)$ such that $G_{\rho}^{\theta}\subseteq(D_{\tau
^{\prime}}^{\zeta\cdot\theta+\rho})^{c}$ for each $\rho$ and that
$\bigcup_{0<\rho<\zeta}G_{\rho}^{\theta}=(D_{\tau^{\prime}}^{\zeta\cdot
(\theta+1)})^{c}$. By Lemma \ref{l1}, there is a Polish topology $\tau
^{\prime\prime}\subseteq\Sigma_{\xi}^{0}(\tau)$ that contains
\[
\tau^{\prime}\cup\{G_{\rho}^{\theta}:0<\rho<\zeta,\theta<\eta\}.
\]
Since
\[
\cup_{0<\rho<\zeta}(G^{\theta}_{\rho}\cap D^{\zeta\cdot\theta}_{\tau^{\prime}%
}) = D^{\zeta\cdot\theta}_{\tau^{\prime}} \backslash D^{\zeta\cdot(\theta
+1)}_{\tau^{\prime}} \text{ and }D^{\zeta\cdot\eta}_{\tau^{\prime}} =
\emptyset,
\]
the sets $G_{\rho}^{\theta}\cap D^{\zeta\cdot\theta}$, $0<\rho<\zeta$,
$\theta<\eta$, form a partition of $X$. Fix a bijection $i$ from the ordinal
interval $[1,\zeta)$ onto ${\mathbb{N}}$. Define $g:X\to{\mathbb{N}}$ by
$g(x)=i(\rho)$ for $x\in G_{\rho}^{\theta}\cap D^{\zeta\cdot\theta}%
_{\tau^{\prime}}$. For any $\varepsilon>0$, there are only finitely many
$\rho\in\lbrack1,\zeta)$ such that $i(\rho)\leq\frac{1}{\varepsilon}$. Fix
$\rho_{\varepsilon}<\zeta$ so that $i(\rho)>\frac{1}{\varepsilon}$ for all
$\rho>\rho_{\varepsilon}$.

\medskip

\noindent\underline{Claim}. For any $\varepsilon>0$ and any $\theta\leq\eta$,
\begin{equation}
D_{\tau^{\prime\prime}}^{\theta}({g},\varepsilon,X)\cup D_{\tau^{\prime\prime
}}^{(1+\rho_{\varepsilon})\cdot\theta}(\frac{\chi_{A}}{g},\varepsilon
,X)\subseteq D_{\tau^{\prime}}^{\zeta\cdot\theta}. \label{e1}%
\end{equation}

\medskip\noindent We prove the claim by induction on $\theta$. The case
$\theta=0$ is trivial. Suppose that it is true for some $\theta<\eta$. Let
\[
P=D_{\tau^{\prime\prime}}^{\theta}({g},\varepsilon,X)\text{ and }%
Q=D_{\tau^{\prime\prime}}^{(1+\rho_{\varepsilon})\cdot\theta}(\frac{{\chi}%
_{A}}{g},\varepsilon,X).
\]
Then $P\cup Q\subseteq D_{\tau^{\prime}}^{\zeta\cdot\theta}$ and the latter
set is $\tau^{\prime}$- and thus $\tau^{\prime\prime}$-closed. Hence
\begin{align}
D_{\tau^{\prime\prime}}^{\theta+1}({g},\varepsilon,X)  &  =D_{\tau
^{\prime\prime}}({g},\varepsilon,P)\subseteq D_{\tau^{\prime\prime}}%
({g},\varepsilon,D_{\tau^{\prime}}^{\zeta\cdot\theta})\text{ and }\label{e2}\\
D_{\tau^{\prime\prime}}^{(1+\rho_{\varepsilon})\cdot(\theta+1)}(\frac{\chi
_{A}}{g},\varepsilon,X)  &  =D_{\tau^{\prime\prime}}^{1+\rho_{\varepsilon}%
}(\frac{\chi_{A}}{g},\varepsilon,Q)\subseteq D_{\tau^{\prime\prime}}%
^{1+\rho_{\varepsilon}}(\frac{\chi_{A}}{g},\varepsilon,D_{\tau^{\prime}%
}^{\zeta\cdot\theta}). \label{e3}%
\end{align}
For $0<\rho<\zeta$, $G_{\rho}^{\theta}\cap D_{\tau^{\prime}}^{\zeta\cdot
\theta}$ is relatively $\tau^{\prime\prime}$-open in $D_{\tau^{\prime}}%
^{\zeta\cdot\theta}$. Furthermore, $g$ is constant on this set while
$\frac{\chi_{A}}{g}$ takes values in the set $\{0,\frac{1}{i(\rho)}\}$ there.
Hence
\begin{align*}
D_{\tau^{\prime\prime}}({g},\varepsilon,D_{\tau^{\prime}}^{\zeta\cdot\theta
})\cap G_{\rho}^{\theta}  &  =\emptyset\text{ and }\\
D_{\tau^{\prime\prime}}(\frac{\chi_{A}}{g},\varepsilon,D_{\tau^{\prime}%
}^{\zeta\cdot\theta})\cap G_{\rho}^{\theta}  &  =\emptyset\text{ if
$i(\rho)>\frac{1}{\varepsilon}$.}%
\end{align*}
Therefore,
\begin{align}
D_{\tau^{\prime\prime}}({g},\varepsilon,D_{\tau^{\prime}}^{\zeta\cdot\theta})
&  \subseteq D_{\tau^{\prime}}^{\zeta\cdot(\theta+1)}\text{ and }\label{e4}\\
Q^{\prime}=D_{\tau^{\prime\prime}}(\frac{\chi_{A}}{g},\varepsilon
,D_{\tau^{\prime}}^{\zeta\cdot\theta})  &  \subseteq\bigcup_{\rho\leq
\rho_{\varepsilon}}(G_{\rho}^{\theta}\cap D_{\tau^{\prime}}^{\zeta\cdot\theta
})\cup D_{\tau^{\prime}}^{\zeta\cdot(\theta+1)}\label{e5}\\
&  \subseteq(D_{\tau^{\prime}}^{\zeta\cdot\theta}\backslash D_{\tau^{\prime}%
}^{\zeta\cdot\theta+\rho_{\varepsilon}})\cup D_{\tau^{\prime}}^{\zeta
\cdot(\theta+1)}.\nonumber
\end{align}
From (\ref{e2}) and (\ref{e4}), we see that
\begin{equation}
D_{\tau^{\prime\prime}}^{\theta+1}({g},\varepsilon,X)\subseteq D_{\tau
^{\prime}}^{\zeta\cdot(\theta+1)}. \label{e6}%
\end{equation}
Since $g(x)\geq1$ for all $x$, $D_{\tau^{\prime\prime}}^{\rho_{\varepsilon}%
}(\frac{\chi_{A}}{g},\varepsilon,Q^{\prime})\subseteq D_{\tau^{\prime\prime}%
}^{\rho_{\varepsilon}}(\chi_{A},1,Q^{\prime})$. Note also that $Q^{\prime
}\subseteq D_{\tau^{\prime}}^{\zeta\cdot\theta}$ and $\tau^{\prime}%
\subseteq\tau^{\prime\prime}$. Thus
\begin{align*}
D_{\tau^{\prime\prime}}^{1+\rho_{\varepsilon}}  &  (\frac{\chi_{A}}%
{g},\varepsilon,D_{\tau^{\prime}}^{\zeta\cdot\theta})=D_{\tau^{\prime\prime}%
}^{\rho_{\varepsilon}}(\frac{\chi_{A}}{g},\varepsilon,Q^{\prime})\\
&  \subseteq D_{\tau^{\prime\prime}}^{\rho_{\varepsilon}}(\chi_{A}%
,1,Q^{\prime})\subseteq D_{\tau^{\prime}}^{\rho_{\varepsilon}}(\chi
_{A},1,D_{\tau^{\prime}}^{\zeta\cdot\theta})\cap Q^{\prime}\\
&  =D_{\tau^{\prime}}^{\zeta\cdot\theta+\rho_{\varepsilon}}\cap Q^{\prime
}\subseteq D_{\tau^{\prime}}^{\zeta\cdot(\theta+1)}\text{ by (\ref{e5}).}%
\end{align*}
In combination with (\ref{e3}), this gives
\begin{equation}
D_{\tau^{\prime\prime}}^{(1+\rho_{\varepsilon})\cdot(\theta+1)}(\frac{\chi
_{A}}{g},\varepsilon,X)\subseteq D_{\tau^{\prime}}^{\zeta\cdot(\theta+1)}.
\label{e7}%
\end{equation}
Together, (\ref{e6}) and (\ref{e7}) yield the claim for $\theta+1$.

Finally, consider the case where $\theta\leq\eta$ is a limit ordinal and
assume that the claim has been verified for all $\theta^{\prime}<\theta$.
Then
\begin{align*}
D_{\tau^{\prime\prime}}^{\theta}({g},\varepsilon,X)  &  \cup D_{\tau
^{\prime\prime}}^{(1+\rho_{\varepsilon})\cdot\theta}(\frac{\chi_{A}}%
{g},\varepsilon,X)\\
&  =\bigcap_{\theta^{\prime}<\theta}D_{\tau^{\prime\prime}}^{\theta^{\prime}%
}({g},\varepsilon,X)\cup\bigcap_{\theta^{\prime}<\theta}D_{\tau^{\prime\prime
}}^{(1+\rho_{\varepsilon})\cdot{\theta^{\prime}}}(\frac{\chi_{A}}{{g}%
},\varepsilon,X)\\
&  \subseteq\bigcap_{\theta^{\prime}<\theta}D_{\tau^{\prime}}^{\zeta
\cdot\theta^{\prime}}=D_{\tau^{\prime}}^{\zeta\cdot\theta}.
\end{align*}
This completes the proof of the claim.

\medskip

For a given $\varepsilon>0$, apply the claim with $\theta=\eta$. Since
$D_{\tau^{\prime}}^{\zeta\cdot\eta}=\emptyset$,
\[
D_{\tau^{\prime\prime}}^{\eta}({g},\varepsilon,X)=\emptyset=D_{\tau
^{\prime\prime}}^{(1+\rho_{\varepsilon})\cdot{\eta}}(\frac{\chi_{A}}%
{g},\varepsilon,X).
\]
The first part implies that $\beta_{\tau^{\prime\prime}}(g)\leq\eta$. For the
second part, note that $\rho_{\varepsilon}<\zeta$ and $\zeta$ is limit. Hence
$1+\rho_{\varepsilon}<\zeta$. By assumption, $(1+\rho_{\varepsilon})\cdot
\eta\leq\zeta$. Thus
\[
D_{\tau^{\prime\prime}}^{\zeta}(\frac{\chi_{A}}{g},\varepsilon,X)\subseteq
D_{\tau^{\prime\prime}}^{(1+\rho_{\varepsilon})\cdot{\eta}}(\frac{\chi_{A}}%
{g},\varepsilon,X)=\emptyset.
\]
It follows that $\beta_{\tau^{\prime\prime}}(\frac{\chi_{A}}{g})\leq\zeta$.
\end{proof}

\begin{lem}
\label{l4} Let $\tau^{\prime}$ be a Polish topology on $X$. Suppose that
$\chi_{A}\in{\mathfrak{B}}_{1}(\tau^{\prime})$ and $G\in\tau^{\prime}$. If
$\tau^{\prime\prime}$ is a Polish topology containing $\tau^{\prime}$, then
$G\cap D_{\tau^{\prime\prime}}^{\rho}(\chi_{A},1,X)=G\cap D_{\tau
^{\prime\prime}}^{\rho}(\chi_{A\cap G},1,X)$ for any countable ordinal $\rho$.
\end{lem}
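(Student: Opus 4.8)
The plan is to prove the two inclusions
\[
G\cap D_{\tau^{\prime\prime}}^{\rho}(\chi_{A},1,X)\subseteq G\cap D_{\tau^{\prime\prime}}^{\rho}(\chi_{A\cap G},1,X)
\quad\text{and}\quad
G\cap D_{\tau^{\prime\prime}}^{\rho}(\chi_{A\cap G},1,X)\subseteq G\cap D_{\tau^{\prime\prime}}^{\rho}(\chi_{A},1,X)
\]
simultaneously by transfinite induction on $\rho$. The key local observation, which drives everything, is this: if $x\in G$, then since $G\in\tau^{\prime}\subseteq\tau^{\prime\prime}$, every neighborhood of $x$ (in any topology containing $\tau^{\prime}$) can be shrunk to one contained in $G$; and on such a neighborhood the functions $\chi_{A}$ and $\chi_{A\cap G}$ agree. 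Hence the condition defining the one-step derivative $D_{\tau^{\prime\prime}}^{1}(\chi_{A},1,F)$ at a point $x\in G\cap F$ is verbatim the same as the condition defining $D_{\tau^{\prime\prime}}^{1}(\chi_{A\cap G},1,F)$ at $x$, provided we only test with neighborhoods $U\subseteq G$ — which we may. So for any $\tau^{\prime\prime}$-closed set $F$,
\[
G\cap D_{\tau^{\prime\prime}}^{1}(\chi_{A},1,F)=G\cap D_{\tau^{\prime\prime}}^{1}(\chi_{A\cap G},1,F).
\]

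For the successor step, suppose the claimed equality holds at level $\rho$. Write $F_{\rho}=D_{\tau^{\prime\prime}}^{\rho}(\chi_{A},1,X)$ and $F_{\rho}'=D_{\tau^{\prime\prime}}^{\rho}(\chi_{A\cap G},1,X)$, so $G\cap F_{\rho}=G\cap F_{\rho}'$. I want to relate $G\cap D_{\tau^{\prime\prime}}^{1}(\chi_{A},1,F_{\rho})$ with $G\cap D_{\tau^{\prime\prime}}^{1}(\chi_{A\cap G},1,F_{\rho}')$. The point is that whether a point $x\in G$ lies in the next derived set depends only on the trace of the ambient closed set near $x$, and near a point of $G$ the closed sets $F_{\rho}$ and $F_{\rho}'$ have the same trace (intersect $G$, which is open). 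More precisely, for $x\in G$, using a neighborhood basis at $x$ consisting of sets contained in $G$, we have $U\cap F_{\rho}=U\cap(G\cap F_{\rho})=U\cap(G\cap F_{\rho}')=U\cap F_{\rho}'$; combining this with the agreement of $\chi_{A}$ and $\chi_{A\cap G}$ on $U$, the defining oscillation condition at $x$ is identical in both cases. This yields $G\cap D_{\tau^{\prime\prime}}^{\rho+1}(\chi_{A},1,X)=G\cap D_{\tau^{\prime\prime}}^{\rho+1}(\chi_{A\cap G},1,X)$.

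For a limit ordinal $\rho$, one simply intersects: $D_{\tau^{\prime\prime}}^{\rho}(\chi_{A},1,X)=\bigcap_{\rho'<\rho}D_{\tau^{\prime\prime}}^{\rho'}(\chi_{A},1,X)$, and likewise for $\chi_{A\cap G}$, so intersecting the inductive equalities with $G$ gives the conclusion at $\rho$. I expect the main (though still modest) obstacle to be bookkeeping the interplay between "localize the neighborhood inside $G$" and "the ambient closed set at stage $\rho$ may differ off $G$": one must be careful that in the definition of $D^{1}(\,\cdot\,,\varepsilon,F)$ the witnesses $x_1,x_2$ are required to lie in $F\cap U$, so I need $U\subseteq G$ to force them into $G$ where the two characteristic functions coincide and where $F_{\rho}$, $F_{\rho}'$ coincide. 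Since $G$ is $\tau^{\prime\prime}$-open this localization is always available, so the argument goes through cleanly; one should also note at the outset that $\chi_{A\cap G}=\chi_{A}\chi_{G}\in{\mathfrak{B}}_{1}(\tau^{\prime})\subseteq{\mathfrak{B}}_{1}(\tau^{\prime\prime})$, so all the derivations in sight are defined.
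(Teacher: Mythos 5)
Your proposal is correct and follows essentially the same route as the paper: transfinite induction on $\rho$, with the successor step handled by localizing to neighborhoods inside the $\tau^{\prime\prime}$-open set $G$, where $\chi_{A}$ and $\chi_{A\cap G}$ coincide and, by the inductive hypothesis, the two derived sets have the same trace (the paper phrases this by intersecting an arbitrary $U$ with $G$ and transferring witnesses, which is the same localization). The limit step by intersection is identical in both arguments.
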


\begin{proof}
The lemma clearly holds for $\rho=0$. Suppose that it holds for some countable
ordinal $\rho$. Let $x\in G\cap D_{\tau^{\prime\prime}}^{\rho+1}(\chi
_{A},1,X)$. In particular, $x\in G\cap D_{\tau^{\prime\prime}}^{\rho}(\chi
_{A},1,X)\subseteq D_{\tau^{\prime\prime}}^{\rho}(\chi_{A\cap G},1,X)$. If
$x\in U\in\tau^{\prime\prime}$, then $x\in U\cap G\in\tau^{\prime\prime}$.
Thus there are $y,z\in U\cap G\cap D_{\tau^{\prime\prime}}^{\rho}(\chi
_{A},1,X)$ such that $|\chi_{A}(y)-\chi_{A}(z)|\geq1$. Without loss of
generality, we may assume that $y\in A$ and $z\notin A$. But then $y\in A\cap
G$ and $z\notin A\cap G$. Hence $|\chi_{A\cap G}(y)-\chi_{A\cap G}(z)|\geq1$.
Since
\[
y,z\in U \cap G\cap D_{\tau^{\prime\prime}}^{\rho}(\chi_{A},1,X)\subseteq U
\cap D_{\tau^{\prime\prime}}^{\rho}(\chi_{A\cap G},1,X),
\]
we conclude that $x\in D_{\tau^{\prime\prime}}^{\rho+1}(\chi_{A\cap G},1,X)$.
This proves that
\[
G\cap D_{\tau^{\prime\prime}}^{\rho+1}(\chi_{A},1,X)\subseteq G\cap
D_{\tau^{\prime\prime}}^{\rho+1}(\chi_{A\cap G},1,X).
\]
Similarly, assume that $x\in G\cap D_{\tau^{\prime\prime}}^{\rho+1}%
(\chi_{A\cap G},1,X)$. In particular, $x\in D_{\tau^{\prime\prime}}^{\rho
}(\chi_{A},1,X)$. For any $U\in\tau^{\prime\prime}$ such that $x\in U$, there
are $y,z\in U\cap G\cap D_{\tau^{\prime\prime}}^{\rho}(\chi_{A\cap
G},1,X)\subseteq U\cap D_{\tau^{\prime\prime}}^{\rho}(\chi_{A},1,X)$ such that
$y\in A\cap G$ and $z\notin A\cap G$. Thus $y\in A$ and $z\notin A$. This
proves that $x\in D_{\tau^{\prime\prime}}^{\rho+1}(\chi_{A},1,X)$. Therefore,
$G\cap D_{\tau^{\prime\prime}}^{\rho+1}(\chi_{A},1,X)\supseteq G\cap
D_{\tau^{\prime\prime}}^{\rho+1}(\chi_{A\cap G},1,X)$.

Suppose that $\rho$ is a countable limit ordinal and that the lemma holds for
any $\rho^{\prime}<\rho$. Then
\begin{align*}
G\cap D_{\tau^{\prime\prime}}^{\rho}(\chi_{A},1,X)  &  =\bigcap_{\rho^{\prime
}<\rho}(G\cap D_{\tau^{\prime\prime}}^{\rho^{\prime}}(\chi_{A},1,X))\\
&  =G\cap\bigcap_{\rho^{\prime}<\rho}D_{\tau^{\prime\prime}}^{\rho^{\prime}%
}(\chi_{A\cap G},1,X)=G\cap D_{\tau^{\prime\prime}}^{\rho}(\chi_{A\cap
G},1,X).
\end{align*}

\end{proof}

\begin{prop}
\label{p5} Let $(X,\tau)$ be an uncountable Polish space and let $\zeta$ be a
nonzero countable ordinal. Then there exists a set $A$ such that $\chi_{A}%
\in{\mathfrak{B}}_{\xi}(\tau)$ and that $\zeta<\beta_{\xi}^{\ast}(\chi
_{A})\leq\zeta+2$.
\end{prop}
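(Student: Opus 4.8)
The plan is to engineer a set $A\subseteq X$ whose indicator function, viewed in the original topology $\tau$, is genuinely of Baire class $2$ (so $\chi_A\in{\mathfrak{B}}_\xi(\tau)$ since $\xi>1$) and for which every admissible refinement $\tau'\in T_{\chi_A,\xi}$ produces $\beta_{\tau'}(\chi_A)>\zeta$, while at the same time exhibiting one particular refinement that witnesses $\beta_\xi^\ast(\chi_A)\le\zeta+2$. The natural candidate is built from a transfinite scheme realizing the ordinal $\zeta$ as a derivation rank. First I would fix a countable closed subset (or more generally a scattered set) $S\subseteq X$ whose Cantor--Bendixson rank is exactly $\zeta$ (this uses that $X$ is uncountable Polish, so such $S$ exists), together with a decreasing transfinite sequence of closed sets $S=S_0\supseteq S_1\supseteq\cdots$ with $S_\rho$ the $\rho$-th Cantor--Bendixson derivative. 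The set $A$ should be chosen so that the oscillation derivation $D^1(\chi_A,1,\cdot)$ mimics the Cantor--Bendixson derivative on $S$: for instance, let $A$ pick out, in each "level" $S_\rho\setminus S_{\rho+1}$, an appropriate relatively clopen piece so that $D^\rho_\tau(\chi_A,1,X)\supseteq S_\rho$ but the derivation also cannot terminate before stage $\zeta$. One must simultaneously arrange that $\chi_A\in{\mathfrak{B}}_2(\tau)$ — easiest if $A$ is a countable union of $\tau$-closed sets, i.e. $A\in\Sigma^0_2(\tau)$, which follows automatically if each level contribution is $\tau$-closed and the levels are indexed along a fixed well-order.

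Next I would establish the upper bound $\beta_\xi^\ast(\chi_A)\le\zeta+2$ by producing an explicit $\tau'$. Since $A\in\Sigma^0_2(\tau)$, we may adjoin countably many $\tau$-closed sets (the level pieces) to $\tau$, invoking Lemma \ref{l1} (with $\tau'=\tau$), to obtain a Polish $\tau'\subseteq\Sigma^0_2(\tau)\subseteq\Sigma^0_\xi(\tau)$ in which $A$ becomes $\tau'$-clopen on the relevant set, so $\chi_A\in{\mathfrak{B}}_1(\tau')$. In this $\tau'$ the oscillation derivation still must climb through the Cantor--Bendixson tower of $S$ (which is $\tau$-closed, hence $\tau'$-closed, and whose relative topology is unchanged by adding the level sets), giving $\beta_{\tau'}(\chi_A)$ of the order of $\zeta$ plus a bounded correction; a careful bookkeeping, as in the proof of Proposition \ref{p2}, yields $\beta_{\tau'}(\chi_A)\le\zeta+2$. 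The "$+2$" slack absorbs the at-most-two extra derivation steps coming from the boundary behavior at the top level $S_\zeta$ and from the distinction between $D^0$ and $D^1$.

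The lower bound $\beta_\xi^\ast(\chi_A)>\zeta$ is the crux. Here one must show that \emph{no} Polish refinement $\tau''$ with $\tau\subseteq\tau''\subseteq\Sigma^0_\xi(\tau)$ and $\chi_A\in{\mathfrak{B}}_1(\tau'')$ can reduce the rank to $\le\zeta$. The idea is that refining the topology within $\Sigma^0_\xi(\tau)$ cannot destroy the Cantor--Bendixson structure of the $\tau$-closed scattered set $S$: $S$ remains $\tau''$-closed, and a Baire-category / localization argument (in the spirit of Lemma \ref{l4}) shows that on a comeager-in-$S$ or on a suitably chosen relatively closed portion, the $\tau''$-oscillation derivation of $\chi_A$ restricted to $S$ still dominates the Cantor--Bendixson derivative. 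One then argues that $D^\zeta_{\tau''}(\chi_A,1,X)\cap S\neq\emptyset$: the point surviving $\zeta$ Cantor--Bendixson derivations of $S$ must also survive $\zeta$ oscillation derivations, because in any neighborhood of it, both an "$A$-point" and a "non-$A$-point" from the previous derived level persist by construction. This forces $\beta_{\tau''}(\chi_A,1)\ge\zeta+1>\zeta$ for every admissible $\tau''$, hence $\beta_\xi^\ast(\chi_A)>\zeta$. The main obstacle, and the step I expect to require the most care, is verifying this stability: one must rule out the possibility that a clever refinement within $\Sigma^0_\xi(\tau)$ "separates" the $A$ and non-$A$ parts at some level prematurely — this is precisely where the constraint $\tau''\subseteq\Sigma^0_\xi(\tau)$ (rather than an arbitrary Polish refinement) does the work, and one likely needs a result from \cite{EKV} bounding how much such refinements can lower $\beta$, or a direct argument that the level sets of $\chi_A$ cannot all be simultaneously $\Delta^0_1(\tau'')$ without collapsing the ambient $\Sigma^0_\xi(\tau)$ hierarchy.
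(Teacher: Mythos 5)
Your lower bound argument fails, and it fails for a structural reason that is precisely the difficulty of this proposition. If $A$ is carved out of a countable $\tau$-closed scattered set $S$ (or, more generally, if $A\in\Delta_{2}^{0}(\tau)$ --- which is automatic for a subset of a countable closed set, since both $A$ and $A^{c}$ are then $F_{\sigma}$), then by Kuratowski's theorem \cite[Theorem 22.18]{K}, the same result that underlies Lemma \ref{l1}, there is a Polish refinement $\tau''$ with $\tau\subseteq\tau''\subseteq\Sigma_{2}^{0}(\tau)\subseteq\Sigma_{\xi}^{0}(\tau)$ in which $A$ (indeed every singleton of $S$) is clopen. In that topology $\chi_{A}$ is continuous, so $\beta_{\tau''}(\chi_{A})=1$ and hence $\beta_{\xi}^{\ast}(\chi_{A})=1\leq\zeta$. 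Your key claim that ``refining the topology within $\Sigma_{\xi}^{0}(\tau)$ cannot destroy the Cantor--Bendixson structure of $S$'' is simply false for $\xi\geq2$: the refinement makes $S$ discrete, and the constraint $\tau''\subseteq\Sigma_{\xi}^{0}(\tau)$ offers no protection, because all the level sets of your construction lie in $\Delta_{2}^{0}(\tau)$. There is no Baire-category or localization argument in the spirit of Lemma \ref{l4} that can rescue this; any witness of $\beta_{\xi}^{\ast}(\chi_{A})>\zeta$ must be descriptively complicated enough that \emph{no} admissible refinement trivializes it, and producing such a set is a genuinely nontrivial matter that your sketch does not address.

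This is exactly why the paper does not attempt an explicit Cantor--Bendixson-type construction. It instead quotes \cite[Theorem 5.20]{EKV}, which asserts that $\beta_{\xi}^{\ast}$ takes arbitrarily large countable values on characteristic functions in ${\mathfrak{B}}_{\xi}(\tau)$, and then defines $\alpha$ to be the \emph{least} value exceeding $\zeta$ that is attained by some $\chi_{A}$. The content of the proof is the upper bound $\alpha\leq\zeta+2$, obtained by a minimality argument: fixing $\tau'$ with $\beta_{\tau'}(\chi_{A})=\alpha$, one localizes on the open sets $G_{n}=D_{\tau'}^{\eta_{n}}(\chi_{A},1,X)^{c}$ via Lemma \ref{l4}, observes that $\beta_{\tau'}(\chi_{A\cap G_{n}})\leq\eta_{n}+1<\alpha$ so that minimality gives $\beta_{\xi}^{\ast}(\chi_{A\cap G_{n}})\leq\zeta$, amalgamates the witnessing topologies through Lemma \ref{l1}, deduces $D_{\tau''}^{\zeta}(\chi_{A},1,X)\subseteq D_{\tau'}^{\eta}(\chi_{A},1,X)$ with $\eta=\sup\eta_{n}$, and finishes with a three-case analysis on the form of $\alpha$ (limit; successor of a limit; $\eta+2$), ruling out the limit case and yielding $\alpha\leq\zeta+2$. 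Your proposal supplies neither a usable lower-bound mechanism nor a substitute for the appeal to \cite[Theorem 5.20]{EKV}, so the gap is essential.
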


\begin{proof}
By \cite[Theorem 5.20]{EKV}, $\beta^{*}_{\xi}$ can take arbitrarily large
values in the set of countable ordinals on characteristic functions in
${\mathfrak{B}}_{\xi}(\tau)$. Let $\alpha$ be the smallest ordinal $>\zeta$
such that there exists $\chi_{A}\in{\mathfrak{B}}_{\xi}(\tau)$ with
$\beta_{\xi}^{\ast}(\chi_{A})=\alpha$. Note that $\alpha$ is at least $2$.
Choose a Polish topology $\tau^{\prime}\subseteq\Sigma_{\xi}^{0}(\tau)$
containing $\tau$ such that $\beta_{\tau^{\prime}}(\chi_{A})=\alpha$.

Let $(\eta_{n})$ be a (not necessarily strictly) increasing sequence of
ordinals so that $\eta_{n}+1<\alpha$ for all $n$ and let $\eta=\sup\eta_{n}$.
Apply Lemma \ref{l4} with $G=G_{n}=D_{\tau^{\prime}}^{\eta_{n}}(\chi
_{A},1,X)^{c}$ and $\tau^{\prime\prime}=\tau^{\prime}$ to obtain that
\[
G_{n}\cap D_{\tau^{\prime}}^{\eta_{n}}(\chi_{A\cap G_{n}},1,X)=G_{n}\cap
D_{\tau^{\prime}}^{\eta_{n}}(\chi_{A},1,X)=\emptyset.
\]
Hence $D_{\tau^{\prime}}^{\eta_{n}}(\chi_{A\cap G_{n}},1,X)\subseteq G_{n}%
^{c}$. Since $\chi_{A\cap G_{n}}=0$ on $G_{n}^{c}$, $\beta_{\tau^{\prime}%
}(\chi_{A\cap G_{n}})\leq\eta_{n}+1<\alpha$. By choice of $\alpha$,
$\beta_{\xi}^{\ast}(\chi_{A\cap G_{n}})\leq\zeta$. Let $\tau_{n}%
\subseteq\Sigma_{\xi}^{0}(\tau)$ be a Polish topology containing $\tau$ such
that $\beta_{\tau_{n}}(\chi_{A\cap G_{n}})\leq\zeta$. There is a countable
sequence of sets in $\Sigma_{\xi}^{0}(\tau)$ that includes a basis for each of
the topologies $\tau^{\prime}$ and $\tau_{n}$, $n\in{\mathbb{N}}$. By Lemma
\ref{l1}, there is a Polish topology $\tau^{\prime\prime}\subseteq\Sigma_{\xi
}^{0}(\tau)$ that contains $\tau^{\prime}$ and $\tau_{n}$ for all
$n\in{\mathbb{N}}$. Applying Lemma \ref{l4} with $G=G_{n}$, we see that
\[
G_{n}\cap D_{\tau^{\prime\prime}}^{\zeta}(\chi_{A},1,X)\subseteq
D_{\tau^{\prime\prime}}^{\zeta}(\chi_{A\cap G_{n}},1,X)\subseteq D_{\tau_{n}%
}^{\zeta}(\chi_{A\cap G_{n}},1,X)=\emptyset.
\]
Therefore,
\begin{equation}
D_{\tau^{\prime\prime}}^{\zeta}(\chi_{A},1,X)\subseteq\cap G_{n}^{c}%
=D_{\tau^{\prime}}^{\eta}(\chi_{A},1,X). \label{e3.1}%
\end{equation}
For the rest of the proof, we consider three cases.

\medskip

\noindent\underline{Case 1}. $\alpha$ is a limit ordinal.

\medskip

\noindent Choose a sequence $(\eta_{n})$ strictly increasing to $\alpha$.
Obviously, $\eta_{n}+1 < \alpha$ for all $n$. By (\ref{e3.1}), $D_{\tau
^{\prime\prime}}^{\zeta}(\chi_{A},1,X)\subseteq D^{\alpha}_{\tau^{\prime}%
}(\chi_{A},1,X)=\emptyset$. This implies that $\alpha=\beta_{\xi}^{\ast}%
(\chi_{A})\leq\zeta$, which is impossible.

\medskip

\noindent\underline{Case 2}. $\alpha=\eta+1$ for a limit ordinal $\eta$.

\medskip

\noindent Choose a sequence $(\eta_{n})$ strictly increasing to $\eta$. By
(\ref{e3.1}), $D_{\tau^{\prime\prime}}^{\zeta}(\chi_{A},1,X)\subseteq
P=D_{\tau^{\prime}}^{\eta}(\chi_{A},1,X)$. Hence
\[
D_{\tau^{\prime\prime}}^{\zeta+1}(\chi_{A},1,X)\subseteq D_{\tau^{\prime
\prime}}(\chi_{A},1,P)\subseteq D_{\tau^{\prime}}(\chi_{A},1,P)=D_{\tau
^{\prime}}^{\alpha}(\chi_{A},1,X)=\emptyset.
\]
Therefore, $\alpha=\beta_{\xi}^{\ast}(\chi_{A})\leq\zeta+1$.

\medskip

\noindent\underline{Case 3}. $\alpha=\eta+2$ for some ordinal $\eta$.

\medskip

\noindent Take $\eta_{n}=\eta$ for all $n$. By (\ref{e3.1}), $D_{\tau
^{\prime\prime}}^{\zeta}(\chi_{A},1,X)\subseteq P=D_{\tau^{\prime}}^{\eta
}(\chi_{A},1,X)$. Hence
\[
D_{\tau^{\prime\prime}}^{\zeta+2}(\chi_{A},1,X)\subseteq D_{\tau^{\prime
\prime}}^{2}(\chi_{A},1,P)\subseteq D_{\tau^{\prime}}^{2}(\chi_{A}%
,1,P)=D_{\tau^{\prime}}^{\alpha}(\chi_{A},1,X)=\emptyset.
\]
Therefore, $\alpha=\beta_{\xi}^{\ast}(\chi_{A})\leq\zeta+2$.
\end{proof}

Let $F$ be a closed set in $(X,\tau)$. If $\tau^{\prime}$ is a topology on
$X$, let $\tau_{F}^{\prime}=\{O\cap F:O\in\tau^{\prime}\}$ be the subspace
topology on $F$ induced by $\tau^{\prime}$. Suppose that $f\in{\mathfrak{B}%
}_{\xi}(X,\tau)$. Then $f_{|F}\in\mathfrak{B}_{\xi}(F,\tau_{F})$. Let
$\beta_{\xi,F}^{\ast}(f_{|F})$ denote the $\beta_{\xi}^{\ast}$-rank of
$f_{|F}$.

\begin{lem}
\label{l5}Let $F$ be a closed set in $X$ and let $A$ be a subset of $F$ so
that $\chi_{A}\in\mathfrak{B}_{\xi}(F)$. Then
\[
\beta_{\xi,F}^{\ast}(\chi_{A|F})\leq\beta_{\xi}^{\ast}(\chi_{A})\leq
1+\beta_{\xi,F}^{\ast}(\chi_{A|F}).
\]

\end{lem}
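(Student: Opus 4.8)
The plan is to prove the two inequalities separately by relating Polish refinements of $(X,\tau)$ with Polish refinements of $(F,\tau_F)$. For the left inequality $\beta_{\xi,F}^{\ast}(\chi_{A|F})\leq\beta_{\xi}^{\ast}(\chi_A)$, start with a Polish topology $\tau'$ on $X$ with $\tau\subseteq\tau'\subseteq\Sigma_\xi^0(\tau)$, $\chi_A\in\mathfrak{B}_1(\tau')$, and $\beta_{\tau'}(\chi_A)=\beta_\xi^{\ast}(\chi_A)$. Restrict to $F$: I would check that $\tau'_F$ is Polish (a $G_\delta$, in fact closed, subset of a Polish space with the relative topology is Polish — here $F$ is $\tau$-closed, hence $\tau'$-closed since $\tau\subseteq\tau'$), that $\tau_F\subseteq\tau'_F\subseteq\Sigma_\xi^0(\tau_F)$ (intersecting a $\Sigma_\xi^0(\tau)$ set with $F$ lands in $\Sigma_\xi^0(\tau_F)$, by induction on $\xi$), and that $\chi_{A|F}\in\mathfrak{B}_1(F,\tau'_F)$. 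Then $\beta_{\xi,F}^{\ast}(\chi_{A|F})\leq\beta_{\tau'_F}(\chi_{A|F})$, and I claim $\beta_{\tau'_F}(\chi_{A|F})\leq\beta_{\tau'}(\chi_A)$: indeed, since $F$ is $\tau'$-closed and the derivation $D^1(\chi_A,1,\cdot)$ restricted to subsets of $F$ with the relative topology agrees with $D^1(\chi_{A|F},1,\cdot)$ computed in $(F,\tau'_F)$, one shows by transfinite induction that $D^{\rho}_{\tau'_F}(\chi_{A|F},1,F)=D^{\rho}_{\tau'}(\chi_A,1,X)\cap F\subseteq D^{\rho}_{\tau'}(\chi_A,1,X)$, so the former vanishes no later than the latter.

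For the right inequality $\beta_\xi^{\ast}(\chi_A)\leq 1+\beta_{\xi,F}^{\ast}(\chi_{A|F})$, go the other way. Start with a Polish topology $\sigma$ on $F$ with $\tau_F\subseteq\sigma\subseteq\Sigma_\xi^0(\tau_F)$, $\chi_{A|F}\in\mathfrak{B}_1(F,\sigma)$, and $\beta_\sigma(\chi_{A|F})=\beta_{\xi,F}^{\ast}(\chi_{A|F})$. The key construction is to build a Polish topology $\tau'$ on all of $X$ that restricts to $\sigma$ on $F$, leaves $X\setminus F$ essentially alone, and still sits inside $\Sigma_\xi^0(\tau)$. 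Concretely: a countable basis of $\sigma$ consists of sets of the form $B_n\cap F$ with $B_n\in\Sigma_\xi^0(\tau)$ (each such basic $\sigma$-open set, being in $\Sigma_\xi^0(\tau_F)$, extends to a $\Sigma_\xi^0(\tau)$ set of $X$); also $F\in\Pi_1^0(\tau)\subseteq\Delta_\xi^0(\tau)$ and $X\setminus F\in\tau$. Apply Lemma \ref{l1} (with $\tau'_0=\tau$ and the sequence $(G_n)$ consisting of the $B_n$ together with $F$ and $X\setminus F$) to get a Polish $\tau'\subseteq\Sigma_\xi^0(\tau)$ containing all of these. Then $F$ is $\tau'$-clopen, $\tau'_F\supseteq\sigma$, and in fact one can arrange (or simply note it suffices that) $\tau'_F$ has the same Borel structure so that $\chi_{A|F}\in\mathfrak{B}_1(F,\tau'_F)$ and $\beta_{\tau'_F}(\chi_{A|F})\leq\beta_\sigma(\chi_{A|F})$ — if $\tau'_F\supsetneq\sigma$ strictly the rank could only drop, which is fine for an upper bound; alternatively, add the complements of $\sigma$-basic sets to the list fed to Lemma \ref{l1} so that $\tau'_F=\sigma$ exactly. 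On $X\setminus F$, $\chi_A$ is a Baire class $\xi$ function of $(X,\tau)$, hence Baire class $1$ in some refinement; but we want a single refinement, so instead argue directly about ranks.

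The rank estimate is then the heart: with $F$ being $\tau'$-clopen, $X=F\sqcup(X\setminus F)$ splits $X$ topologically, so for the derivation $D^1(\chi_A,1,\cdot)$ in the topology $\tau'$ (extended if necessary so that $\chi_A\in\mathfrak{B}_1(\tau')$; if on $X\setminus F$ we cannot make $\chi_A$ Baire-$1$ we first refine $\tau'$ there using $T_{\chi_A,\xi}\neq\emptyset$ and Lemma \ref{l1} to merge, keeping $F$ clopen) one gets $D^{\rho}_{\tau'}(\chi_A,1,X)\cap F=D^{\rho}_{\tau'_F}(\chi_{A|F},1,F)$ because near points of the clopen set $F$ only points of $F$ are relevant. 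Hence $D^{\rho}_{\tau'}(\chi_A,1,X)\subseteq(X\setminus F)\cup D^{\rho}_{\tau'_F}(\chi_{A|F},1,F)$, and once $\rho=\beta_{\tau'_F}(\chi_{A|F})=:\beta_0$ we get $D^{\beta_0}_{\tau'}(\chi_A,1,X)\subseteq X\setminus F$, a set on which $\chi_A$ is (in the refined topology) continuous, so one more derivation kills it: $D^{\beta_0+1}_{\tau'}(\chi_A,1,X)=\emptyset$. Since the same works for every $\varepsilon\le 1$ threshold (and for $\varepsilon>1$ the characteristic function oscillates by less than $\varepsilon$, so rank $\le 1$), we conclude $\beta_{\tau'}(\chi_A)\leq 1+\beta_0\leq 1+\beta_{\xi,F}^{\ast}(\chi_{A|F})$, whence $\beta_\xi^{\ast}(\chi_A)\leq 1+\beta_{\xi,F}^{\ast}(\chi_{A|F})$.

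The main obstacle I expect is bookkeeping the topology on $X\setminus F$: we need one Polish refinement $\tau'\subseteq\Sigma_\xi^0(\tau)$ in which simultaneously (i) $F$ is clopen, (ii) $\tau'_F$ recovers (or refines past) $\sigma$, and (iii) $\chi_A$ is Baire class $1$ globally — not just on $F$. This requires combining the chosen $\sigma$-basis on $F$ with a witness topology for $\chi_A$ on $X\setminus F$ (whose existence comes from $T_{\chi_A,\xi}\neq\emptyset$ via \cite[5.2]{EKV}), glued through Lemma \ref{l1}; one must check the glued topology still has $F$ clopen (which holds because $F,X\setminus F$ are among the generators) and still lies in $\Sigma_\xi^0(\tau)$. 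Everything else is routine transfinite induction on the derivation, of exactly the type carried out in Lemmas \ref{l4} and \ref{5}.
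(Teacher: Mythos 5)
Your first inequality is argued exactly as in the paper (restrict the optimal topology $\tau'$ to $F$, check $\tau'_F\in T_{\chi_{A|F},\xi}$, and compare derived sets), so that half is fine. In the second half your construction is workable, but the final ordinal step as written does not give the stated bound: from $D^{\beta_0}_{\tau'}(\chi_A,1,X)\subseteq X\setminus F$ you deduce $D^{\beta_0+1}_{\tau'}(\chi_A,1,X)=\emptyset$, i.e.\ $\beta_{\tau'}(\chi_A)\le\beta_0+1$, and then assert $\beta_{\tau'}(\chi_A)\le 1+\beta_0$. For infinite $\beta_0$ one has $\beta_0+1>1+\beta_0$, so this inference is backwards; the whole point of the lemma's ``$1+\beta$'' is that the extra derivation must be spent \emph{first}, not last. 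The repair lies inside your own setup: since $A\subseteq F$ and $X\setminus F$ is $\tau'$-open with $\chi_A\equiv 0$ there, already $D^{1}_{\tau'}(\chi_A,\varepsilon,X)\subseteq F$, hence $D^{1+\gamma}_{\tau'}(\chi_A,\varepsilon,X)\subseteq D^{\gamma}_{\tau'_F}(\chi_{A|F},\varepsilon,F)$ for every $\gamma$, which yields $\beta_{\tau'}(\chi_A)\le 1+\beta_0$ (this is precisely the paper's ordering of the two effects; with your clopen $F$ one even gets $D^{\beta_0}_{\tau'}(\chi_A,\varepsilon,X)=\emptyset$ once $\beta_0\ge 1$).

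Beyond that, most of the machinery you agonize over is unnecessary. The paper neither makes $F$ clopen nor extends the $\sigma$-basic sets: since $F$ is $\tau$-closed, $\Pi^0_\zeta(\tau_F)\subseteq\Pi^0_\zeta(\tau)$ for all $\zeta$, hence $\Sigma^0_\xi(\tau_F)\subseteq\Sigma^0_\xi(\tau)$, so a countable basis of $\sigma$ already consists of sets of $\Sigma^0_\xi(\tau)$ and can be fed directly into Lemma \ref{l1} together with $\tau$. Likewise the ``main obstacle'' you describe — arranging $\chi_A$ to be Baire class $1$ on $X\setminus F$ — is vacuous: $A\subseteq F$ forces $\chi_A\equiv 0$ off $F$, so no witness topology is needed there, and the countability of $\beta_{\tau'}(\chi_A)$ (or the decomposition of $X$ into the closed set $F$ and its open complement) certifies $\chi_A\in\mathfrak{B}_1(\tau')$, i.e.\ $\tau'\in T_{\chi_A,\xi}$. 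Finally, note that declaring $F$ open requires $F\in\Sigma^0_\xi(\tau)$, hence $\xi\ge 2$; this is harmless under the section's standing hypothesis, but the paper's argument, which only uses that $F^c$ is open and $\chi_A$ vanishes there, avoids even this.
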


\begin{proof}
Let $\tau^{\prime}\subseteq$ $\Sigma_{\xi}^{0}(\tau)$ be a Polish topology
containing $\tau$ such that $\beta_{\xi}^{\ast}(\chi_{A})=\beta_{\tau^{\prime
}}(\chi_{A})$. Let $V$ be a $\tau_{F}^{\prime}$-closed set. Then $V$ is
$\tau^{\prime}$-closed and for any $\varepsilon>0$, $D_{\tau_{F}^{\prime}%
}(\chi_{A|F},\varepsilon,V)\subseteq D_{\tau^{\prime}}(\chi_{A},\varepsilon
,V)$. Hence
\[
\beta_{\xi,F}^{\ast}(\chi_{A|F})\leq\beta_{\tau_{F}^{\prime}}(\chi_{A|F}%
)\leq\beta_{\tau^{\prime}}(\chi_{A})=\beta_{\xi}^{\ast}(\chi_{A}).
\]

Conversely, let $\tilde{\tau}\subseteq$ $\Sigma_{\xi}^{0}(\tau_{F})$ be a
Polish topology on $F$ containing $\tau_{F}$ such that $\beta_{\xi,F}^{\ast
}(\chi_{A|F})=\beta_{\tilde{\tau}}(\chi_{A|F}).$ Since $F$ is $\tau$-closed,
it is easy to verify by induction that $\Pi_{\zeta}^{0}(\tau_{F})\subseteq
\Pi_{\zeta}^{0}(\tau)$ for all $\zeta<\omega_{1}$. It follows that
$\Sigma_{\xi}^{0}(\tau_{F})\subseteq\Sigma_{\xi}^{0}(\tau).$ Let $(B_{n})$ be
a countable basis for $\tilde{\tau}.$ Then $(B_{n})\subseteq\Sigma_{\xi}%
^{0}(\tau)$. By Lemma \ref{l1}, there exists a Polish topology $\tau
^{\prime\prime}$ on $X$ containing $\tau\cup(B_{n})$ such that $\tau
^{\prime\prime}\subseteq\Sigma_{\xi}^{0}(\tau).$ Clearly $B_{n}\in\tau
_{F}^{\prime\prime}$ for all $n$ and hence $\tilde{\tau}\subseteq\tau
_{F}^{\prime\prime}$. Also, since $F^{c}\in\tau\subseteq\tau^{\prime\prime}$
and $\chi_{A}=0$ on $F^{c}$, $D_{\tau^{\prime\prime}}(\chi_{A},\varepsilon
,X)\subseteq F$. Thus, for all $\gamma<\omega_{1}$,
\[
D_{\tau^{\prime\prime}}^{1+\gamma}(\chi_{A},\varepsilon,X)\subseteq
D_{\tau^{\prime\prime}}^{\gamma}(\chi_{A},\varepsilon,F)=D_{\tau_{F}%
^{\prime\prime}}^{\gamma}(\chi_{A|F},\varepsilon,F)\subseteq D_{\tilde{\tau}%
}^{\gamma}(\chi_{A|F},\varepsilon,F).
\]
Hence $\beta_{\tau^{\prime\prime}}(\chi_{A})\leq1+\beta_{\tilde{\tau}}%
(\chi_{A|F})$ and thus $\beta_{\xi}^{\ast}(\chi_{A})\leq1+\beta_{\xi,F}^{\ast
}(\chi_{A|F})$.
\end{proof}

\begin{prop}
\label{p6}Let $(X,\tau)$ be an uncountable Polish space and $\xi\geq1$. Then
for all limit ordinals $\zeta<\omega_{1},$ there exists $f\in{\mathfrak{B}%
}_{\xi}(\tau)$ and that $\beta_{\xi}^{\ast}(f)=\zeta$.
\end{prop}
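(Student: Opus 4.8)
The plan is to build $f$ by ``stacking'' the high-rank characteristic functions of Proposition \ref{p5} along a sequence of disjoint Cantor sets accumulating at a single point, with geometrically decaying amplitudes so that $f$ is continuous there. First I would fix a strictly increasing sequence $(\zeta_n)_{n\ge 0}$ of nonzero countable ordinals with $\sup_n\zeta_n=\zeta$ (possible since $\zeta$ is limit), and, using that $(X,\tau)$ is uncountable Polish, a $\tau$-closed homeomorphic copy $C$ of $2^{\mathbb{N}}$; transporting the clopen partition $2^{\mathbb{N}}=\{0^\infty\}\sqcup\bigsqcup_{n\ge 0}[0^n1]$ I obtain a point $x_0\in C$ and compact (hence $\tau$-closed) sets $C_n\subseteq C$, each homeomorphic to $2^{\mathbb{N}}$, with $C=\{x_0\}\sqcup\bigsqcup_nC_n$ and $C_n\to x_0$, together with $\tau$-open sets $O_n$ satisfying $O_n\cap C=C_n$ (so $O_n$ misses $x_0$ and every $C_m$, $m\ne n$). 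Applying Proposition \ref{p5} in the uncountable Polish space $(C_n,\tau_{C_n})$ with the ordinal $\zeta_n$ gives $A_n\subseteq C_n$ with $\chi_{A_n|C_n}\in\mathfrak{B}_\xi(C_n,\tau_{C_n})$ and $\zeta_n<\beta^*_{\xi,C_n}(\chi_{A_n|C_n})\le\zeta_n+2$; viewing $A_n\subseteq X$ and extending $\chi_{A_n|C_n}$ by $0$, a routine relativization argument (using that $C_n$ is $\tau$-closed, so $\Sigma^0_\eta(\tau_{C_n})\subseteq\Sigma^0_\eta(\tau)$ for $\eta\ge 2$) yields $\chi_{A_n}\in\mathfrak{B}_\xi(X,\tau)$. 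I then put $f=\sum_{n\ge 0}2^{-n}\chi_{A_n}$, which converges uniformly and has pairwise disjoint supports, so $f\in\mathfrak{B}_\xi(X,\tau)$, with $f|_{C_n}=2^{-n}\chi_{A_n|C_n}$, $f\equiv 0$ off $C$, $f(x_0)=0$, and $f$ $\tau$-continuous at $x_0$ (separate $x_0$ from the finitely many compact sets $C_0,\dots,C_{N-1}$).

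For the lower bound I would show $\beta^*_\xi(f)>\zeta_n$ for every $n$. Given any $\tau'\in T_{f,\xi}$, the set $C_n$ is $\tau'$-closed, $\tau'_{C_n}$ is Polish with $\tau_{C_n}\subseteq\tau'_{C_n}\subseteq\Sigma^0_\xi(\tau_{C_n})$, and $\chi_{A_n|C_n}=2^nf|_{C_n}\in\mathfrak{B}_1(\tau'_{C_n})$, so $\tau'_{C_n}$ is an admissible topology for $\beta^*_{\xi,C_n}(\chi_{A_n|C_n})$. Using that $\beta$ is invariant under multiplication by a positive constant, and that passing to a closed subspace does not increase $\beta$ (indeed $D^\rho_{\tau'_{C_n}}(g|_{C_n},\varepsilon,C_n)=D^\rho_{\tau'}(g,\varepsilon,C_n)\subseteq D^\rho_{\tau'}(g,\varepsilon,X)$ for all $\rho$), this gives $\beta_{\tau'}(f)\ge\beta_{\tau'_{C_n}}(f|_{C_n})=\beta_{\tau'_{C_n}}(\chi_{A_n|C_n})\ge\beta^*_{\xi,C_n}(\chi_{A_n|C_n})>\zeta_n$. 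Since $\tau'$ was arbitrary, $\beta^*_\xi(f)>\zeta_n$; letting $n\to\infty$, $\beta^*_\xi(f)\ge\sup_n(\zeta_n+1)=\zeta$.

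For the upper bound I would exhibit one admissible topology $\tau''$ with $\beta_{\tau''}(f)\le\zeta$. Pick $\tilde\tau_n\in T_{\chi_{A_n|C_n},\xi}$ realizing $\beta_{\tilde\tau_n}(\chi_{A_n|C_n})\le\zeta_n+2$ (when $\xi=1$, necessarily $\tilde\tau_n=\tau_{C_n}$); since $\tilde\tau_n\subseteq\Sigma^0_\xi(\tau_{C_n})\subseteq\Sigma^0_\xi(\tau)$, Lemma \ref{l1} furnishes a Polish $\tau''\subseteq\Sigma^0_\xi(\tau)$ containing $\tau$ and a basis of each $\tilde\tau_n$ (take $\tau''=\tau$ if $\xi=1$), so that $\tilde\tau_n\subseteq\tau''_{C_n}$; a relativization argument then gives $\chi_{A_n}\in\mathfrak{B}_1(\tau'')$, hence $f\in\mathfrak{B}_1(\tau'')$ by uniform convergence, so $\tau''\in T_{f,\xi}$, and $f$ is $\tau''$-continuous at $x_0$. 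Fix $\varepsilon>0$ and $N$ with $2^{-n}<\varepsilon$ for $n>N$: continuity at $x_0$, vanishing on the open set $X\setminus C$, and the oscillation of $f$ on $O_n$ being $\le 2^{-n}<\varepsilon$ for $n>N$, together show $D^1_{\tau''}(f,\varepsilon,X)\subseteq\bigcup_{n\le N}C_n$. A transfinite induction on $\rho$ then gives $D^{1+\rho}_{\tau''}(f,\varepsilon,X)\cap C_n\subseteq D^\rho_{\tau''_{C_n}}(f|_{C_n},\varepsilon,C_n)$ for $n\le N$. Since $\tilde\tau_n\subseteq\tau''_{C_n}$ and refinement of the topology does not increase $\beta$,
\[
\beta_{\tau''_{C_n}}(f|_{C_n},\varepsilon)=\beta_{\tau''_{C_n}}(\chi_{A_n|C_n},2^n\varepsilon)\le\beta_{\tau''_{C_n}}(\chi_{A_n|C_n})\le\beta_{\tilde\tau_n}(\chi_{A_n|C_n})\le\zeta_n+2,
\]
so taking $\rho=\zeta_N+2$ annihilates every piece, whence $D^{1+\zeta_N+2}_{\tau''}(f,\varepsilon,X)=\emptyset$ and $\beta_{\tau''}(f,\varepsilon)\le 1+\zeta_N+2$. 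Because $\zeta$ is limit, $1+\zeta_m+2<\zeta$ for every $m$, so $\beta_{\tau''}(f)=\sup_{\varepsilon>0}\beta_{\tau''}(f,\varepsilon)\le\sup_m(1+\zeta_m+2)=\zeta$; hence $\beta^*_\xi(f)\le\zeta$, and combined with the lower bound, $\beta^*_\xi(f)=\zeta$.

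The step I expect to require the most care is the transfinite induction in the upper bound: one must verify that intersecting the ambient derived set $D^{1+\rho}_{\tau''}(f,\varepsilon,X)$ with $C_n$ collapses to the intrinsic derived set of $f|_{C_n}$ in $(C_n,\tau''_{C_n})$ — this is where the pairwise disjointness of the $O_n$, the inclusion $C_n\subseteq O_n$, and the already-established containment $D^1_{\tau''}(f,\varepsilon,X)\subseteq\bigcup_{n\le N}C_n$ are all used, and where any boundary of $C_n$ approached from outside $C$ is seen to be irrelevant (it can only perturb $D^1$, after which one is inside $\bigcup_{n\le N}C_n$). The remaining ingredients are routine: the relativization statements for the classes $\Sigma^0_\eta$ and for Baire classes relative to the closed sets $C_n$; the ordinal arithmetic $\sup_m(1+\zeta_m+2)=\zeta$, valid precisely because $\zeta$ is a limit; and the bookkeeping that keeps the refinement-free case $\xi=1$ (where $T_{f,1}=\{\tau\}$) in step with the general case by simply taking $\tau''=\tau$ throughout.
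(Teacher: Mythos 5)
Your construction is correct, and its skeleton is the same as the paper's: take closed uncountable pieces carrying the Proposition \ref{p5} witnesses for a sequence $\zeta_n\nearrow\zeta$, form $f$ as a sum with coefficients tending to $0$, build one common refinement $\tau''$ with Lemma \ref{l1}, and for each $\varepsilon$ localize the derived sets of $f$ to finitely many pieces to get $\beta_{\tau''}(f,\varepsilon)$ strictly below the limit $\zeta$. The genuine differences are in the details. Geometrically, the paper places the closed sets $F_n$ inside pairwise disjoint open sets $U_n$, whereas you take relatively clopen pieces $C_n$ of a Cantor copy accumulating at one point $x_0$ and use geometric weights so that $f$ is continuous at $x_0$; this makes the first derivation step ($D^1_{\tau''}(f,\varepsilon,X)\subseteq\bigcup_{n\le N}C_n$) particularly clean, while the paper instead absorbs the closure of $\bigcup F_n$ into the estimate $3+\overline\zeta+2$. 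More substantively, your lower bound avoids the two ingredients the paper relies on, namely \cite[Theorem 5.10]{EKV} (via $1+\beta^*_\xi(f)\ge\beta^*_\xi(f\chi_{F_n})$) and Lemma \ref{l5}: you argue directly that for every $\tau'\in T_{f,\xi}$ the trace topology $\tau'_{C_n}$ is admissible for $\chi_{A_n|C_n}$ on $(C_n,\tau_{C_n})$ and that derived sets only shrink under restriction to a $\tau'$-closed set, so $\beta_{\tau'}(f)\ge\beta^*_{\xi,C_n}(\chi_{A_n|C_n})>\zeta_n$. This is essentially the easy half of Lemma \ref{l5} redone for $f$ itself; it buys you a self-contained argument at the cost of carrying the relativization bookkeeping ($\Sigma^0_\xi(\tau)$ traces to $\Sigma^0_\xi(\tau_{C_n})$, and $\Sigma^0_\xi(\tau_{C_n})\subseteq\Sigma^0_\xi(\tau)$ for closed $C_n$) explicitly, which you correctly flag and which is at the same level of routine verification as in the paper's Lemma \ref{l5}. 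The upper-bound induction, the use of refinement monotonicity of $\beta$, and the ordinal estimate $1+\zeta_N+2<\zeta$ are all sound, and your handling of $\xi=1$ (taking $\tau''=\tau$, $T_{f,1}=\{\tau\}$) matches the paper's convention.
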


\begin{proof}
Let $\left(  U_{n}\right)  $ be a sequence of pairwise disjoint uncountable
open sets. For each $n$, let $F_{n}$ be an uncountable closed set in $X$
contained in $U_{n}$. Let $\zeta_{n}$ be a sequence of ordinals strictly
increasing to $\zeta.$ Applying Proposition \ref{p5} to $F_{n},$ we obtain for
all $n,$ a set $A_{n}\subseteq F_{n}$ such that $\chi_{A_{n}|F_{n}}%
\in{\mathfrak{B}}_{\xi}(\tau_{F_{n}})$ and $\zeta_{n}<\beta_{\xi,F_{n}}^{\ast
}(\chi_{A_{n}|F_{n}})\leq\zeta_{n}+2.$ By the lemma, $\zeta_{n}<\beta_{\xi
}^{\ast}(\chi_{A_{n}})\leq1+\zeta_{n}+2.$ For each $n,$ choose a Polish
topology $\tau_{n}^{\prime}\subseteq\Sigma_{\xi}^{0}(\tau)$ containing $\tau$
such that $\beta_{\tau_{n}^{\prime}}(\chi_{A_{n}})=\beta_{\xi}^{\ast}%
(\chi_{A_{n}}).$ Let $\tau^{\prime\prime}\subseteq\Sigma_{\xi}^{0}(\tau)$ be a
Polish topology containing all $\tau_{n}^{\prime}$ and let $f=\sum\frac
{\chi_{A_{n}}}{n}.$ By \cite[Theorem 5.10]{EKV}$,$
\[
1+\beta_{\xi}^{\ast}(f)\geq\beta_{\xi}^{\ast}(f\chi_{F_{n}})=\beta_{\xi}%
^{\ast}(\frac{1}{n}\chi_{A_{n}})=\beta_{\xi}^{\ast}(\chi_{A_{n}})\text{ for
all }n.
\]
Therefore, $\beta_{\xi}^{\ast}(f)\geq\zeta.$ On the other hand, for all
$\varepsilon>0,$
\[
D_{\tau^{\prime\prime}}(f,\varepsilon,X)\cap U_{n}\subseteq D_{\tau
_{n}^{\prime\prime}}(\frac{1}{n}\chi_{A_{n}},\varepsilon,X)\cap U_{n}%
=\emptyset\text{ if }n>\frac{1}{\varepsilon}.
\]
It follows that
\[
D_{\tau^{\prime\prime}}(f,\varepsilon,X)\subseteq(\cup_{n>1/\varepsilon}%
U_{n})^{c}=F.
\]
Let $\overline{\zeta}=\max\{\zeta_{n}:n\leq1/\varepsilon\}$. Since
$f=\sum_{n\leq1/\varepsilon}\frac{\chi_{A_{n}}}{n}$ on $F$ and $\cup
_{n\leq\frac{1}{\varepsilon}}F_{n}$ is a $\tau^{\prime\prime}$-closed set
outside of which $\sum_{n\leq1/\varepsilon}\frac{\chi_{A_{n}}}{n}$ vanishes,
\[
D_{\tau^{\prime\prime}}^{3+\overline{\zeta}+2}(f,\varepsilon,X)\subseteq
D_{\tau^{\prime\prime}}^{2+\overline{\zeta}+2}(f,\varepsilon,F)=D_{\tau
^{\prime\prime}}^{1+\overline{\zeta}+2}(\sum_{n\leq1/\varepsilon}\frac{1}%
{n}\chi_{A_{n}},\varepsilon,\cup_{n\leq\frac{1}{\varepsilon}}F_{n}).
\]
As each $F_{n},n\leq\frac{1}{\varepsilon}$, is a $\tau^{\prime\prime}$-clopen
subset of $\cup_{n\leq\frac{1}{\varepsilon}}F_{n}$, the last set is equal to
\[
\cup_{n\leq\frac{1}{\varepsilon}}D_{\tau^{\prime\prime}}^{1+\overline{\zeta
}+2}(\frac{1}{n}\chi_{A_{n}},\varepsilon,F_{n})\subseteq\cup_{n\leq\frac
{1}{\varepsilon}}D_{\tau^{\prime\prime}}^{1+\zeta_{n}+2}(\frac{1}{n}%
\chi_{A_{n}},\varepsilon,F_{n})=\emptyset.
\]
Therefore, $\beta_{\tau^{\prime\prime}}(f,\varepsilon)\leq3+\overline{\zeta
}+2\leq\zeta.$ Since $\varepsilon>0$ is arbitrary, $\beta_{\xi}^{\ast}%
(f)\leq\beta_{\tau^{\prime\prime}}(f)\leq\zeta$. This completes the proof of
the proposition.\bigskip
\end{proof}

\bigskip

\noindent\textbf{Remark}. In an uncountable Polish space $(X,\tau)$, any
nonzero countable ordinal is in the range of $\beta=\beta_{1}^{\ast}$.
Propositions \ref{p5} and \ref{p6} say that this is \textquotedblleft almost"
true for $\xi\geq2$. It would be interesting to find out if it is true exactly.

\bigskip

\noindent\textbf{Question}. Let $(X,\tau)$ be an uncountable Polish space and
let $\xi$ be a countable ordinal $\geq2$. Is it true that for any nonzero
countable ordinal $\zeta$, there exists $f\in{\mathfrak{B}}_{\xi}(\tau)$ such
that $\beta^{*}_{\xi}(f) = \zeta$?

\bigskip

Propositions \ref{p2} and \ref{p5} lead to a negative solution of
\cite[Question 5.16]{EKV}.

\begin{thm}
\label{t6} For any countable ordinal $\xi\geq2$, $\beta_{\xi}^{\ast}$ is not
essentially multiplicative. Specifically, if $(X,\tau)$ is an uncountable
Polish space and $\zeta$ is a nonzero countable ordinal, there are functions
$f,g\in{\mathfrak{B}}_{\xi}(X,\tau)$ so that $\beta_{\xi}^{\ast}(f),\beta
_{\xi}^{\ast}(g)\leq\omega^{\zeta}$ and $\beta_{\xi}^{\ast}(fg)>\omega^{\zeta
}$.
\end{thm}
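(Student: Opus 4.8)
The plan is to combine Proposition \ref{p5} (which supplies a characteristic function whose $\beta_\xi^*$-rank sits just above a prescribed ordinal) with Proposition \ref{p2} (which factors a characteristic function of bounded $\beta$-rank as a quotient $\chi_A/g$ with both factors of small rank). First I would set up the arithmetic: given a nonzero countable ordinal $\zeta$, I want to produce $\chi_A$ with $\beta_\xi^*(\chi_A) > \omega^{\zeta}$ but with $\beta_\xi^*(\chi_A) \le \omega^\zeta + 2$, which is exactly what Proposition \ref{p5} gives when applied with the ordinal $\omega^\zeta$ in place of its $\zeta$. So fix a Polish topology $\tau'$ with $\tau \subseteq \tau' \subseteq \Sigma_\xi^0(\tau)$ realizing $\beta_{\tau'}(\chi_A) = \beta_\xi^*(\chi_A) \le \omega^\zeta + 2$.

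Next I would apply Proposition \ref{p2} to decompose $\chi_A$. To do so I need to find a limit ordinal $\zeta_0$ and an ordinal $\eta$ with $\zeta_0' \cdot \eta \le \zeta_0$ for all $\zeta_0' < \zeta_0$, such that $\beta_{\tau'}(\chi_A) \le \zeta_0 \cdot \eta$. The natural choice is $\zeta_0 = \omega^\zeta$ and $\eta = \omega$ (or some small finite/countable value): since $\beta_{\tau'}(\chi_A) \le \omega^\zeta + 2 \le \omega^\zeta \cdot \omega = \omega^{\zeta+1}$, and for any $\zeta_0' < \omega^\zeta$ we have $\zeta_0' \cdot \omega \le \omega^\zeta$ because $\omega^\zeta$ is (additively, hence multiplicatively in the relevant sense) closed — here one should double-check: if $\zeta_0' < \omega^\zeta$ then $\zeta_0' < \omega^{\zeta'}\cdot k$ for some $\zeta' < \zeta$ and finite $k$, whence $\zeta_0' \cdot \omega = \omega^{\zeta'}\cdot\omega = \omega^{\zeta'+1} \le \omega^\zeta$. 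With $\zeta_0 = \omega^\zeta$ limit and $\eta = \omega$, Proposition \ref{p2} yields a Polish $\tau'' $ with $\tau' \subseteq \tau'' \subseteq \Sigma_\xi^0(\tau)$ and a function $g: X \to \mathbb N$ with $\beta_{\tau''}(\chi_A/g) \le \omega^\zeta$ and $\beta_{\tau''}(g) \le \omega$.

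Now set $f_1 = \chi_A/g$ and $f_2 = g$. Both are in $\mathfrak{B}_1(\tau'') \subseteq \mathfrak{B}_\xi(\tau)$, and $\beta_\xi^*(f_1) \le \beta_{\tau''}(f_1) \le \omega^\zeta$, $\beta_\xi^*(f_2) \le \beta_{\tau''}(f_2) \le \omega \le \omega^\zeta$ (using $\zeta \ge 1$). Their product is $f_1 f_2 = \chi_A$ (here one uses $g \ge 1$ everywhere, so $g$ never vanishes and the pointwise identity $(\chi_A/g)\cdot g = \chi_A$ holds literally), and $\beta_\xi^*(f_1 f_2) = \beta_\xi^*(\chi_A) > \omega^\zeta$. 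This exhibits the failure of essential multiplicativity with $f = f_1$, $g = f_2$, completing the proof.

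The main obstacle I anticipate is purely the ordinal bookkeeping in verifying the hypothesis $\zeta_0'\cdot\eta \le \zeta_0$ of Proposition \ref{p2} for the chosen $\zeta_0$ and $\eta$, together with confirming that the target bound $\beta_{\tau'}(\chi_A) \le \zeta_0 \cdot \eta$ really holds — i.e. that $\omega^\zeta + 2 \le \omega^\zeta \cdot \omega$, which is clear, and more delicately that $\zeta_0 \cdot \eta$ is chosen large enough while $\zeta_0 = \omega^\zeta$ is a limit ordinal, which requires $\zeta \ge 1$ (true by hypothesis since $\xi \ge 2$ forces no constraint but we are free to assume $\zeta \ge 1$; if $\zeta = 0$ one handles $\omega^0 = 1$ separately or simply notes the statement is about nonzero $\zeta$ anyway — actually $\omega^0=1$ is not limit, so for $\zeta\ge 1$ we are fine and that is the only case needed). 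Everything else is a direct concatenation of the quoted propositions.
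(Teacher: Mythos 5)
Your proposal is correct and follows essentially the same route as the paper's proof: apply Proposition \ref{p5} with $\omega^{\zeta}$ to get $\chi_{A}$ with $\omega^{\zeta}<\beta_{\xi}^{\ast}(\chi_{A})\leq\omega^{\zeta}+2$, then apply Proposition \ref{p2} with the limit ordinal $\omega^{\zeta}$ to factor $\chi_{A}=(\chi_{A}/g)\cdot g$ with both factors of rank at most $\omega^{\zeta}$. The only (immaterial) difference is that the paper takes $\eta=2$, using the bound $\omega^{\zeta}+2\leq\omega^{\zeta}\cdot2$ and $\zeta'\cdot2\leq\omega^{\zeta}$, whereas you take $\eta=\omega$; your ordinal verification $\zeta'\cdot\omega\leq\omega^{\zeta}$ for $\zeta'<\omega^{\zeta}$ is sound.
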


\begin{proof}
Taking $\omega^{\zeta}$ in place of $\zeta$ in Proposition \ref{p5}, we see
that there exists $\chi_{A}\in{\mathfrak{B}}_{\xi}(X,\tau)$ so that
$\omega^{\zeta}<\beta_{\xi}^{\ast}(\chi_{A})\leq\omega^{\zeta}\cdot2$. Let
$\tau^{\prime}\in\Sigma_{\xi}^{0}(\tau)$ be a Polish topology such that
$\beta_{\xi}^{\ast}(\chi_{A})=\beta_{\tau^{\prime}}\left(  \chi_{A}\right)  .$
Since $\zeta^{\prime}\cdot2\leq\omega^{\zeta}$ for any $\zeta^{\prime}%
<\omega^{\zeta}$, it follows from Proposition \ref{p2} that there exist a
Polish topology $\tau^{\prime\prime}$ on $X$ so that $\tau^{\prime}\subseteq$
$\tau^{\prime\prime}\subseteq\Sigma_{\xi}^{0}(\tau)$ and a function
$g:X\rightarrow{\mathbb{N}}$ such that $\beta_{\tau^{\prime\prime}}(\frac
{\chi_{A}}{g})\leq\omega^{\zeta}$ and $\beta_{\tau^{\prime\prime}}(g)\leq
2\leq\omega^{\zeta}$. In particular, $f=\frac{\chi_{A}}{g}$ and $g$ are
functions in ${\mathfrak{B}}_{1}(\tau^{\prime\prime})\subseteq{\mathfrak{B}%
}_{\xi}(\tau)$. Furthermore, $\beta_{\xi}^{\ast}(f)\leq\beta_{\tau
^{\prime\prime}}(f)\leq\omega^{\zeta}$ and similarly $\beta_{\xi}^{\ast
}(g)\leq\omega^{\zeta}$. Obviously, $\beta_{\xi}^{\ast}(fg)>\omega^{\zeta}$.
\end{proof}

\noindent\textbf{Remark}. It is known that $\beta$ is not essentially
multiplicative; see \cite[Theorem 6.5]{LT}.

\section{Multipliers on small Baire classes}

Since the rank $\beta$ is not essentially multiplicative, it is worthwhile to
take a deeper look at the behavior of $\beta$ with respect to multiplication.
Let $X$ be a Polish space. Following \cite{K-L}, for any countable ordinal
$\alpha$, let ${\mathfrak{B}}_{1}^{\alpha}(X)$ be the set of all functions
$f\in{\mathfrak{B}}_{1}(X)$ so that $\beta(f) \leq\omega^{\alpha}$. The spaces
${\mathfrak{B}}^{\alpha}_{1}(X)$ are called the \emph{small Baire classes} and
each is a vector subspace of ${\mathfrak{B}}_{1}(X)$ that is closed under
uniform convergence of sequences.

\begin{Def}
Let $h\in\mathfrak{B}_{1}( X)$ and let $\kappa$ and $\lambda$ be countable
ordinals. We say that $h$ is a \emph{$( \kappa,\lambda)$-multiplier} if the
product $hf\in\mathfrak{B}_{1}^{\lambda}( X) $ whenever $f\in\mathfrak{B}%
_{1}^{\kappa}( X) .$ The set of all $(\kappa,\lambda)$-multipliers is denoted
by ${\mathcal{M}}( \kappa,\lambda)$.
\end{Def}

The main result of the present section is a characterization of the
multipliers in ${\mathcal{M}}(\kappa,\kappa)$ in terms of certain ordinal
ranks, which will be introduced after a brief discussion on regular
derivations. A derivation $\delta$ is said to be \emph{regular} if (a)
$\delta(P) \subseteq\delta(Q)$ for any closed sets $P$ and $Q$ such that $P
\subseteq Q$, and (b) $\delta(P \cup Q) \subseteq\delta(P) \cup\delta(Q)$ for
any closed sets $P$ and $Q$. The main interest in regular derivations is
encapsulated in the next proposition, which is an abstraction of the content
of the proof of \cite[Lemma 5]{K-L}.

\begin{prop}
\label{p0} Let $\delta_{i}$, $1\leq i\leq3$ be regular derivations. Suppose
that $\delta_{1}(P) \subseteq\delta_{2}(P) \cup\delta_{3}(P)$ for any closed
set $P$ in $X$. Then $\delta^{\omega^{\alpha}}_{1}(P) \subseteq\delta
^{\omega^{\alpha}}_{2}(P) \cup\delta^{\omega^{\alpha}}_{3}(P)$ for any
countable ordinal $\alpha$ and any closed set $P$ in $X$.
\end{prop}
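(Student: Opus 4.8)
The plan is to prove the statement by transfinite induction on $\alpha$, using the regularity hypotheses to control the behavior of the iterated derivations at successor and limit stages. The base case $\alpha=0$ is exactly the hypothesis $\delta_1(P)\subseteq\delta_2(P)\cup\delta_3(P)$, since $\omega^0=1$. For the inductive step I would first establish an auxiliary ``finite iteration'' claim: for every $n\in\N$ and every closed set $P$,
\[
\delta_1^{n}(P)\subseteq\bigcup_{j+k=n}\delta_2^{j}\bigl(\delta_3^{k}(P)\bigr),
\]
or more crudely $\delta_1^n(P)\subseteq\delta_2^n(P)\cup\delta_3(P')$-type containments obtained by peeling off one derivation at a time. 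The point is that applying $\delta_1$ to $\delta_2^{j}\delta_3^{k}(P)$ and using the hypothesis together with regularity (monotonicity, so $\delta_i$ applied to a subset lands in $\delta_i$ of the superset) lets one push the index around; property (b), subadditivity, is what keeps the union from proliferating uncontrollably. This is the computational heart of the argument and the step I expect to be the main obstacle, since bookkeeping the indices correctly — and checking that regularity really does let one commute or absorb the derivations in the needed order — is where the proof of \cite[Lemma 5]{K-L} does its real work.

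Granting the finite claim, the successor case $\alpha\to\alpha+1$ goes by iterating the already-known inequality $\delta_i^{\omega^\alpha}$ for $i=1,2,3$. Writing $\omega^{\alpha+1}=\omega^\alpha\cdot\omega=\sup_n \omega^\alpha\cdot n$, I would regard the maps $P\mapsto\delta_i^{\omega^\alpha}(P)$ themselves as regular derivations (one must check they remain regular — monotonicity and subadditivity are preserved under transfinite iteration, which is a short separate verification), observe that by the inductive hypothesis $\delta_1^{\omega^\alpha}(P)\subseteq\delta_2^{\omega^\alpha}(P)\cup\delta_3^{\omega^\alpha}(P)$ for all closed $P$, and then apply the finite claim to these three new derivations to get $\bigl(\delta_1^{\omega^\alpha}\bigr)^n(P)\subseteq\bigcup_{j+k=n}\bigl(\delta_2^{\omega^\alpha}\bigr)^j\bigl(\delta_3^{\omega^\alpha}\bigr)^k(P)$. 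Taking the intersection over $n$ on both sides and using $\delta_i^{\omega^{\alpha}\cdot n}=\bigl(\delta_i^{\omega^\alpha}\bigr)^n$ and $\delta_i^{\omega^{\alpha+1}}=\bigcap_n\delta_i^{\omega^\alpha\cdot n}$ yields $\delta_1^{\omega^{\alpha+1}}(P)\subseteq\delta_2^{\omega^{\alpha+1}}(P)\cup\delta_3^{\omega^{\alpha+1}}(P)$, provided one checks that the intersection of the finite unions on the right is contained in the union of the two ``full'' intersections; here the tail behavior (for large $n$, in the pair $j+k=n$ at least one of $j,k$ is large) is what makes this set-theoretic step valid, and subadditivity again keeps the argument clean.

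For the limit case, let $\alpha=\sup_{\nu<\alpha}\alpha_\nu$ with $\alpha_\nu$ increasing. Then $\omega^\alpha=\sup_{\nu}\omega^{\alpha_\nu}$, so $\delta_i^{\omega^\alpha}(P)=\bigcap_{\nu<\alpha}\delta_i^{\omega^{\alpha_\nu}}(P)$. Applying the inductive hypothesis at each $\alpha_\nu$ gives $\delta_1^{\omega^{\alpha_\nu}}(P)\subseteq\delta_2^{\omega^{\alpha_\nu}}(P)\cup\delta_3^{\omega^{\alpha_\nu}}(P)$, and intersecting over $\nu$ — using that the families $\bigl(\delta_i^{\omega^{\alpha_\nu}}(P)\bigr)_\nu$ are decreasing in $\nu$, so the intersection of the unions equals the union of the intersections — immediately produces the desired containment at $\alpha$. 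I would close by remarking that the only genuinely new input beyond \cite[Lemma 5]{K-L} is the observation that transfinite iterates of regular derivations are again regular, which is what allows the induction to feed back into itself; everything else is the abstraction of the Kechris–Louveau argument to an arbitrary triple of regular derivations satisfying the hypothesis.
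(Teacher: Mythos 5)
Your overall architecture is the right one and is essentially the paper's: transfinite induction, with the successor step done by viewing the maps $P\mapsto\delta_i^{\omega^{\alpha}}(P)$ as regular derivations, iterating the inductive hypothesis finitely many times, and intersecting over $n$ (using that the iterates decrease), and the limit step done by cofinality. However, the ``computational heart'' that you explicitly left unverified is where the proposal breaks: the finite-iteration claim in the form you state it, $\delta_1^{n}(P)\subseteq\bigcup_{j+k=n}\delta_2^{j}\bigl(\delta_3^{k}(P)\bigr)$, is false. The natural peeling argument does not yield it, because when you apply $\delta_3$ on the outside of a set of the form $\delta_2^{j}\delta_3^{k}(P)$ you cannot commute the new $\delta_3$ past the $\delta_2$'s; regularity only lets you \emph{drop} factors (via $\delta_i(Q)\subseteq Q$ and monotonicity), and dropping destroys the exact sum $j+k=n$. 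A concrete counterexample: let $X=\omega^{2}+1$ (a countable compact metric, hence Polish, space), let $E$ be its set of isolated points and $G$ the set of its first-level limit points, and put $\delta_2(Q)=(Q\cap E)'$, $\delta_3(Q)=(Q\cap G)'$, $\delta_1=\delta_2\cup\delta_3$ (all regular, and the hypothesis holds). Then $\delta_1^{2}(X)=\{z\}$, where $z$ is the top point, while $\delta_2^{2}(X)=\delta_2\delta_3(X)=\delta_3^{2}(X)=\emptyset$, so the $n=2$ instance of your claim already fails; the point $z$ lives in $\delta_3\delta_2(X)$, an interleaving your union omits.

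The repair is exactly what the paper does: keep \emph{all} interleavings. Iterating the inductive hypothesis (with monotonicity and subadditivity of the iterates) gives $\delta_1^{\omega^{\alpha}\cdot 2n}(P)\subseteq\bigcup\delta_{i_1}^{\omega^{\alpha}}\cdots\delta_{i_{2n}}^{\omega^{\alpha}}(P)$ over all $\{2,3\}$-valued words of length $2n$; by pigeonhole one of the letters occurs at least $n$ times in each word, and deleting the occurrences of the other letter (each deletion only enlarges the set, by monotonicity) shows each word-term lies in $\delta_2^{\omega^{\alpha}\cdot n}(P)\cup\delta_3^{\omega^{\alpha}\cdot n}(P)$. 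This ``half-speed'' bound $\delta_1^{\omega^{\alpha}\cdot 2n}(P)\subseteq\delta_2^{\omega^{\alpha}\cdot n}(P)\cup\delta_3^{\omega^{\alpha}\cdot n}(P)$ is all you need: intersecting over $n$ and using that for decreasing sequences the intersection of unions is the union of intersections gives the successor case, and your limit-case and regularity-of-iterates remarks are fine as stated. So the gap is not the architecture but the specific combinatorial lemma you chose, which must be replaced by the interleaved-word-plus-pigeonhole containment.
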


\begin{proof}
[Sketch of proof]The proof is by induction on $\alpha$. We only give the proof
for the successor case. Assume that the proposition holds for some $\alpha$.
Using regularity, it is easy to see that
\[
\delta_{1}^{\omega^{\alpha}\cdot2n}(P) \subseteq\bigcup\delta^{\omega^{\alpha
}}_{i_{1}}\delta^{\omega^{\alpha}}_{i_{2}}\cdots\delta^{\omega^{\alpha}%
}_{i_{2n}}(P),
\]
where $(i_{1},\dots, i_{2n})$ runs over all $\{2,3\}$-valued sequences of
length $2n$. Since one of the numbers $2$ or $3$ has to appear at least $n$
times in the sequence $(i_{1},\dots, i_{2n})$, by regularity again,
$\delta_{1}^{\omega^{\alpha}\cdot2n}(P) \subseteq\delta_{2}^{\omega^{\alpha
}\cdot n}(P) \cup\delta_{3}^{\omega^{\alpha}\cdot n}(P)$. Taking intersection
over all $n\in{\mathbb{N}}$ gives the result for $\alpha+1$.
\end{proof}

However, the derivations associated with the rank $\beta$, $D(f,\varepsilon
,\cdot)$, may not be regular. Therefore, we replace with a family of regular
derivations that yields the same rank as $\beta$. Let $g,h$ be real valued
functions on $X$ and let $\varepsilon>0$ be given. If $P$ is a closed subset
of $X$, let $O_{g}(h,\varepsilon,P)$ be the set of all points $x\in P$ so that
for any open neighborhood $U$ of $x$, there exists $y\in U\cap P$ such that
\[
|h(y)-h(x)|(|g(y)|\vee|g(x)|)\geq\varepsilon.
\]
Then define $\delta_{g}(h,\varepsilon,P)$ to be $\overline{O_{g}%
(h,\varepsilon,P)}$. When $g$ is the constant function $1$, we simply write
$O(h,\varepsilon,P)$ and $\delta(g,\varepsilon,P)$ respectively.

\begin{prop}
\label{p4.3} For any $g,h:X\to{\mathbb{R}}$ and $\varepsilon>0$, $\delta
_{g}(h,\varepsilon,\cdot)$ is a regular derivation. Furthermore, for any
closed set $P$,
\[
\delta(h,\varepsilon,P) \subseteq D(h,\varepsilon,P)\subseteq\delta
(h,\frac{\varepsilon}{2},P).
\]

\end{prop}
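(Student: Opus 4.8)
The statement has two parts: first, that $\delta_g(h,\varepsilon,\cdot)$ is a regular derivation for any $g,h$ and $\varepsilon>0$; second, the sandwich $\delta(h,\varepsilon,P)\subseteq D(h,\varepsilon,P)\subseteq\delta(h,\varepsilon/2,P)$ for closed $P$. The plan is to dispatch the regularity assertion directly from the definition, then prove the two inclusions of the sandwich separately, the second one being the only place where a genuine idea (an $\varepsilon/2$ splitting) is needed.

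For regularity, I would first note that $\delta_g(h,\varepsilon,P)=\overline{O_g(h,\varepsilon,P)}$ is closed by construction and is contained in $P$ since $O_g(h,\varepsilon,P)\subseteq P$ and $P$ is closed; so $\delta_g(h,\varepsilon,\cdot)$ is indeed a derivation on $\mathcal C$. For monotonicity (condition (a)), suppose $P\subseteq Q$ are closed and $x\in O_g(h,\varepsilon,P)$; then for every open neighborhood $U$ of $x$ there is $y\in U\cap P\subseteq U\cap Q$ with $|h(y)-h(x)|(|g(y)|\vee|g(x)|)\geq\varepsilon$, so $x\in O_g(h,\varepsilon,Q)$. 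Hence $O_g(h,\varepsilon,P)\subseteq O_g(h,\varepsilon,Q)$ and, taking closures, $\delta_g(h,\varepsilon,P)\subseteq\delta_g(h,\varepsilon,Q)$. For subadditivity (condition (b)), I claim $O_g(h,\varepsilon,P\cup Q)\subseteq O_g(h,\varepsilon,P)\cup O_g(h,\varepsilon,Q)$: if $x\in O_g(h,\varepsilon,P\cup Q)$ then $x\in P\cup Q$; say $x\in P$ (the other case is symmetric). Suppose toward a contradiction that $x\notin O_g(h,\varepsilon,P)$, so there is an open neighborhood $U_0$ of $x$ with $|h(y)-h(x)|(|g(y)|\vee|g(x)|)<\varepsilon$ for all $y\in U_0\cap P$. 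If moreover $x\notin\overline Q$, pick a neighborhood $U_1$ of $x$ disjoint from $Q$; then on $U_0\cap U_1$ the witnessing inequality fails for all $y\in(U_0\cap U_1)\cap(P\cup Q)$, contradicting $x\in O_g(h,\varepsilon,P\cup Q)$. If instead $x\in\overline Q$, then since $Q$ is closed $x\in Q$, and I argue the same way with $O_g(h,\varepsilon,Q)$. Either way $x\in O_g(h,\varepsilon,P)\cup O_g(h,\varepsilon,Q)$. Taking closures and using that $\overline{A\cup B}=\overline A\cup\overline B$ gives $\delta_g(h,\varepsilon,P\cup Q)\subseteq\delta_g(h,\varepsilon,P)\cup\delta_g(h,\varepsilon,Q)$, which is condition (b).

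For the sandwich, specialize to $g\equiv1$, so $O(h,\varepsilon,P)$ consists of the $x\in P$ such that every neighborhood $U$ of $x$ contains a $y\in U\cap P$ with $|h(y)-h(x)|\geq\varepsilon$, and $\delta(h,\varepsilon,P)=\overline{O(h,\varepsilon,P)}$. The left inclusion $\delta(h,\varepsilon,P)\subseteq D(h,\varepsilon,P)$: first I check $O(h,\varepsilon,P)\subseteq D(h,\varepsilon,P)$, which is immediate since if $x\in O(h,\varepsilon,P)$ then in any neighborhood $U$ of $x$ the points $x_1:=y$ and $x_2:=x$ both lie in $U\cap P$ (shrink $U$ if necessary so that $x\in U$; $x\in U$ always since $U$ is a neighborhood of $x$) and satisfy $|h(x_1)-h(x_2)|\geq\varepsilon$, so $x\in D(h,\varepsilon,P)$. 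Then, since $D(h,\varepsilon,P)=D^1(h,\varepsilon,P)$ is closed (it is the first derived set of a derivation), taking the closure on the left preserves the inclusion: $\delta(h,\varepsilon,P)=\overline{O(h,\varepsilon,P)}\subseteq D(h,\varepsilon,P)$.

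The right inclusion $D(h,\varepsilon,P)\subseteq\delta(h,\varepsilon/2,P)$ is the main point. Let $x\in D(h,\varepsilon,P)$ and let $U$ be any open neighborhood of $x$; I must produce a point of $U$ lying in $\overline{O(h,\varepsilon/2,P)}$. Since $x\in D(h,\varepsilon,P)$, there exist $x_1,x_2\in U\cap P$ with $|h(x_1)-h(x_2)|\geq\varepsilon$. By the triangle inequality, at least one of $|h(x_1)-h(x)|\geq\varepsilon/2$ or $|h(x_2)-h(x)|\geq\varepsilon/2$ holds; relabeling, assume $|h(x_1)-h(x)|\geq\varepsilon/2$. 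I would like to conclude $x\in O(h,\varepsilon/2,P)$, but the subtlety is that $x_1$ depends on $U$, so what I actually get is: for every neighborhood $U$ of $x$, there is a point $x_1\in U\cap P$ with $|h(x_1)-h(x)|\geq\varepsilon/2$ — which is precisely the definition of $x\in O(h,\varepsilon/2,P)$. Hence $x\in O(h,\varepsilon/2,P)\subseteq\overline{O(h,\varepsilon/2,P)}=\delta(h,\varepsilon/2,P)$, and since $x$ was an arbitrary element of $D(h,\varepsilon,P)$, we are done. I expect the only thing requiring care is keeping straight the two slightly different "for all neighborhoods $U$, there exists $y$" quantifier patterns in the definitions of $O$ and of $D^1$, and making sure the closure operations are inserted at the right places (using that $D^1(h,\varepsilon,P)$ is closed while $O(h,\varepsilon,P)$ need not be).
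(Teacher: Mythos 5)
Your proof is correct and follows essentially the same route as the paper: establish the inclusions at the level of the sets $O_g$ and then take closures, with the $\varepsilon/2$ triangle-inequality splitting doing the work in the right-hand inclusion $D(h,\varepsilon,P)\subseteq\delta(h,\frac{\varepsilon}{2},P)$. The only difference is cosmetic: where the paper extracts convergent sequences and passes to subsequences (using metrizability of $X$) both for condition (b) of regularity and for the right-hand inclusion, you argue directly with neighborhoods --- intersecting the two witnessing neighborhoods for (b), and letting the $\varepsilon/2$-witness depend on $U$ --- which is equally valid since the definition of $O_g$ quantifies over each neighborhood separately.
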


\begin{proof}
It is clear that $\delta_{g}(h,\varepsilon,\cdot)$ is a derivation that
satisfies condition (a) in the definition of regularity. Let $P$ and $Q$ be
closed sets and let $x\in O_{g}(h,\varepsilon,P\cup Q)$. Since $X$ is
metrizable, there is a sequence $(y_{n})$ in $P\cup Q$ such that
$|h(y_{n})-h(x)|(|g(y_{n})|\vee|g(x)|) \geq\varepsilon$. By taking a
subsequence if necessary, we may assume that $y_{n}$ belongs to, say, $P$ for
all $n$. In particular, since $P$ is closed, $x\in P$. If $U$ is an open
neighborhood of $x$, then for sufficiently large $n$, $y_{n}\in U \cap P$ and
$|h(y_{n})-h(x)|(|g(y_{n})|\vee|g(x)|) \geq\varepsilon$. Thus $x\in
O_{g}(h,\varepsilon,P)$. This shows that $O_{g}(h,\varepsilon,P\cup
Q)\subseteq O_{g}(h,\varepsilon,P)\cup O_{g}(h,\varepsilon,Q)$. It follows
that $\delta_{g}(h,\varepsilon,\cdot)$ satisfies condition (b) of regularity.

If $x\in O(h,\varepsilon,P)$, then for any open neighborhood $U$ of $x$, there
exists $y\in U\cap P$ so that $|h(y) - h(x)| \geq\varepsilon$. Thus $x\in
D(h,\varepsilon,P)$. Hence $\delta(h,\varepsilon,P) \subseteq D(h,\varepsilon
,P)$. On the other hand, suppose that $x\in D(h,\varepsilon,P)$. Then there
are sequences $(y_{n})$ and $(z_{n})$ in $P$, both converging to $x$, so that
$|h(y_{n}) - h(z_{n})| \geq\varepsilon$ for all $n$. For each $n$, either
$|h(y_{n})-h(x)|$ or $|h(z_{n})-h(x)| \geq\frac{\varepsilon}{2}$. By taking a
subsequence if necessary, we may assume that, say, $|h(y_{n}) - h(x)|
\geq\frac{\varepsilon}{2}$ for all $n$. If $U$ is an open neighborhood of $x$,
then for sufficiently large $n$, $y_{n}\in U \cap P$ and $|h(y_{n})-h(x)|
\geq\frac{\varepsilon}{2}$. Thus $x\in O_{g}(h,\varepsilon,P)\subseteq
\delta(h,\frac{\varepsilon}{2},P)$.
\end{proof}

A particular consequence of Proposition \ref{p4.3} is that a function
$f:X\to{\mathbb{R}}$ satisfies $\beta(f) \leq\beta_{0}$ if and only if
$\delta^{\beta_{0}}(h,\varepsilon, X) = \emptyset$ for all $\varepsilon>0$.
This enables us to work with the regular derivation $\delta(f,\varepsilon
,\cdot)$ in place of $D(f,\varepsilon,\cdot)$.

\begin{prop}
\label{p1} Let $g,h$ be real-valued functions on $X$ and let $\varepsilon> 0$.
For any closed set $P$,
\[
\delta(gh, \varepsilon,P) \subseteq\delta_{h}(g,\frac{\varepsilon}{2},P)
\cup\delta_{g}(h,\frac{\varepsilon}{2},P).
\]
Hence
\[
\delta^{\omega^{\alpha}}(gh, \varepsilon,P) \subseteq\delta^{\omega^{\alpha}%
}_{h}(g,\frac{\varepsilon}{2},P) \cup\delta^{\omega^{\alpha}}_{g}%
(h,\frac{\varepsilon}{2},P)
\]
for any countable ordinal $\alpha$.
\end{prop}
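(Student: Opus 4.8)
The plan is to prove the pointwise inclusion $\delta(gh,\varepsilon,P)\subseteq\delta_h(g,\frac{\varepsilon}{2},P)\cup\delta_g(h,\frac{\varepsilon}{2},P)$ first, and then invoke Proposition \ref{p0} to obtain the iterated version. For the first inclusion it suffices, since the right-hand side is closed, to show that $O(gh,\varepsilon,P)\subseteq\delta_h(g,\frac{\varepsilon}{2},P)\cup\delta_g(h,\frac{\varepsilon}{2},P)$. So fix $x\in O(gh,\varepsilon,P)$. By metrizability there is a sequence $(y_n)$ in $P$ converging to $x$ with $|g(y_n)h(y_n)-g(x)h(x)|\geq\varepsilon$ for all $n$. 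The key algebraic step is the splitting
\[
g(y_n)h(y_n)-g(x)h(x)=\bigl(g(y_n)-g(x)\bigr)h(y_n)+g(x)\bigl(h(y_n)-h(x)\bigr),
\]
so that for each $n$ at least one of $|g(y_n)-g(x)|\,|h(y_n)|\geq\frac{\varepsilon}{2}$ or $|g(x)|\,|h(y_n)-h(x)|\geq\frac{\varepsilon}{2}$ holds. Passing to a subsequence, assume the first inequality holds for all $n$; then $|g(y_n)-g(x)|\bigl(|h(y_n)|\vee|h(x)|\bigr)\geq\frac{\varepsilon}{2}$ for all $n$, and since $y_n\to x$, every open neighborhood of $x$ eventually contains some such $y_n$, whence $x\in O_h(g,\frac{\varepsilon}{2},P)\subseteq\delta_h(g,\frac{\varepsilon}{2},P)$. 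In the other case, $|h(y_n)-h(x)|\bigl(|g(y_n)|\vee|g(x)|\bigr)\geq|g(x)|\,|h(y_n)-h(x)|\geq\frac{\varepsilon}{2}$, so $x\in O_g(h,\frac{\varepsilon}{2},P)\subseteq\delta_g(h,\frac{\varepsilon}{2},P)$. This proves the first inclusion.

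For the iterated inclusion, set $\delta_1=\delta(gh,\varepsilon,\cdot)$, $\delta_2=\delta_h(g,\frac{\varepsilon}{2},\cdot)$, and $\delta_3=\delta_g(h,\frac{\varepsilon}{2},\cdot)$. By Proposition \ref{p4.3}, each of these is a regular derivation, and we have just shown $\delta_1(P)\subseteq\delta_2(P)\cup\delta_3(P)$ for every closed set $P$. Proposition \ref{p0} then yields immediately that $\delta_1^{\omega^{\alpha}}(P)\subseteq\delta_2^{\omega^{\alpha}}(P)\cup\delta_3^{\omega^{\alpha}}(P)$ for every countable ordinal $\alpha$ and every closed $P$, which is exactly the second displayed inclusion.

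I do not anticipate a serious obstacle here: the entire argument is the product-rule splitting plus a subsequence extraction, and the ordinal bookkeeping is delegated wholesale to Proposition \ref{p0}. The one point requiring a little care is the bound $|h(y_n)-h(x)|\bigl(|g(y_n)|\vee|g(x)|\bigr)\geq|g(x)|\,|h(y_n)-h(x)|$ (and its analogue in the first case), which is trivial once one remembers that $|g(x)|\leq|g(y_n)|\vee|g(x)|$; this is why the weight in the definition of $O_g$ is a maximum rather than, say, $|g(x)|$ alone. It is worth noting that the proposition as stated does not even need the closures in the definitions of $\delta_h$ and $\delta_g$ for the first inclusion — the raw oscillation sets $O_h$ and $O_g$ already suffice — but taking closures is precisely what makes $\delta_h$ and $\delta_g$ regular and hence eligible for Proposition \ref{p0}.
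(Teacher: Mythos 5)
Your proof is correct and follows essentially the same route as the paper's: the product-rule splitting $|(gh)(y)-(gh)(x)|\leq|g(y)-g(x)|\,|h(y)|+|h(y)-h(x)|\,|g(x)|$ to place $x$ in $O_h(g,\frac{\varepsilon}{2},P)\cup O_g(h,\frac{\varepsilon}{2},P)$, followed by an appeal to Proposition \ref{p0} via the regularity established in Proposition \ref{p4.3}. The only difference is presentational (you extract a sequence and pass to a subsequence, while the paper argues directly with neighborhoods assuming $x\notin O_h(g,\frac{\varepsilon}{2},P)$), which does not change the substance.
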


\begin{proof}
Suppose that $x\in O(gh,\varepsilon,P)$. If $x\notin O_{h}(g,\frac
{\varepsilon}{2},P)$, then there exists an open neighborhood $U$ of $x$ such
that
\[
|g(y)-g(x)||h(y)|\leq\frac{\varepsilon}{2}\text{ for all $y\in U\cap P$}.
\]
Let $V$ be an open neighborhood of $x$. There exists $y\in U\cap V\cap P$ such
that
\begin{align*}
\varepsilon &  \leq|(gh)(y)-(gh)(x)| &  & \\
&  \leq|g(y)-g(x)||h(y)|+|h(y)-h(x)||g(x)| &  & \\
&  \leq\frac{\varepsilon}{2}+|h(y)-h(x)||g(x)|. &  &
\end{align*}
Thus $|h(y)-h(x)||g(x)|\geq\frac{\varepsilon}{2}$. This proves that $x\in
O_{g}(h,\frac{\varepsilon}{2},P)$. Therefore, $O(gh,\varepsilon,P)\subseteq
O_{h}(g,\frac{\varepsilon}{2},P)\cup O_{g}(h,\frac{\varepsilon}{2},P)$. Taking
closures yield the first statement of the proposition. The second statement
follows from Proposition \ref{p0}.
\end{proof}

Proposition \ref{p1} splits $\delta^{\omega^{\alpha}}(gh,\varepsilon,P)$ into
two parts. We now introduce a derivation to control the first part. If $A$ is
a subset of $X$, denote by $A^{\prime}$ the set of accumulation points of $A$
in $X$. Suppose that $h$ is real valued function on $X$ and $M$ is a
nonnegative real number. Define the derivation $h_{M}$ by $h_{M}%
(P)=(P\cap\{|h|>M\})^{\prime}$. It is easy to check that $h_{M}$ is a regular derivation.

\begin{prop}
\label{p2'} Let $g,h$ be real-valued functions on $X$ and let $\varepsilon>0$.
If $0<M<\infty$ and $\alpha$ is a countable ordinal , then
\[
\delta_{h}^{\omega^{\alpha}}(g,\varepsilon,P)\subseteq h_{M}^{\omega^{\alpha}%
}(P)\cup\delta^{\omega^{\alpha}}(g,\frac{\varepsilon}{M},P)\text{ for any
closed set $P$ in $X$}.
\]

\end{prop}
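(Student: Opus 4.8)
The strategy is to deduce this from Proposition \ref{p0}, so that the real content is the single-step case $\alpha=0$, namely the inclusion
\[
\delta_h(g,\varepsilon,P)\subseteq h_M(P)\cup\delta\bigl(g,\tfrac{\varepsilon}{M},P\bigr)\qquad\text{for every closed }P\subseteq X .
\]
Granting this, observe that $\delta_h(g,\varepsilon,\cdot)$ is a regular derivation by Proposition \ref{p4.3}, that $\delta(g,\tfrac{\varepsilon}{M},\cdot)$ is the regular derivation produced by the same proposition when the weight is taken to be the constant function $1$, and that $h_M$ is regular as already noted. Hence Proposition \ref{p0}, applied with $\delta_1=\delta_h(g,\varepsilon,\cdot)$, $\delta_2=h_M$ and $\delta_3=\delta(g,\tfrac{\varepsilon}{M},\cdot)$, upgrades the single-step inclusion to the asserted one with $\omega^{\alpha}$ in place of $1$, for every countable ordinal $\alpha$ and every closed $P$.

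For the single-step inclusion, since the right-hand side is closed and $\delta_h(g,\varepsilon,P)=\overline{O_h(g,\varepsilon,P)}$, it is enough to show
\[
O_h(g,\varepsilon,P)\subseteq h_M(P)\cup O\bigl(g,\tfrac{\varepsilon}{M},P\bigr).
\]
Fix $x\in O_h(g,\varepsilon,P)$ and assume $x\notin h_M(P)$; the goal is to place $x$ in $O(g,\tfrac{\varepsilon}{M},P)$. From $x\notin h_M(P)$ one extracts an open neighbourhood $W$ of $x$ such that $|h(y)|\le M$ for all $y\in W\cap P$ --- barring, a priori, the point $x$ itself, a case discussed below. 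Then for any open neighbourhood $U\subseteq W$ of $x$, the definition of $O_h(g,\varepsilon,P)$ furnishes $y\in U\cap P$ with
\[
|g(y)-g(x)|\,\bigl(|h(y)|\vee|h(x)|\bigr)\ge\varepsilon ,
\]
and since $|h(y)|\vee|h(x)|\le M$ this forces $|g(y)-g(x)|\ge\varepsilon/M$. As $U$ was arbitrary, $x\in O(g,\tfrac{\varepsilon}{M},P)$, as required.

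The delicate point --- indeed the reason for introducing the derivation $h_M$ in the first place --- is the case $|h(x)|>M$: there an arbitrarily small oscillation of $g$ near $x$ is amplified by $|h(x)|$ into an oscillation of $gh$ of size $\ge\varepsilon$, so such an $x$ need not lie in $\delta(g,\tfrac{\varepsilon}{M},P)$ and must instead be caught by $h_M(P)$. Arranging the bookkeeping around the set $\{|h|>M\}$ so that every such $x$ (as well as every $x$ at which points of $P\cap\{|h|>M\}$ accumulate) lands in $h_M(P)$ is the only subtle part of the argument; once it is in place, the remainder is the elementary estimate displayed above, and the passage to the $\omega^{\alpha}$-th derived sets is automatic from Proposition \ref{p0}.
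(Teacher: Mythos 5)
Your overall architecture is the paper's: reduce to the single application via Proposition \ref{p0} (all three derivations being regular), and then, when $|h|\le M$ on a neighbourhood of $x$ inside $P$, run the elementary estimate $\varepsilon\le|g(y)-g(x)|\,(|h(y)|\vee|h(x)|)\le M|g(y)-g(x)|$. That part is fine and is exactly what the paper does. The problem is the case you explicitly set aside, $|h(x)|>M$: you assert that such an $x$ ``must instead be caught by $h_M(P)$'' and that arranging this is mere bookkeeping. It is not. With the paper's definition $h_M(P)=(P\cap\{|h|>M\})'$ (a derived set, not a closure), a point $x\in P$ with $|h(x)|>M$ which is \emph{isolated} in $P\cap\{|h|>M\}$ does not belong to $h_M(P)$, and it need not belong to $\delta(g,\varepsilon/M,P)$ either, yet it can lie in $\delta_h(g,\varepsilon,P)$. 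Concretely, take $X=\mathbb{R}$, $P=\{0\}\cup\{1/n:n\in\mathbb{N}\}$, $h=2\chi_{\{0\}}$, $g(1/n)=\tfrac12$, $g=0$ elsewhere, and $\varepsilon=M=1$: every neighbourhood of $0$ contains some $1/n$ with $|g(1/n)-g(0)|\,(|h(1/n)|\vee|h(0)|)=\tfrac12\cdot2\ge1$, so $0\in\delta_h(g,1,P)$, while $h_1(P)=(\{0\})'=\emptyset$ and $\delta(g,1,P)=\emptyset$ since all oscillations of $g$ are $\tfrac12$. So the single-step inclusion you reduce everything to cannot be rescued by the bookkeeping you defer; the deferred case is precisely where the argument breaks.

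For what it is worth, the paper's own proof glosses over the same point: from $x\notin h_M(P)$ it extracts a neighbourhood $U$ with $|h(y)|\le M$ for \emph{all} $y\in U\cap P$, which silently includes $y=x$ and is only justified if $|h(x)|\le M$ (equivalently, if one reads $h_M(P)$ as $\overline{P\cap\{|h|>M\}}$ rather than its derived set; with that reading both the paper's estimate and yours go through verbatim, and the later applications, which only use $\cap_M h_M^{\omega^\kappa}(X)=\emptyset$ or $h_M^{\omega^\kappa}(X)=\emptyset$, are unaffected). So you have correctly located the genuine subtlety, but a proof must resolve it --- by modifying the derivation to the closure, or by disposing of the case $|h(x)|>M$ by some other argument --- rather than assert that it can be arranged; as written, your proposal has a gap at exactly this step, and the claim it relies on is false for the stated definition of $h_M$.
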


\begin{proof}
Using Proposition \ref{p0}, we see that it suffices to prove the present
proposition for $\alpha=0$. Assume that $x\in O_{h}(g,\varepsilon,P)\backslash
h_{M}(P)$. There exists an open neighborhood $U$ of $x$ such that $|h(y)|\leq
M$ for all $y\in U\cap P$. For any open neighborhood $V$ of $x$, there exists
$y\in U\cap V\cap P$ so that
\[
\varepsilon\leq|g(y)-g(x)|(|h(y)|\vee|h(x)|)\leq M|g(y)-g(x)|.
\]
Hence $|g(y)-g(x)|\geq\frac{\varepsilon}{M}$. This proves that $x\in
O(g,\frac{\varepsilon}{M},P)$. Therefore, $O_{h}(g,\varepsilon,P)\subseteq
h_{M}(P)\cup O(g,\frac{\varepsilon}{M},P)$. Taking closures complete the proof.
\end{proof}

\begin{cor}
\label{c3} Let $h$ be a real-valued function on $X$ and let $\kappa$ be a
countable ordinal. Assume that there exists $0<M< \infty$ such that
$h_{M}^{\omega^{\kappa}}(X) = \emptyset$. For any $g \in{\mathfrak{B}}%
^{\kappa}_{1}(X)$ and any $\varepsilon> 0$, $\delta_{h}^{\omega^{\kappa}%
}(g,\varepsilon,P)= \emptyset$ for any closed set $P$ in $X$.
\end{cor}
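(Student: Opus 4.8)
The statement is a direct corollary of Proposition~\ref{p2'}. The plan is to instantiate that proposition with the given $M$ and combine the hypothesis $h_M^{\omega^\kappa}(X)=\emptyset$ with the fact that $g\in\mathfrak{B}_1^\kappa(X)$, which by the remark following Proposition~\ref{p4.3} means precisely that $\delta^{\omega^\kappa}(g,\eta,X)=\emptyset$ for every $\eta>0$.

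First I would fix an arbitrary closed set $P\subseteq X$ and an arbitrary $\varepsilon>0$. Apply Proposition~\ref{p2'} with $\alpha=\kappa$ to obtain
\[
\delta_h^{\omega^\kappa}(g,\varepsilon,P)\subseteq h_M^{\omega^\kappa}(P)\cup\delta^{\omega^\kappa}\!\bigl(g,\tfrac{\varepsilon}{M},P\bigr).
\]
Next I would dispose of each term on the right. For the first term, since $h_M$ is a (regular, hence in particular monotone) derivation and $P\subseteq X$, we have $h_M^{\omega^\kappa}(P)\subseteq h_M^{\omega^\kappa}(X)=\emptyset$ by hypothesis. For the second term, since $g\in\mathfrak{B}_1^\kappa(X)$ we have $\beta(g)\leq\omega^\kappa$, so by the consequence of Proposition~\ref{p4.3} noted in the text, $\delta^{\omega^\kappa}(g,\tfrac{\varepsilon}{M},X)=\emptyset$; monotonicity of the regular derivation $\delta(g,\tfrac{\varepsilon}{M},\cdot)$ then gives $\delta^{\omega^\kappa}(g,\tfrac{\varepsilon}{M},P)\subseteq\delta^{\omega^\kappa}(g,\tfrac{\varepsilon}{M},X)=\emptyset$. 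Combining, $\delta_h^{\omega^\kappa}(g,\varepsilon,P)=\emptyset$, as required.

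There is essentially no obstacle here: the only thing to be a little careful about is that $\delta^{\omega^\kappa}(g,\eta,X)=\emptyset$ holds for \emph{every} $\eta>0$ (not just for $\eta=\varepsilon$), so that one may legitimately take $\eta=\varepsilon/M$; this is exactly what the characterization after Proposition~\ref{p4.3} provides, using $\beta(g)=\sup_\eta\beta(g,\eta)\leq\omega^\kappa$. One should also note that the monotonicity property (condition (a) of regularity) passes to all iterates $\delta^\rho$ by a routine transfinite induction, which is standard and need not be spelled out.
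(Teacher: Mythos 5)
Your argument is correct and is exactly the intended one: the corollary is an immediate application of Proposition~\ref{p2'} with $\alpha=\kappa$, killing the first term by the hypothesis $h_M^{\omega^\kappa}(X)=\emptyset$ together with monotonicity, and the second by the characterization of $\beta(g)\leq\omega^{\kappa}$ via $\delta^{\omega^\kappa}(g,\eta,X)=\emptyset$ for all $\eta>0$ noted after Proposition~\ref{p4.3}. The paper leaves this deduction implicit, and your write-up supplies precisely that reasoning.
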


Control over the second part of $\delta^{\omega^{\alpha}}(gh, \varepsilon,P)$
is much more delicate. We begin with a lemma.

\begin{lem}
\label{l4'} Let $g,h$ be real-valued functions on $X$ and let $\eta>0$. Let
$P$ be a closed set in $X$. If $U$ is an open set such that $M=\sup_{x\in
U\cap P}|g(x)|<\infty$, then $U\cap\delta_{g}^{\alpha}(h,\eta,P)\subseteq
\delta^{\alpha}(h,\frac{\eta}{M},P)$ for any $\alpha<\omega_{1}$.
\end{lem}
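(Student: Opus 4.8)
The plan is to prove the stated inclusion by transfinite induction on $\alpha$. The base case $\alpha=0$ is trivial, since both $\delta_{g}^{0}(h,\eta,P)$ and $\delta^{0}(h,\frac{\eta}{M},P)$ equal $P$. For a limit ordinal $\alpha$, both iterated derivatives are intersections of the earlier stages, so
\[
U\cap\delta_{g}^{\alpha}(h,\eta,P)=\bigcap_{\alpha'<\alpha}\bigl(U\cap\delta_{g}^{\alpha'}(h,\eta,P)\bigr)\subseteq\bigcap_{\alpha'<\alpha}\delta^{\alpha'}\bigl(h,\tfrac{\eta}{M},P\bigr)=\delta^{\alpha}\bigl(h,\tfrac{\eta}{M},P\bigr)
\]
by the inductive hypotheses. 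Thus all the content is in the successor step. (We may assume $M>0$, since otherwise the computation below shows that already $U\cap\delta_{g}^{1}(h,\eta,P)=\emptyset$ and the inclusion is vacuous.)

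For the successor step, write $Q=\delta_{g}^{\alpha}(h,\eta,P)$ and $R=\delta^{\alpha}(h,\frac{\eta}{M},P)$, so that $U\cap Q\subseteq R$ by the inductive hypothesis; note also $Q\subseteq P$, so every point of $U\cap Q$ lies in $U\cap P$ and hence satisfies $|g|\leq M$. First I would establish the one-step inclusion $U\cap O_{g}(h,\eta,Q)\subseteq O(h,\frac{\eta}{M},R)$. Given $x\in U\cap O_{g}(h,\eta,Q)$ and an arbitrary open neighbourhood $V$ of $x$, apply the defining property of $O_{g}(h,\eta,Q)$ at $x$ to the smaller neighbourhood $U\cap V$ to obtain $y\in U\cap V\cap Q$ with $|h(y)-h(x)|\bigl(|g(y)|\vee|g(x)|\bigr)\geq\eta$; since $x,y\in U\cap P$ we have $|g(y)|\vee|g(x)|\leq M$, so $|h(y)-h(x)|\geq\frac{\eta}{M}$, and since $y\in U\cap Q\subseteq R$, $y$ is the required point of $V\cap R$. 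As $x\in U\cap Q\subseteq R$ as well, this gives $x\in O(h,\frac{\eta}{M},R)$.

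Finally I would pass to closures, using the elementary fact that $U\cap\overline{A}\subseteq\overline{U\cap A}$ for any open set $U$ and any set $A$ (any open neighbourhood $W$ of a point of $U\cap\overline{A}$ meets $A$ already inside the open set $U\cap W$). Combining this with the one-step inclusion yields
\[
U\cap\delta_{g}^{\alpha+1}(h,\eta,P)=U\cap\overline{O_{g}(h,\eta,Q)}\subseteq\overline{U\cap O_{g}(h,\eta,Q)}\subseteq\overline{O\bigl(h,\tfrac{\eta}{M},R\bigr)}=\delta^{\alpha+1}\bigl(h,\tfrac{\eta}{M},P\bigr),
\]
which closes the induction. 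I expect the only delicate point---scarcely an obstacle---to be exactly this interplay between the closure built into the definition of $\delta_{g}$ and restriction to the open set $U$: one must shrink the ambient neighbourhood to lie inside $U$ before invoking the oscillation condition, and then exploit the openness of $U$ to commute $U\cap(\cdot)$ through the closure. Everything else is a direct unwinding of the definitions.
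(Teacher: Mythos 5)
Your proof is correct and follows essentially the same route as the paper's: transfinite induction in which, at the successor step, one shrinks the test neighbourhood into $U$, bounds $|g(x)|\vee|g(y)|$ by $M$, invokes the inductive hypothesis to land $y$ (and $x$) in $\delta^{\alpha}(h,\frac{\eta}{M},P)$, and then passes to closures using the openness of $U$. You are merely a bit more explicit than the paper about the step $U\cap\overline{A}\subseteq\overline{U\cap A}$ and the degenerate case $M=0$, which the paper leaves implicit.
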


\begin{proof}
We prove the lemma by induction on $\alpha$. The case $\alpha=0$ is obvious.
Set $Q=\delta_{g}^{\alpha}(h,\eta,P)$ and assume that $x\in U\cap O_{g}%
(h,\eta,Q)$. For any open neighborhood $V$ of $x$, there exists $y\in U\cap
V\cap Q$ such that
\[
\eta\leq|h(y)-h(x)|(|g(y)|\vee|g(x)|)\leq M|h(y)-h(x)|.
\]
By the inductive hypothesis, $U\cap Q\subseteq\delta^{\alpha}(h,\frac{\eta}%
{M},P)$. Thus $y\in V\cap\delta^{\alpha}(h,\frac{\eta}{M},P)$. It follows that
$x\in\delta^{\alpha+1}(h,\frac{\eta}{M},P).$ This shows that $U\cap
O_{g}(h,\eta,Q)\subseteq\delta^{\alpha+1}(h,\frac{\eta}{M},P)$ and thus, by
taking closures, that the lemma holds for $\alpha+1$.

Finally, let $\alpha$ be a limit ordinal and assume that the lemma holds for
all $\alpha^{\prime}< \alpha$. Then
\begin{align*}
U \cap\delta^{\alpha}_{g}(h,\eta,P)  &  = U \cap(\cap_{\alpha^{\prime}<
\alpha}\delta^{\alpha^{\prime}}_{g}(h,\eta,P))\\
&  = \cap_{\alpha^{\prime}< \alpha}(U \cap\delta^{\alpha^{\prime}}_{g}%
(h,\eta,P))\\
&  \subseteq\cap_{\alpha^{\prime}< \alpha}\delta^{\alpha^{\prime}}%
(h,\frac{\eta}{M},P) = \delta^{\alpha}(h,\frac{\eta}{M},P).
\end{align*}
This completes the induction.
\end{proof}

Next, we introduce another derivation that is partly responsible for
controlling the second half of $\delta^{\omega^{\alpha}}(gh, \varepsilon,P)$.
Let $g$ be a real valued function on $X$. For any closed set $P$ in $X$, let
$d_{\infty}(g,P)$ be the set of all points $x\in P$ so that for any open
neighborhood $U$ of $x$ and any $n\in{\mathbb{N}}$, there exists $y \in U \cap
P$ such that $|g(y)| > n$. Once again, $d_{\infty}(g,\cdot)$ is a regular derivation.

\begin{lem}
\label{l6} Let $g,h$ be real-valued functions on $X$ and let $\eta> 0$. Let
$P$ be a closed set in $X$. Suppose that $U$ is an open set in $X$ such that
$U \cap(\cup_{a > 0}\delta^{\alpha}(h,a,P)) = \emptyset$ for some
$\alpha<\omega_{1}$. Then $U \cap\delta^{\alpha}_{g}(h,\eta,P)\subseteq
d_{\infty}(g,P)$.
\end{lem}

\begin{proof}
Let $x\in U\backslash d_{\infty}(g,P)$. There exists an open neighborhood $V$
of $x$ so that $V\subseteq U$ and $M=\sup_{x\in V\cap P}|g(x)|<\infty$. By
Lemma \ref{l4'}, $V\cap\delta_{g}^{\alpha}(h,\eta,P)\subseteq\delta^{\alpha
}(h,\frac{\eta}{M},P)$. Since $V\subseteq U$ and $U\cap\delta^{\alpha}%
(h,\frac{\eta}{M},P)=\emptyset$, $V\cap\delta_{g}^{\alpha}(h,\eta
,P)=\emptyset$. In particular, $x\notin\delta_{g}^{\alpha}(h,\eta,P)$.
\end{proof}

\begin{lem}
\label{l7} Let $g,h$ be real-valued functions on $X$ and let $\eta> 0$. Let
$P$ be a closed set in $X$. Suppose that $U$ is an open set in $X$ such that
$U \cap(\cup_{a > 0}\delta^{\alpha}(h,a,P)) = \emptyset$ for some
$\alpha<\omega_{1}$. Then $U \cap\delta^{\alpha\kappa}_{g}(h,\eta,P)\subseteq
d_{\infty}^{\kappa}(g,P)$ for any $\kappa<\omega_{1}$.
\end{lem}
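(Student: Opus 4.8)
\textbf{Proof proposal for Lemma \ref{l7}.}

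The plan is to prove the inclusion $U\cap\delta^{\alpha\kappa}_g(h,\eta,P)\subseteq d_\infty^\kappa(g,P)$ by transfinite induction on $\kappa$, exactly paralleling the structure of the previous lemmas. The base case $\kappa=0$ is the trivial identity $U\cap\delta^0_g(h,\eta,P)=U\cap P\subseteq P=d_\infty^0(g,P)$. The hypothesis on $U$, namely $U\cap(\cup_{a>0}\delta^\alpha(h,a,P))=\emptyset$, is exactly what is needed to invoke Lemma \ref{l6}, which handles the single-step passage; the work is to bootstrap this up the ordinal ladder.

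For the successor step, assume $U\cap\delta^{\alpha\kappa}_g(h,\eta,P)\subseteq d_\infty^\kappa(g,P)$. Write $Q=\delta^{\alpha\kappa}_g(h,\eta,P)$, a closed set contained in $P$. Since $\alpha(\kappa+1)=\alpha\kappa+\alpha$, we have $\delta^{\alpha(\kappa+1)}_g(h,\eta,P)=\delta^\alpha_g(h,\eta,Q)$. The aim is to apply Lemma \ref{l6} with $P$ replaced by $Q$: for this I need $U\cap(\cup_{a>0}\delta^\alpha(h,a,Q))=\emptyset$, which follows from monotonicity of the derivations in the closed-set argument ($\delta^\alpha(h,a,Q)\subseteq\delta^\alpha(h,a,P)$ since $Q\subseteq P$) together with the standing hypothesis on $U$. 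Lemma \ref{l6} then gives $U\cap\delta^\alpha_g(h,\eta,Q)\subseteq d_\infty(g,Q)$. Finally, using the inductive hypothesis $U\cap Q\subseteq d_\infty^\kappa(g,P)$ and the regularity/monotonicity of $d_\infty(g,\cdot)$, one gets $d_\infty(g,Q)\cap U\subseteq d_\infty(g,U\cap Q)$-type control; more precisely $U\cap d_\infty(g,Q)=U\cap d_\infty(g,U\cap Q)\subseteq d_\infty(g,d_\infty^\kappa(g,P))=d_\infty^{\kappa+1}(g,P)$, where the first equality uses that $d_\infty(g,\cdot)$ localizes on the open set $U$ and the inclusion uses monotonicity in the set argument. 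Combining, $U\cap\delta^{\alpha(\kappa+1)}_g(h,\eta,P)\subseteq d_\infty^{\kappa+1}(g,P)$.

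For a limit ordinal $\kappa$, assuming the result for all $\kappa'<\kappa$, note $\alpha\kappa=\sup_{\kappa'<\kappa}\alpha\kappa'$ (continuity of ordinal multiplication in the right variable, valid for $\kappa$ limit), so $\delta^{\alpha\kappa}_g(h,\eta,P)=\cap_{\kappa'<\kappa}\delta^{\alpha\kappa'}_g(h,\eta,P)$, and likewise $d_\infty^\kappa(g,P)=\cap_{\kappa'<\kappa}d_\infty^{\kappa'}(g,P)$. Intersecting the inductive inclusions over $\kappa'<\kappa$ and pulling $U\cap(\cdot)$ through the intersection yields the claim at $\kappa$.

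The main obstacle I anticipate is the bookkeeping in the successor step: one must verify carefully that the hypothesis "$U$ misses $\cup_{a>0}\delta^\alpha(h,a,\cdot)$" persists when the base closed set is shrunk from $P$ to $Q=\delta^{\alpha\kappa}_g(h,\eta,P)$, and that $d_\infty(g,\cdot)$ genuinely commutes with intersecting against the fixed open set $U$ (so that $U\cap d_\infty(g,Q)$ can be rewritten in terms of $U\cap Q$ and hence fed into the inductive hypothesis). Both facts are instances of the regularity of the derivations involved, so once they are stated cleanly the induction closes with no real computation.
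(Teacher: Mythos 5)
Your proposal is correct and takes essentially the same route as the paper: induct on $\kappa$, at successor stages apply Lemma \ref{l6} to the closed set $Q=\delta^{\alpha\kappa}_{g}(h,\eta,P)$ (using monotonicity of the iterated derivation to see $U\cap(\cup_{a>0}\delta^{\alpha}(h,a,Q))=\emptyset$), and then combine $U\cap Q\subseteq d_{\infty}^{\kappa}(g,P)$ with the local definition of $d_{\infty}$ to land in $d_{\infty}^{\kappa+1}(g,P)$. Your ``localization'' step, though written with $d_{\infty}(g,U\cap Q)$ applied to a non-closed set, unpacks to exactly the paper's pointwise verification, and the limit case you spell out is the routine intersection argument the paper leaves implicit.
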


\begin{proof}
The case $\kappa=1$ holds by Lemma \ref{l6}. Assume that the lemma holds for
some $\kappa<\omega_{1}$. Suppose that $x\in U\cap\delta_{g}^{\alpha
(\kappa+1)}(h,\eta,P)$. Let $Q=\delta_{g}^{\alpha\kappa}(h,\eta,P)$. Then
$U\cap(\cup_{a>0}\delta^{\alpha}(h,a,Q))=\emptyset$. By Lemma \ref{l6},
\[
x\in U\cap\delta_{g}^{\alpha(\kappa+1)}(h,\eta,P)=U\cap\delta_{g}^{\alpha
}(h,\eta,Q)\subseteq d_{\infty}(g,Q).
\]
For any open neighborhood $V$ of $x$ and any $n\in{\mathbb{N}}$, there exists
$v\in V\cap U\cap Q$ such that $|g(v)|>n$. By the inductive hypothesis, $U\cap
Q\subseteq d_{\infty}^{\kappa}(g,P)$. Then $v\in V\cap d_{\infty}^{\kappa
}(g,P)$ and $|g(v)|>n$. This proves that $x\in d_{\infty}^{\kappa+1}(g,P)$
\end{proof}

The final derivation that we will require is the following. For each countable
ordinal $\xi$, fix a sequence $(\xi_{n})$ that strictly increases to
$\omega^{\omega^{\xi}}$. Given a real valued function $h$, a countable ordinal
$\xi$ and a closed set $P$ in $X$, define $\Omega_{h,\xi}(P)=\cap_{n}%
[\cup_{\eta>0}\delta^{{\xi_{n}}}(h,\eta,P)]^{\prime}$. Again, $\Omega_{h,\xi}$
is a regular derivation. We are now ready to take control of the part
$\delta_{g}^{\omega^{\alpha}}(h,\frac{\varepsilon}{2},P)$.

\begin{prop}
\label{p7} Let $g,h$ be real-valued functions on $X$ and let $\eta> 0$. For
any $\xi< \omega_{1}$ and any closed set $P$ in $X$,
\[
\delta^{\omega^{\omega^{\xi}}}_{g}(h,\eta,P) \subseteq\Omega_{h,\xi}(P)\cup
d_{\infty}^{\omega^{\omega^{\xi}}}(g,P).
\]

\end{prop}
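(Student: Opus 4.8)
The plan is to prove this by induction on $\xi$, reducing the limit/successor bookkeeping to the regular-derivation machinery already in place, namely Proposition \ref{p0} and Lemma \ref{l7}. The key observation is that $\delta_{g}(h,\eta,\cdot)$, $\Omega_{h,\xi}$ and $d_{\infty}(g,\cdot)$ are all regular derivations, so once we establish the one-step containment we can feed it into Proposition \ref{p0} to boost it along powers of $\omega$.

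First I would handle the base case $\xi=0$, where $\omega^{\omega^{\xi}}=\omega$. Here I want to show $\delta_{g}^{\omega}(h,\eta,P)\subseteq\Omega_{h,0}(P)\cup d_{\infty}^{\omega}(g,P)$. Fix $x$ in the left-hand side but not in $d_{\infty}^{\omega}(g,P)=\cap_{n}d_{\infty}^{n}(g,P)$; then $x\notin d_{\infty}^{n}(g,P)$ for some $n$, so there is an open neighborhood $V$ of $x$ with $V\cap d_{\infty}^{n}(g,P)=\emptyset$. I want to conclude $x\in\Omega_{h,0}(P)=\cap_{m}[\cup_{a>0}\delta^{m}(h,a,P)]'$. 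Suppose not: then for some $m$, $x$ has a neighborhood $W\subseteq V$ meeting $\cup_{a>0}\delta^{m}(h,a,P)$ only possibly at $x$ — more precisely, after shrinking, $W\cap(\cup_{a>0}\delta^{m}(h,a,P))$ is contained in $\{x\}$, and since that union is closed and decreasing I can actually arrange $W\cap(\cup_{a>0}\delta^{m}(h,a,P))=\emptyset$ by choosing $W$ to avoid $x$ if $x$ itself lies outside, or else $x$ is an isolated point of it, which one handles separately. Applying Lemma \ref{l7} with $\alpha=m$ and $\kappa=n$ gives $W\cap\delta_{g}^{mn}(h,\eta,P)\subseteq d_{\infty}^{n}(g,P)$; but $W\subseteq V$ and $V\cap d_{\infty}^{n}(g,P)=\emptyset$, so $W\cap\delta_{g}^{mn}(h,\eta,P)=\emptyset$, and since $\delta_{g}^{\omega}(h,\eta,P)\subseteq\delta_{g}^{mn}(h,\eta,P)$ this contradicts $x\in\delta_{g}^{\omega}(h,\eta,P)$. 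Hence $x\in\Omega_{h,0}(P)$, completing the base case.

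For the inductive step, assume the proposition for $\xi$ and prove it for $\xi+1$ (the case where $\omega^{\omega^{\xi+1}}=(\omega^{\omega^{\xi}})^{\omega}$, so we are iterating $\omega$ more times in the exponent). The cleanest route is to invoke Proposition \ref{p0} directly: the one-step-type inclusion I really need is
\[
\delta_{g}^{\omega^{\omega^{\xi}}}(h,\eta,P)\subseteq\Omega_{h,\xi}(P)\cup d_{\infty}^{\omega^{\omega^{\xi}}}(g,P),
\]
which is the inductive hypothesis, together with the fact that the three derivations $\delta_{g}^{\omega^{\omega^{\xi}}}(h,\eta,\cdot)$, $\Omega_{h,\xi}$, $d_{\infty}^{\omega^{\omega^{\xi}}}(g,\cdot)$ are all regular (each is a transfinite iterate of a regular derivation, and iterates of regular derivations are regular — this should be recorded as a small lemma if not already available). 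Proposition \ref{p0} with $\alpha$ chosen so that $(\omega^{\omega^{\xi}})\cdot\omega^{\alpha}=\omega^{\omega^{\xi+1}}$, i.e. $\alpha=\omega^{\xi}$, then yields
\[
(\delta_{g}^{\omega^{\omega^{\xi}}})^{\omega^{\omega^{\xi}}}(h,\eta,P)\subseteq(\Omega_{h,\xi})^{\omega^{\omega^{\xi}}}(P)\cup(d_{\infty}^{\omega^{\omega^{\xi}}})^{\omega^{\omega^{\xi}}}(g,P),
\]
i.e. $\delta_{g}^{\omega^{\omega^{\xi+1}}}(h,\eta,P)\subseteq\Omega_{h,\xi}^{\omega^{\omega^{\xi}}}(P)\cup d_{\infty}^{\omega^{\omega^{\xi+1}}}(g,P)$. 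It then remains to check $\Omega_{h,\xi}^{\omega^{\omega^{\xi}}}(P)\subseteq\Omega_{h,\xi+1}(P)$, which is a matter of unwinding the definitions of the fixed cofinal sequences: iterating $\Omega_{h,\xi}$ up to $\omega^{\omega^{\xi}}$ forces one to pass through $\delta^{\eta}(h,a,\cdot)$ for arbitrarily large $\eta$ below $\omega^{\omega^{\xi}}\cdot\omega^{\omega^{\xi}}=\omega^{\omega^{\xi+1}}$, hence through the $\xi{+}1$-sequence $((\xi+1)_{n})$. The limit case of the induction on $\xi$ is handled by taking intersections over a cofinal sequence, using that both $\Omega_{h,\cdot}$ and the $d_{\infty}$-iterates are monotone in the obvious sense.

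The main obstacle I anticipate is the matching of the fixed cofinal sequences across the induction — concretely, verifying $\Omega_{h,\xi}^{\omega^{\omega^{\xi}}}(P)\subseteq\Omega_{h,\xi+1}(P)$ and the analogous limit-case containment — since the definition of $\Omega_{h,\xi}$ hard-codes a particular sequence $(\xi_{n})\uparrow\omega^{\omega^{\xi}}$, and one must show that iterating the $\xi$-level derivation enough times lands inside the $(\xi{+}1)$-level one regardless of the specific sequences chosen. The standard device here is that $\delta^{\beta}(h,a,\cdot)\subseteq\delta^{\beta'}(h,a,\cdot)$ for $\beta\geq\beta'$ and that a transfinite iterate of $P\mapsto[\cup_{a>0}\delta^{\beta}(h,a,P)]'$ up to height $\lambda$ contains $[\cup_{a>0}\delta^{\beta\cdot\lambda}(h,a,P)]'$-type sets, so choosing $\beta=\xi_{n}$ and $\lambda$ large yields exponents cofinal in $\omega^{\omega^{\xi+1}}$; the bookkeeping is routine but must be done carefully. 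Everything else is a direct appeal to Lemma \ref{l7}, Proposition \ref{p0}, and the regularity of the derivations involved.
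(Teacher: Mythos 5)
Your base case ($\xi=0$) is essentially the correct argument, but the inductive step on $\xi$ has a genuine gap, and in fact the induction is the wrong frame altogether. First, the ordinal arithmetic in the reduction is off: applying Proposition \ref{p0} with $\alpha=\omega^{\xi}$ to the derivation $\delta_{g}^{\omega^{\omega^{\xi}}}(h,\eta,\cdot)$ iterates it $\omega^{\omega^{\xi}}$ times and yields $\delta_{g}^{\omega^{\omega^{\xi}}\cdot\omega^{\omega^{\xi}}}=\delta_{g}^{\omega^{\omega^{\xi}\cdot 2}}$, which is far short of $\delta_{g}^{\omega^{\omega^{\xi+1}}}$; to reach exponent $\omega^{\omega^{\xi+1}}$ you would need $\alpha=\omega^{\xi+1}$, which makes the $\Omega$-term on the right an $\omega^{\omega^{\xi+1}}$-fold iterate. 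Second, and more seriously, the bridge you defer --- $\Omega_{h,\xi}^{\lambda}(P)\subseteq\Omega_{h,\xi+1}(P)$ for the relevant iterate $\lambda$ --- is the actual crux and is not routine bookkeeping: each application of $\Omega_{h,\xi}$ only records accumulation of $\cup_{a>0}\delta^{\xi_{n}}(h,a,\cdot)$, where the witnessing amplitude $a$ may shrink from level to level, so iterates of $P\mapsto[\cup_{a>0}\delta^{\xi_{n}}(h,a,P)]'$ cannot simply be spliced into a single derivation $\delta^{\mu}(h,a,P)$ at a fixed amplitude with $\mu$ cofinal in $\omega^{\omega^{\xi+1}}$. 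No lemma of the kind you invoke (``a $\lambda$-fold iterate contains $[\cup_{a>0}\delta^{\beta\cdot\lambda}(h,a,P)]'$-type sets'') is available, and your proof stands or falls on it. (The isolated-point caveat you raise when shrinking $W$ is a separate, minor matter.)

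The statement actually has a direct proof, uniform in $\xi$, which is your base case run with a transfinite $\kappa$: if $x\notin\Omega_{h,\xi}(P)$, there are $n$ and a neighborhood $U$ of $x$ with $U\cap(\cup_{a>0}\delta^{\xi_{n}}(h,a,P))=\emptyset$; apply Lemma \ref{l7} with $\alpha=\xi_{n}$ and $\kappa=\omega^{\omega^{\xi}}$, together with the absorption identity $\xi_{n}\cdot\omega^{\omega^{\xi}}=\omega^{\omega^{\xi}}$ (valid since $1\leq\xi_{n}<\omega^{\omega^{\xi}}$ and $\gamma+\omega^{\xi}=\omega^{\xi}$ for $\gamma<\omega^{\xi}$), to get $U\cap\delta_{g}^{\omega^{\omega^{\xi}}}(h,\eta,P)=U\cap\delta_{g}^{\xi_{n}\cdot\omega^{\omega^{\xi}}}(h,\eta,P)\subseteq d_{\infty}^{\omega^{\omega^{\xi}}}(g,P)$, hence $x\in d_{\infty}^{\omega^{\omega^{\xi}}}(g,P)$. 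This is precisely why $\Omega_{h,\xi}$ is defined via a sequence cofinal in $\omega^{\omega^{\xi}}$: Lemma \ref{l7} already does all the transfinite work, so no induction on $\xi$, and no comparison between $\Omega_{h,\xi}$ and $\Omega_{h,\xi+1}$, is needed.
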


\begin{proof}
Suppose that $x\in\delta^{\omega^{\omega^{\xi}}}_{g}(h,\eta,P)$ and that
$x\notin\Omega_{h,\xi}(P)$. There exists an open neighborhood $U$ of $x$ and
$n\in{\mathbb{N}}$ such that $U \cap(\cup_{a > 0}\delta^{{\xi_{n}}}(h,a,P)) =
\emptyset$. Taking $\alpha= \xi_{n}$ and $\kappa= \omega^{\omega^{\xi}}$ in
Lemma \ref{l7}, we have
\[
U \cap\delta^{\omega^{\omega^{\xi}}}_{g}(h,\eta,P) = U \cap\delta^{{\xi_{n}%
}\cdot\omega^{\omega^{\xi}}}_{g}(h,\eta,P)\subseteq d_{\infty}^{\omega
^{\omega^{\xi}}}(g,P).
\]
In particular, $x\in d_{\infty}^{\omega^{\omega^{\xi}}}(g,P).$
\end{proof}

Since any iterate of a regular derivation is regular, by Propositions \ref{p0}
and \ref{p7}, we have

\begin{cor}
\label{c8} Let $g,h$ be real-valued functions on $X$ and let $\eta> 0$. For
any $\alpha, \xi< \omega_{1}$ and any closed set $P$ in $X$,
\[
\delta^{\omega^{\omega^{\xi}}\cdot\omega^{\alpha}}_{g}(h,\eta,P)
\subseteq\Omega_{h,\xi}^{\omega^{\alpha}}(P)\cup d_{\infty}^{\omega
^{\omega^{\xi}}\cdot\omega^{\alpha}}(g,P).
\]

\end{cor}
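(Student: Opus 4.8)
The statement to prove is Corollary~\ref{c8}, which asserts that
\[
\delta^{\omega^{\omega^{\xi}}\cdot\omega^{\alpha}}_{g}(h,\eta,P)
\subseteq\Omega_{h,\xi}^{\omega^{\alpha}}(P)\cup d_{\infty}^{\omega^{\omega^{\xi}}\cdot\omega^{\alpha}}(g,P)
\]
for real-valued $g,h$ on $X$, $\eta>0$, countable ordinals $\alpha,\xi$, and any closed $P$.

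The plan is to deduce this directly from Proposition~\ref{p7} together with the abstract iteration principle in Proposition~\ref{p0}. First I would record the observation, already flagged in the sentence preceding the corollary, that any transfinite iterate of a regular derivation is again regular; this is immediate from the definitions since conditions (a) and (b) are preserved under composition and under taking intersections over successor/limit stages. In particular, $\delta_{g}(h,\eta,\cdot)^{\omega^{\omega^{\xi}}}$, $\Omega_{h,\xi}$, and $d_{\infty}(g,\cdot)^{\omega^{\omega^{\xi}}}$ are all regular derivations.

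Next I would apply Proposition~\ref{p7}, which gives exactly
\[
\delta^{\omega^{\omega^{\xi}}}_{g}(h,\eta,P)\subseteq\Omega_{h,\xi}(P)\cup d_{\infty}^{\omega^{\omega^{\xi}}}(g,P)
\]
for every closed set $P$. Setting $\delta_{1}=\delta_{g}(h,\eta,\cdot)^{\omega^{\omega^{\xi}}}$, $\delta_{2}=\Omega_{h,\xi}$, and $\delta_{3}=d_{\infty}(g,\cdot)^{\omega^{\omega^{\xi}}}$, all three are regular by the previous paragraph, and Proposition~\ref{p7} says precisely $\delta_{1}(P)\subseteq\delta_{2}(P)\cup\delta_{3}(P)$ for all closed $P$. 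Proposition~\ref{p0} then yields $\delta_{1}^{\omega^{\alpha}}(P)\subseteq\delta_{2}^{\omega^{\alpha}}(P)\cup\delta_{3}^{\omega^{\alpha}}(P)$ for every countable ordinal $\alpha$. Unwinding the notation, the left side is $\bigl(\delta_{g}(h,\eta,\cdot)^{\omega^{\omega^{\xi}}}\bigr)^{\omega^{\alpha}}(P)=\delta_{g}^{\omega^{\omega^{\xi}}\cdot\omega^{\alpha}}(h,\eta,P)$, the first term on the right is $\Omega_{h,\xi}^{\omega^{\alpha}}(P)$, and the second is $\bigl(d_{\infty}(g,\cdot)^{\omega^{\omega^{\xi}}}\bigr)^{\omega^{\alpha}}(P)=d_{\infty}^{\omega^{\omega^{\xi}}\cdot\omega^{\alpha}}(g,P)$, which is the claimed inclusion.

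The only point requiring a little care is the identity $(\delta^{\mu})^{\nu}=\delta^{\mu\cdot\nu}$ for transfinite iterates of a single derivation, used three times above; this is a standard fact proved by induction on $\nu$ (the successor step uses $\mu\cdot(\nu+1)=\mu\cdot\nu+\mu$ and the limit step uses continuity of ordinal multiplication from the right in the second argument together with $\delta^{\lambda}=\cap_{\nu<\lambda}\delta^{\nu}$). I do not expect any genuine obstacle here: the corollary is a formal consequence of the two cited propositions, and the proof is essentially the one-line argument ``apply Proposition~\ref{p0} to the regular derivations appearing in Proposition~\ref{p7}'', which is exactly what the sentence introducing the corollary announces.
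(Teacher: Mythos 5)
Your proposal is correct and matches the paper's argument exactly: the corollary is stated there as an immediate consequence of Propositions~\ref{p0} and~\ref{p7}, using the fact that iterates of regular derivations are regular, which is precisely your route. Your added care about the identity $(\delta^{\mu})^{\nu}=\delta^{\mu\cdot\nu}$ and the preservation of regularity under iteration (where the limit step uses that the iterates form a decreasing family) only makes explicit what the paper leaves implicit.
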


For the remaining results, we adopt the following notation. Let $\kappa$ be a
nonzero countable ordinal. Then $\kappa$ has a standard representation
\[
\kappa=\omega^{\kappa_{1}}m_{1}+\cdots+\omega^{\kappa_{k}}m_{k},
\]
where $\kappa_{1}>\cdots>\kappa_{k}$ are countable ordinals and $m_{1}%
,\dots,m_{k}\in{\mathbb{N}}$. Define the pair $(\xi,\alpha)$ by $\xi
=\kappa_{1}$ and $\alpha=\omega^{\kappa_{1}}(m_{1}-1)+\omega^{\kappa_{2}}%
m_{2}+ \cdots+\omega^{\kappa_{k}}m_{k}$. If $\kappa$ is related to the pair
$(\xi,\alpha)$ as above, we write $\kappa\sim(\xi,\alpha)$. Note that in this
case $\kappa= \omega^{\xi}+\alpha$.

\begin{thm}
\label{t9} Let $h$ be a real-valued function on $X$ and let $\kappa\ $be a
nonzero countable ordinal such that $\kappa\symbol{126}\left(  \xi
,\alpha\right)  $. Suppose that $\cap_{M}h_{M}^{\omega^{\kappa}}(X)=\emptyset$
and that $\Omega_{h,\xi}^{\omega^{\alpha}}(X)=\emptyset$. Then $h\in
{\mathcal{M}}(\kappa,\kappa)$.
\end{thm}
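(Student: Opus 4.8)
The goal is to show that if $f\in{\mathfrak{B}}_1^\kappa(X)$, i.e. $\beta(f)\leq\omega^\kappa$, then $\beta(hf)\leq\omega^\kappa$. By Proposition~\ref{p4.3}, it suffices to prove that $\delta^{\omega^\kappa}(hf,\varepsilon,X)=\emptyset$ for every $\varepsilon>0$. I would fix $\varepsilon>0$ and chase the inclusions established in the preceding propositions, iterated to the right ordinal heights, to trap $\delta^{\omega^\kappa}(hf,\varepsilon,X)$ inside a union of sets each of which is empty under the hypotheses. Throughout, write $g=f$ for the arbitrary multiplicand and recall $\kappa=\omega^\xi+\alpha$.

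The first step is to apply Proposition~\ref{p1}: for any closed $P$,
\[
\delta^{\omega^\alpha}(hf,\varepsilon,P)\subseteq\delta^{\omega^\alpha}_f(h,\tfrac{\varepsilon}{2},P)\cup\delta^{\omega^\alpha}_h(f,\tfrac{\varepsilon}{2},P).
\]
Iterating this regular-derivation inclusion another $\omega^{\omega^\xi}$ times (using Proposition~\ref{p0} applied to the three regular derivations $\delta(hf,\varepsilon,\cdot)$, $\delta_f(h,\varepsilon/2,\cdot)$, $\delta_h(f,\varepsilon/2,\cdot)$, together with $\omega^{\omega^\xi}\cdot\omega^\alpha=\omega^\kappa$ since $\xi=\kappa_1\ge\kappa_2\ge\cdots$ so $\omega^{\omega^\xi}$ absorbs $\omega^\alpha$ on the left appropriately) I would obtain
\[
\delta^{\omega^\kappa}(hf,\varepsilon,X)\subseteq\delta^{\omega^\kappa}_f(h,\tfrac{\varepsilon}{2},X)\cup\delta^{\omega^\kappa}_h(f,\tfrac{\varepsilon}{2},X).
\]
The second term on the right is handled directly: since $f\in{\mathfrak{B}}_1^\kappa(X)$ and $\cap_M h_M^{\omega^\kappa}(X)=\emptyset$, pick (by the same intersection-over-$M$ trick used implicitly, or rather: since the sets $h_M^{\omega^\kappa}(X)$ decrease with $M$ and have empty intersection, one of them is empty for $M$ large—wait, they need not be empty individually) I must instead argue: for each $M$, Proposition~\ref{p2'} gives $\delta_h^{\omega^\kappa}(f,\varepsilon/2,X)\subseteq h_M^{\omega^\kappa}(X)\cup\delta^{\omega^\kappa}(f,\varepsilon/(2M),X)$, and $\delta^{\omega^\kappa}(f,\varepsilon/(2M),X)=\emptyset$ because $\beta(f)\leq\omega^\kappa$ (Proposition~\ref{p4.3}); intersecting over all $M$ and using $\cap_M h_M^{\omega^\kappa}(X)=\emptyset$ yields $\delta_h^{\omega^\kappa}(f,\varepsilon/2,X)=\emptyset$. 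So the second term vanishes. For the first term, apply Corollary~\ref{c8} with the roles of $g,h$ there taken by $f,h$ here: $\delta^{\omega^{\omega^\xi}\cdot\omega^\alpha}_f(h,\varepsilon/2,X)\subseteq\Omega_{h,\xi}^{\omega^\alpha}(X)\cup d_\infty^{\omega^{\omega^\xi}\cdot\omega^\alpha}(f,X)$. The hypothesis $\Omega_{h,\xi}^{\omega^\alpha}(X)=\emptyset$ kills the first piece, and the second piece is empty because $d_\infty(f,\cdot)^{\,\beta_0}$ is dominated by $\delta(f,a,\cdot)$-type derivations—more precisely $d_\infty(f,P)\subseteq\cup_{a>0}\delta(f,a,P)$ in a suitable sense—so that $d_\infty^{\omega^\kappa}(f,X)=\emptyset$ whenever $\beta(f)\leq\omega^\kappa$; I would verify this last domination by a short direct argument (if $x\in d_\infty(f,P)$ then $f$ is unbounded near $x$ in $P$, forcing arbitrarily large oscillation, hence $x\in\delta(f,a,P)$ for every $a$).

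Assembling: $\delta^{\omega^\kappa}(hf,\varepsilon,X)\subseteq\emptyset\cup\emptyset=\emptyset$, which gives $\beta(hf,\varepsilon)\leq\omega^\kappa$; since $\varepsilon>0$ is arbitrary, $\beta(hf)\leq\omega^\kappa$, i.e. $h\in{\mathcal{M}}(\kappa,\kappa)$. \textbf{The main obstacle} I anticipate is bookkeeping the ordinal arithmetic in the iteration step—confirming that $\omega^{\omega^\xi}\cdot\omega^\alpha$ (and not some larger or incomparable ordinal) is exactly $\omega^\kappa$, and that the regularity-based iteration of Proposition~\ref{p1} legitimately reaches this height. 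Writing $\kappa=\omega^{\kappa_1}m_1+\cdots$ with $\xi=\kappa_1$, one has $\omega^\kappa=\omega^{\omega^{\kappa_1}}\cdot\omega^{\alpha}$ precisely because $\alpha<\omega^{\kappa_1}\cdot\omega=\omega^{\kappa_1+1}\le\omega^{\omega^{\kappa_1}}$ when $\kappa_1\ge 1$ (and the degenerate case $\kappa_1=0$, i.e. $\kappa$ finite, is checked separately), so the exponent laws apply cleanly; the remaining secondary nuisance is making sure $d_\infty^{\omega^\kappa}(f,X)=\emptyset$ and $\delta_h^{\omega^\kappa}(f,\varepsilon/2,X)=\emptyset$ are genuinely consequences of $\beta(f)\le\omega^\kappa$ rather than requiring a strict inequality.
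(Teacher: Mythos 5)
Your proposal is correct and is essentially the paper's own proof: split $\delta^{\omega^{\kappa}}(hf,\varepsilon,X)$ via Proposition \ref{p1}, kill $\delta_h^{\omega^{\kappa}}(f,\cdot,X)$ using Proposition \ref{p2'} together with $\beta(f)\leq\omega^{\kappa}$ and $\cap_M h_M^{\omega^{\kappa}}(X)=\emptyset$, and kill $\delta_f^{\omega^{\kappa}}(h,\cdot,X)$ using Corollary \ref{c8}, the hypothesis $\Omega_{h,\xi}^{\omega^{\alpha}}(X)=\emptyset$, and $d_\infty^{\omega^{\kappa}}(f,X)=\emptyset$ (whose justification, which the paper only asserts, you correctly supply). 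The obstacle you anticipate is not one: Proposition \ref{p1} applies directly with exponent $\kappa$, and $\omega^{\kappa}=\omega^{\omega^{\xi}}\cdot\omega^{\alpha}$ follows unconditionally from $\kappa=\omega^{\xi}+\alpha$ and the law $\omega^{a+b}=\omega^{a}\cdot\omega^{b}$, with no absorption condition needed.
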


\begin{proof}
Let $g$ be a real-valued function on $X$ so that $\beta(g)\leq\omega^{\kappa}%
$. Fix $\varepsilon>0$. By Proposition \ref{p2'}, for any $M<\infty$,
$\delta_{h}^{\omega^{\kappa}}(g,\varepsilon,X)\subseteq h_{M}^{\omega^{\kappa
}}(X)$. Hence $\delta_{h}^{\omega^{\kappa}}(g,\varepsilon,X)=\emptyset$. Next,
from $\beta(g)\leq\omega^{\kappa}$, we see that $d_{\infty}^{\omega^{\kappa}%
}(g,X)=\emptyset$. Since $\kappa=\omega^{\xi}+\alpha$, by Corollary \ref{c8},
\[
\delta_{g}^{\omega^{\kappa}}(h,\varepsilon,X)=\delta_{g}^{\omega^{\omega^{\xi
}}\cdot\omega^{\alpha}}(h,\varepsilon,X)=\emptyset.
\]
Thus $\delta^{\omega^{\kappa}}(gh,2\varepsilon,X)=\emptyset$ by Proposition
\ref{p1}. Since $\varepsilon>0$ is arbitrary, it follows that $\beta
(gh)\leq\omega^{\kappa}$.
\end{proof}

In the remainder of the section, we prove the converse to Theorem \ref{t9}.
Let $X$ be a Polish space. Fix a metric $d$ that generates the topology on
$X$. Denote by $B(x,\varepsilon)$ the open ball, with respect to $d$, of
radius $\varepsilon$ centered at a point $x\in X$. Recall that the set of
accumulation points of a set $A$ is denoted by $A^{\prime}$. For any countable
ordinal $\kappa$, the $\kappa$-th iteration of this operation is denoted by
$A^{(\kappa)}$.

\begin{lem}
\label{l4.13} Let $x\in h_{M}^{\kappa}( P) $ for some closed set $P$ and let
$M$ be a nonnegative real number. For any $\varepsilon>0,$ there exists a
compact set $F\subseteq B( x,\varepsilon) $ such that $F$ is homeomorphic to
$[ 1,\omega^{\kappa}] $, $\vert h( u) \vert>M$ for all $u\in F\backslash\{ x\}
$ and that $h_{M}^{\kappa}( F) =\{ x\} = F^{(\kappa)} .$
\end{lem}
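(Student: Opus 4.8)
The statement says: if $x \in h_M^\kappa(P)$ for a closed set $P$, then inside any prescribed ball $B(x,\varepsilon)$ we can find a compact set $F$ homeomorphic to the ordinal interval $[1,\omega^\kappa]$, with $|h(u)| > M$ on $F \setminus \{x\}$, and satisfying $h_M^\kappa(F) = \{x\} = F^{(\kappa)}$. The natural approach is transfinite induction on $\kappa$. I would first observe that the point $x$ of $F$ that plays the role of the top ordinal $\omega^\kappa$ is forced: $[1,\omega^\kappa]$ has a unique point of Cantor--Bendixson rank $\kappa$ (namely $\omega^\kappa$ itself), so we want $F^{(\kappa)} = \{x\}$, and simultaneously we want the $h_M$-derivative to collapse to the same point. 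Since $h_M(Q) = (Q \cap \{|h| > M\})'$, once we arrange $|h| > M$ on $F \setminus \{x\}$ the two derivations $h_M(\cdot)$ and $(\cdot)'$ agree on subsets of $F$ except possibly at $x$ itself; this is what lets the single construction serve both requirements, and it is worth recording as a preliminary remark.

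The base case $\kappa = 0$ is trivial: $x \in h_M^0(P) = P$, and $F = \{x\}$ works. For the successor step $\kappa + 1$, use $x \in h_M^{\kappa+1}(P) = (h_M^\kappa(P) \cap \{|h|>M\})'$, so there is a sequence $x_n \to x$ with $x_n \in h_M^\kappa(P)$, $|h(x_n)| > M$, and (shrinking) $x_n \in B(x, \varepsilon/2)$ with the $x_n$ distinct and $d(x_n, x)$ strictly decreasing to $0$. For each $n$ apply the inductive hypothesis at $x_n$ with a radius $\varepsilon_n$ small enough that the balls $B(x_n, \varepsilon_n)$ are pairwise disjoint, avoid $x$, stay inside $B(x,\varepsilon)$, and (crucially) are small enough that $|h| > M$ is not violated — but that last point is automatic since the inductively produced $F_n$ already has $|h| > M$ off its own top point $x_n$, and $|h(x_n)| > M$ too, so $|h| > M$ on all of $F_n$. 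Set $F = \{x\} \cup \bigcup_n F_n$. One checks $F$ is compact (the $F_n$ shrink toward $x$), and that $F \cong [1, \omega^{\kappa+1}]$ because $[1,\omega^{\kappa+1}]$ is exactly a convergent sequence of copies of $[1,\omega^\kappa]$ together with the limit point; the Cantor--Bendixson computation $F^{(\kappa)} = \{x_n : n\} \cup \{x\}$ and $F^{(\kappa+1)} = \{x\}$ follows from $F_n^{(\kappa)} = \{x_n\}$, and the same computation with $h_M$ in place of $(\cdot)'$ gives $h_M^{\kappa+1}(F) = \{x\}$.

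For the limit step, write $\kappa = \sup_n \kappa_n$ with $\kappa_n$ strictly increasing. Since $x \in h_M^\kappa(P) = \bigcap_n h_M^{\kappa_n}(P)$ and $h_M^\kappa(P) = (h_M^\kappa(P) \cap \{|h|>M\})'$ (it is itself a fixed point of one more derivative once things stabilize — or more carefully, $x$ being in $h_M^{\kappa+1}(P)$ can be arranged, or one argues directly), pick a sequence $x_n \to x$, $x_n \ne x$, in $B(x,\varepsilon/2)$ with $x_n \in h_M^{\kappa_n}(P)$ and $|h(x_n)| > M$, the $d(x_n,x)$ strictly decreasing to $0$. Apply the inductive hypothesis at $x_n$ with ordinal $\kappa_n$ and a radius $\varepsilon_n$ making the $B(x_n,\varepsilon_n)$ pairwise disjoint, disjoint from $x$, and inside $B(x,\varepsilon)$. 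Put $F = \{x\} \cup \bigcup_n F_n$. Compactness is as before. That $F \cong [1,\omega^\kappa]$ uses that $\omega^\kappa = \sup_n \omega^{\kappa_n}$ (valid since $\kappa$ is a limit, so $\omega^\kappa$ is a limit of the strictly increasing $\omega^{\kappa_n}$), realized as a convergent sequence of the copies $[1,\omega^{\kappa_n}]$. For the rank computation, note $F^{(\kappa_n)} \supseteq F_n^{(\kappa_n)} = \{x_n\}$ while for $m > n$ the piece $F_m$ has CB rank $\kappa_m \geq \kappa_n$ so still contributes, hence $F^{(\kappa_n)}$ contains a tail of $(x_m)$ plus $x$; intersecting over $n$ kills every $x_m$ and leaves $F^{(\kappa)} = \{x\}$; and since $|h| > M$ throughout $F \setminus \{x\}$, the identical bookkeeping gives $h_M^\kappa(F) = \{x\}$.

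**Main obstacle.** The delicate point is the bookkeeping in the limit step: one must choose the radii $\varepsilon_n$ (and possibly pass to a subsequence of the $x_n$) so that the copies $F_n$ genuinely converge to $x$ in the right topological way — i.e. $\operatorname{diam}(F_n) \to 0$ and $d(F_n, x) \to 0$ but $F_n$ stays away from $x$ — so that the resulting $F$ is homeomorphic to $[1,\omega^\kappa]$ and not to some space with an extra limit point. This requires $\varepsilon_n \le \tfrac12 d(x_n, x)$ (say) together with separation of consecutive $x_n$, and then verifying that the only accumulation point of $\bigcup_n F_n$ outside the individual $F_n$ is $x$. The rank identities $F^{(\kappa)} = \{x\}$ and $h_M^\kappa(F) = \{x\}$ then drop out of the standard Cantor--Bendixson analysis of an ordinal interval, the $h$-condition being carried along for free because it was arranged on each $F_n$ at the inductive stage.
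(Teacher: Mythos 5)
Your proposal is correct and takes essentially the same route as the paper: the same transfinite induction, attaching shrinking copies $F_n$ obtained from the inductive hypothesis inside disjoint balls around points $x_n$ chosen in the lower-level derived sets with $|h(x_n)|>M$, setting $F=\{x\}\cup\bigcup_n F_n$, and doing the same Cantor--Bendixson and $h_M$ bookkeeping. The only spot to tighten is the limit step: the existence of $x_n\in h_M^{\kappa_n}(P)$ with $|h(x_n)|>M$ converging nontrivially to $x$ follows simply because $\kappa_n+1<\kappa$ (so $x\in h_M^{\kappa}(P)\subseteq h_M^{\kappa_n+1}(P)=(h_M^{\kappa_n}(P)\cap\{|h|>M\})'$), which is exactly what the paper uses; your parenthetical justifications (that $h_M^{\kappa}(P)$ is a fixed point, or that $x\in h_M^{\kappa+1}(P)$ can be arranged) are neither needed nor true in general.
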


\begin{proof}
The case $\kappa=0$ holds trivially if we take $F=\{x\}$.

Assume that the lemma holds for some countable ordinal $\kappa$. If $x\in
h_{M}^{\kappa+1}( P) ,$ then there exists a sequence $( x_{k}) \subseteq
h_{M}^{\kappa}( P) \cap B( x,\frac{\varepsilon}{2}) $ converging to $x$ such
that $\vert h( x_{k}) \vert>M$ and $d_{k+1}<\frac{d_{k}}{4}$ for all $k$,
where $d_{k}=d( x_{k},x) $. For each $k$, there exists a compact
$F_{k}\subseteq B( x_{k},\frac{d_{k}}{2}) $, homeomorphic to $[ 1,\omega
^{\kappa}] ,$ such that $\vert h( u) \vert>M$ for all $u\in F_{k}\backslash\{
x_{k}\} $ and $h_{M}^{\kappa}( F_{k}) = \{ x_{k}\} = F_{k}^{(\kappa)} .$
Clearly $F_{k}\cap F_{l}=\emptyset$ if $k\neq l$ and $\overline{\cup F_{k}}=(
\cup F_{k}) \cup\{ x\} \subseteq B(x,\varepsilon) .$ Thus $F=( \cup F_{k})
\cup\{ x\} $ is a compact subset of $B(x,\varepsilon)$ that is homeomorphic to
$[ 1,\omega^{\kappa+1}] .$ Also, $\vert h( u) \vert>M$ for all $u\in
F\backslash\{ x\} $. As each $F_{k}$ is relatively clopen in $F$,
$h_{M}^{\kappa}( F) \cap F_{k}=h_{M}^{\kappa}(F_{k})= \{x_{k}\}$ for all $k$.
Since $h_{M}^{\kappa}(F)$ is closed, it follows that $h_{M}^{\kappa}( F)
=\cup_{k}\{ x_{k}\} \cup\{ x\} $. Hence $h_{M}^{\kappa+1}( F) =\{ x\} .$
Similarly, $F^{(\kappa+1)} = \{x\}$.

Suppose that $\kappa$ is a limit ordinal and that the lemma holds for all
$\kappa^{\prime}<\kappa$. Choose a sequence of ordinals $(\kappa_{k})$ that
strictly increases to $\kappa$. If $x\in h_{M}^{\kappa}( P) $, then $x\in
\cap_{k}h_{M}^{\kappa_{k}+1}( P) $. There exists a sequence $( x_{k}) $ in
$P\cap B(x,\frac{\varepsilon}{2})$ such that $x_{k}\in h_{M}^{\kappa_{k}}( P)
,$ $\vert h( x_{k}) \vert>M$ and $d_{k+1}<\frac{1}{4}d_{k}$, where
$d_{k}=d(x_{k},x)$. By the inductive hypothesis, there are compact sets
$F_{k}\subseteq B( x_{k},\frac{d_{k}}{2}) ,$ homeomorphic to $[ 1,\omega
^{\kappa_{k}}] ,$ such that $\vert h( u) \vert>M$ for all $u\in F_{k}%
\backslash\{ x_{k}\} $ and that $h_{M}^{\kappa_{k}}( F_{k})= \{ x_{k}\} =
F_{k}^{(\kappa_{k})}.$ It is easy to see that $F=( \cup F_{k}) \cup\{ x\} $ is
a compact set in $B(x,\varepsilon)$ that is homeomorphic to $[ 1,\omega
^{\kappa}] .$ By choice, $\vert h( u) \vert>M$ for all $u\in F\backslash\{ x\}
.$ For any $j>k$,
\[
h_{M}^{\kappa_{j}}( F) \cap F_{k}=h_{M}^{\kappa_{j}}( F_{k}) =\emptyset.
\]
Thus $h_{M}^{\kappa_{j}}( F) \subseteq\cup_{k\geq j}F_{k}\cup\{x\}$. Hence
\[
h_{M}^{\kappa}(F)=\cap_{j}h_{M}^{\kappa_{j}}( F) \subseteq\cap_{j}(\cup_{k\geq
j}F_{k}\cup\{x\})=\{x\}.
\]
On the other hand, since $x_{k}\in h_{M}^{\kappa_{k}}(F_{k})\subseteq
h_{M}^{\kappa_{k}}(F)$ for all $k$, and the latter set is closed, $x\in
h_{M}^{\kappa_{k}}(F)$ for all $k$. Thus $x\in h_{M}^{\kappa}(F)$. This shows
that $h_{M}^{\kappa}(F) = \{x\}$. Similarly, $F^{(\kappa)} = \{x\}$.
\end{proof}

The next proposition gives one half of the converse to Theorem \ref{t9}.

\begin{prop}
\label{p9.5}Let $h$ be a real-valued function on $X$ and let $\kappa$ be a
nonzero countable ordinal. Suppose that $\cap_{M}h_{M}^{\omega^{\kappa}%
}(X)\neq\emptyset$. Then $h\notin{\mathcal{M}}(\kappa,\kappa)$.
\end{prop}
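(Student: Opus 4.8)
The plan is to manufacture a single function $g\in\mathfrak{B}_1^{\kappa}(X)$ with $\beta(hg)>\omega^{\kappa}$; since $g\in\mathfrak{B}_1^{\kappa}(X)$ but $hg\notin\mathfrak{B}_1^{\kappa}(X)$, this gives $h\notin\mathcal M(\kappa,\kappa)$. Fix $x_{0}\in\bigcap_{M}h_{M}^{\omega^{\kappa}}(X)$, put $M_{n}=2^{n}$, and fix ordinals $\rho_{n}\uparrow\omega^{\kappa}$ strictly (possible as $\operatorname{cf}(\omega^{\kappa})=\omega$ for $\kappa\geq1$). Since $\omega^{\kappa}$ is a limit ordinal, $h_{M}^{\omega^{\kappa}}(X)\subseteq h_{M}^{\rho_{n}+1}(X)=\bigl(h_{M}^{\rho_{n}}(X)\cap\{|h|>M\}\bigr)'$, so $x_{0}$ is an accumulation point of $h_{M_{n}}^{\rho_{n}}(X)\cap\{|h|>M_{n}\}$. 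Recursively I would pick points $z_{n}\neq x_{0}$ in that set with $z_{n}\to x_{0}$, together with radii $\varepsilon_{n}>0$ chosen so small that the balls $B(z_{n},\varepsilon_{n})$ are pairwise disjoint, omit $x_{0}$, and shrink to $x_{0}$; applying Lemma \ref{l4.13} at $z_{n}$ with $P=X$, ordinal $\rho_{n}$ and threshold $M_{n}$ produces a compact $K_{n}\subseteq B(z_{n},\varepsilon_{n})$ homeomorphic to $[1,\omega^{\rho_{n}}]$ with $|h|>M_{n}$ on all of $K_{n}$ (using also $|h(z_{n})|>M_{n}$) and $K_{n}^{(\rho_{n})}=\{z_{n}\}$. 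Then $K:=\{x_{0}\}\cup\bigcup_{n}K_{n}$ is compact, each $K_{n}$ is relatively clopen in $K$, and since $z_{m}\in K^{(\rho_{m})}$ with $z_{m}\to x_{0}$ one gets $x_{0}\in K^{(\rho)}$ for every $\rho<\omega^{\kappa}$, hence $x_{0}\in K^{(\omega^{\kappa})}$.

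The combinatorial ingredient, which I would isolate and prove by transfinite induction on $\lambda$ (splitting $[1,\omega^{\lambda}]$ into countably many relatively clopen pieces of strictly smaller type and assembling, with a free parity choice at the top of each piece), is: for every countable $\lambda$ there is $B\subseteq[1,\omega^{\lambda}]$ with $D^{\rho}(\chi_{B},1,[1,\omega^{\lambda}])=[1,\omega^{\lambda}]^{(\rho)}$ for all $\rho\leq\lambda$ --- that is, $\chi_{B}$ oscillates maximally at every level. Transporting this through the homeomorphisms, choose $B_{n}\subseteq K_{n}$ with $D^{\rho}(\chi_{B_{n}},1,K_{n})=K_{n}^{(\rho)}$ for all $\rho\leq\rho_{n}$. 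Each $K_{n}$ is countable and relatively clopen in the compact set $K$, so each $B_{n}$ is simultaneously $F_{\sigma}$ and $G_{\delta}$ in $X$ and $\chi_{B_{n}}\in\mathfrak{B}_{1}(X)$. Define
\[
g=\sum_{n}\frac{1}{M_{n}}\chi_{B_{n}};
\]
the $B_{n}$ are pairwise disjoint, so at each point at most one summand is nonzero, and the series converges uniformly, whence $g\in\mathfrak{B}_{1}(X)$.

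For the lower bound $\beta(hg)>\omega^{\kappa}$: on $K_{n}$ we have $hg=M_{n}^{-1}h\,\chi_{B_{n}}$, so $|hg|>1$ on $B_{n}$ and $hg=0$ on $K_{n}\setminus B_{n}$. A routine induction then shows $D^{\rho}(\chi_{B_{n}},1,P)\subseteq D^{\rho}(hg,1,P)$ for every closed $P\subseteq K_{n}$, and combining this with the clopenness of each $K_{m}$ in $K$ gives $K_{m}^{(\rho)}\subseteq D^{\rho}(hg,1,K)$ whenever $\rho\leq\rho_{m}$. Now fix $n$ and a neighbourhood $U$ of $x_{0}$, choose $m>n$ with $K_{m}\subseteq U$; then $K_{m}^{(\rho_{n})}\subseteq U\cap D^{\rho_{n}}(hg,1,K)$ meets both $B_{m}$ and $K_{m}\setminus B_{m}$ (because $D^{\rho_{n}+1}(\chi_{B_{m}},1,K_{m})=K_{m}^{(\rho_{n}+1)}\neq\emptyset$, which forces $\chi_{B_{m}}$ to oscillate on $K_{m}^{(\rho_{n})}$), so $U$ contains two points of $D^{\rho_{n}}(hg,1,K)$ on which $hg$ differs by more than $1$. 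Hence $x_{0}\in D^{\rho_{n}+1}(hg,1,K)$ for every $n$; since $\rho_{n}+1$ is cofinal in $\omega^{\kappa}$ this yields $x_{0}\in D^{\omega^{\kappa}}(hg,1,K)\subseteq D^{\omega^{\kappa}}(hg,1,X)$, so $\beta(hg,1)>\omega^{\kappa}$ and $hg\notin\mathfrak{B}_{1}^{\kappa}(X)$.

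Finally one must confirm $g\in\mathfrak{B}_{1}^{\kappa}(X)$, i.e.\ $\beta(g,\varepsilon)\leq\omega^{\kappa}$ for each $\varepsilon>0$. Here the decisive observation is that $g$ takes values in $\{0\}\cup\{M_{n}^{-1}:n\in\N\}$, so $|g(u)-g(v)|\geq\varepsilon$ forces one of $u,v$ to lie in $\bigcup_{n\leq N_{\varepsilon}}B_{n}$ with $N_{\varepsilon}=\lfloor\log_{2}(1/\varepsilon)\rfloor$; consequently $D^{1}(g,\varepsilon,X)\subseteq E:=\bigcup_{n\leq N_{\varepsilon}}K_{n}$, a finite disjoint union of sets clopen in $E$ on each of which $g$ equals $M_{n}^{-1}\chi_{B_{n}}$ with $M_{n}^{-1}\geq\varepsilon$. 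Localising the derivation to these clopen pieces gives $D^{\rho}(g,\varepsilon,X)\cap K_{n}\subseteq D^{\rho-1}(\chi_{B_{n}},1,K_{n})$ for $\rho\geq 2$, and the right-hand side is empty once $\rho\geq\rho_{n}+2$; hence $D^{\rho_{N_{\varepsilon}}+2}(g,\varepsilon,X)=\emptyset$, i.e.\ $\beta(g,\varepsilon)\leq\rho_{N_{\varepsilon}}+2<\omega^{\kappa}$, and $\beta(g)=\sup_{\varepsilon>0}\beta(g,\varepsilon)\leq\omega^{\kappa}$. I expect the main obstacle to be two-fold: proving the combinatorial lemma on $[1,\omega^{\lambda}]$ cleanly, and carrying out the bookkeeping that simultaneously keeps $\beta(g,\varepsilon)$ strictly below $\omega^{\kappa}$ at every scale while driving $\beta(hg,1)$ past $\omega^{\kappa}$ --- the amplification by $h$, where $|h|>M_{n}$ on the $n$-th block but the coefficient of $\chi_{B_{n}}$ is only $M_{n}^{-1}$, is precisely what decouples these two estimates.
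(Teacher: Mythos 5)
Your proposal is correct and follows essentially the same route as the paper's proof: both fix a point of $\cap_{M}h_{M}^{\omega^{\kappa}}(X)$, use Lemma \ref{l4.13} to plant compact copies of $[1,\omega^{\rho_n}]$ on which $|h|$ is large, accumulating at that point, and then build $g$ out of maximally oscillating $\{0,1\}$-valued functions scaled down on these blocks so that $\beta(g)\leq\omega^{\kappa}$ while $\beta(hg,1)>\omega^{\kappa}$. The only differences are cosmetic: the paper sets $g=g_n/h$ on the $n$-th block so that $gh$ is literally the oscillating function there (and it invokes, rather than proves, the existence of such a function on $[1,\omega^{\rho_n}]$ whose $\rho_n$-th derived set is a single point --- exactly your combinatorial lemma), whereas you scale by $2^{-n}$ and transfer the oscillation from $\chi_{B_n}$ to $hg$ by a short extra induction.
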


\begin{proof}
Suppose that $x\in\cap_{M}h_{M}^{\omega^{\kappa}}(X)$. Let $(\kappa_{n})$ be a
sequence of ordinals strictly increasing to $\omega^{\kappa}.$ For for all
$n\in\mathbb{N},$ $x\in h_{n}^{\omega^{\kappa}}(X)\subseteq h_{n}^{\kappa
_{n}+1}(X).$ Pick $x_{1}\in h_{1}^{\kappa_{1}}(X)\cap B(x,1)$, distinct from
$x$, such that $|h(x_{1})|>1.$ Inductively, for any $n>1$, let $d_{n-1}%
=d(x_{n-1},x)$ and choose $x_{n}\in h_{n}^{\kappa_{n}}(X)\cap B(x,\frac{1}%
{4}d_{n-1})$, distinct from $x$, such that $|h(x_{n})|>n.$ By Lemma
\ref{l4.13}, for each $n,$ there exists a compact set $F_{n}\subseteq
B(x_{n},\frac{1}{2}d_{n})$, homeomorphic to $[1,\omega^{\kappa_{n}}]$ such
that $|h(u)|>n$ for all $u\in F_{n}\setminus\{x_{n}\}$ and that $h_{n}%
^{\kappa_{n}}(F_{n})=\{x_{n}\}=F_{n}^{(\kappa_{n})}.$ Since $|h(x_{n})|>n$ by
choice, we have in fact that $|h(u)|>n$ for all $u\in F_{n}$. Since $F_{n}$ is
homeomorphic to $[1,\omega^{\kappa_{n}}]$, there is a continuous
$\{0,1\}$-valued function $g_{n}$ on $F_{n}$ such that $g_{n}(x_{n})=1$ and
that $D^{\kappa_{n}}(g_{n},1,F_{n})=\{x_{n}\}$. Define $g:X\rightarrow
\mathbb{R}$ by
\[
g(t)=%
\begin{cases}
\frac{g_{n}(t)}{h(t)} & \text{if $t\in F_{n}$ for some $n$},\\
0 & \text{if $t\notin\cup F_{n}$}.
\end{cases}
\]
For all $N\in\mathbb{N},$ $D^{1}(g,\frac{1}{N},X)\subseteq\overline{\cup
F_{n}}=(\cup F_{n})\cup\{x\}.$ If $n>2N$, $|g|<\frac{1}{2N}$ on the set
$F_{n}$. Thus $D^{2}(g,\frac{1}{N},X)\cap F_{n}=\emptyset$. Hence
$D^{2}(g,\frac{1}{N},X)\subseteq\cup_{n=1}^{2N}F_{n}\cup\{x\}$. As $x$ is an
isolated point in $\cup_{n=1}^{2N}F_{n}\cup\{x\}$ and $F_{n}^{(\kappa_{2N}%
+1)}=\emptyset$ if $n\leq2N$, $\beta(g,\frac{1}{N})\leq2+\kappa_{2N}+1$. It
follows that $\beta(g)\leq\omega^{\kappa}.$ However,
\[
(gh)(t)=%
\begin{cases}
g_{n}(t) & \text{if $t\in F_{n}$ for some $n$},\\
0 & \text{if $t\notin\cup F_{n}$}.
\end{cases}
\]
Therefore, $x_{n}\in D^{\kappa_{n}}(g_{n},F_{n},1)\subseteq D^{\kappa_{n}%
}(gh,1,X)$ for all $n$. Since $(x_{n})$ converges to $x$ and $(\kappa_{n})$
increases to $\omega^{\kappa}$, $x\in D^{\omega^{\kappa}}(gh,1,X)$. Therefore,
$\beta(gh)>\omega^{\kappa}$. It follows that $h\notin{\mathcal{M}}%
(\kappa,\kappa)$.
\end{proof}

Let $h$ be a real valued function on $X$ and let $\xi$ and $\zeta$ be
countable ordinals. Say that a closed subset $P$ of $X$ has \emph{property
$(\xi,\zeta)$ with respect to $h$} if for any $x\in P$ and any $\varepsilon
>0$, there are a closed set $Q$ and a function $g:Q\to{\mathbb{N}}$ such that
$Q \subseteq B(x,\varepsilon)$, $g =1$ on $D^{\xi}(g,1,Q)\cup\{x\}$ and $x\in
D^{\zeta}(gh,1,Q)$. A sequence $(x_{n})$ is said to \emph{converge
nontrivially} to $x$ if $(x_{n})$ is a sequence of distinct points, all
different from $x$, that converges to $x$.

\begin{prop}
\label{p10} Let $\alpha, \xi,\zeta$ be countable ordinals. If $P$ is a closed
set in $X$ that has property $(\xi,\zeta)$ with respect to $h$, then
$D^{\alpha}(h,1,P)$ has property $(\xi,\zeta+\alpha)$ with respect to $h$.
\end{prop}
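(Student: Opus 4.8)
The natural approach is induction on $\alpha$. For $\alpha = 0$ there is nothing to prove since $D^0(h,1,P) = P$. The crux is the successor step: assuming $D^\alpha(h,1,P)$ has property $(\xi, \zeta+\alpha)$, I want to show $D^{\alpha+1}(h,1,P)$ has property $(\xi, \zeta+\alpha+1)$. So fix $x \in D^{\alpha+1}(h,1,P)$ and $\varepsilon > 0$. Since $x$ is an accumulation point of $D^\alpha(h,1,P)$ with large oscillation of $h$ nearby, I can extract a sequence $(x_n)$ in $D^\alpha(h,1,P) \cap B(x,\varepsilon/2)$ converging nontrivially to $x$, with the geometric separation condition $d(x_{n+1},x) < \tfrac14 d(x_n,x)$, and — the key point — with $|h(x_n) - h(x)| \ge 1$ infinitely often, so after passing to a subsequence $|h(x_n) - h(\text{some reference})| \ge 1$, giving genuine oscillation of $h$ between the $x_n$'s (or between $x_n$ and $x$).

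**Main construction.** For each $n$, apply the inductive hypothesis at the point $x_n \in D^\alpha(h,1,P)$ with radius $\tfrac{d_n}{2}$ (where $d_n = d(x_n,x)$) to obtain a closed set $Q_n \subseteq B(x_n, d_n/2)$ and a function $g_n : Q_n \to \mathbb{N}$ with $g_n = 1$ on $D^\xi(g_n,1,Q_n) \cup \{x_n\}$ and $x_n \in D^{\zeta+\alpha}(g_n h, 1, Q_n)$. The balls $B(x_n, d_n/2)$ are pairwise disjoint and accumulate only at $x$, so $Q = \{x\} \cup \bigcup_n Q_n$ is closed, contained in $B(x,\varepsilon)$, with each $Q_n$ relatively clopen in $Q \setminus \{x\}$. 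Define $g$ on $Q$ by $g|_{Q_n} = g_n$ and $g(x) = 1$. Since each $Q_n$ is clopen in $Q \setminus \{x\}$, one checks $D^\xi(g,1,Q) \cap Q_n = D^\xi(g_n,1,Q_n)$, and the only remaining issue is whether $x \in D^\xi(g,1,Q)$; but $g \equiv 1$ near $x$ on the pieces, so taking $g(x)=1$ makes $g = 1$ on $D^\xi(g,1,Q) \cup \{x\}$ as required (one must verify $x$ is in $D^\xi$ only if forced, and if so $g(x)=1$ handles it). For the rank of $gh$: on each $Q_n$, $x_n \in D^{\zeta+\alpha}(g_n h, 1, Q_n) = D^{\zeta+\alpha}(gh,1,Q) \cap Q_n$ (clopen-ness again), and since $|h(x_n) - h(x)| \ge 1$ (after the subsequence extraction) with $(x_n) \to x$, one more derivative at the level $\zeta+\alpha$ catches $x$: $x \in D^{\zeta+\alpha+1}(gh,1,Q)$. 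This gives property $(\xi, \zeta+\alpha+1)$.

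**Limit step.** For limit $\alpha$, fix $x \in D^\alpha(h,1,P) = \bigcap_{\alpha' < \alpha} D^{\alpha'}(h,1,P)$ and $\varepsilon > 0$. Choose $\alpha_k \nearrow \alpha$ strictly. Since $x \in D^{\alpha_k + 1}(h,1,P)$, pick $x_k \in D^{\alpha_k}(h,1,P)$ converging nontrivially to $x$ with the geometric condition and $|h(x_k)-h(x)| \ge 1$ (again using that $x$ is a point of oscillation). Apply the inductive hypothesis at level $\alpha_k$ to get $Q_k \subseteq B(x_k, d_k/2)$ and $g_k : Q_k \to \mathbb{N}$ with $g_k = 1$ on $D^\xi(g_k,1,Q_k) \cup \{x_k\}$ and $x_k \in D^{\zeta+\alpha_k}(g_k h, 1, Q_k)$. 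Assemble $Q = \{x\} \cup \bigcup_k Q_k$ and $g$ as before. The clopen-ness of each $Q_k$ in $Q \setminus \{x\}$ gives $x_k \in D^{\zeta+\alpha_k}(gh,1,Q)$, hence $D^{\zeta+\alpha_k}(gh,1,Q) \ni x_k \to x$ shows $x \in \bigcap_k D^{\zeta+\alpha_k}(gh,1,Q) \supseteq D^{\zeta+\alpha}(gh,1,Q)$'s predecessor — more precisely $x$ lies in $D^{\sup(\zeta+\alpha_k)}(gh,1,Q) = D^{\zeta+\alpha}(gh,1,Q)$ since $\zeta + \alpha_k \nearrow \zeta + \alpha$. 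Together with the condition on $g$, this is property $(\xi, \zeta+\alpha)$.

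**Main obstacle.** The delicate point throughout is extracting $(x_n)$ so that $|h(x_n)-h(x)| \ge 1$ (or more precisely arranging genuine oscillation of $h$ at level $1$ localized on the $Q_n$-pieces and "felt" at $x$): the definition of $D^{\alpha+1}(h,1,P)$ only says some pair $x_1, x_2$ near $x$ has $|h(x_1)-h(x_2)| \ge 1$, so one must argue (as in the proof of Proposition~\ref{p4.3}, splitting via the triangle inequality) that after passing to a subsequence the values $h(x_n)$ stay bounded away from $h(x)$, or alternatively build the oscillation into the $g_n h$ computation directly. Getting this right — and confirming that the assembled $g$ genuinely satisfies $g = 1$ on $D^\xi(g,1,Q) \cup \{x\}$ rather than merely on the pieces — is where the real care is needed; the rest is bookkeeping with clopen decompositions and the monotonicity/continuity of the $D$-derivative along the ordinal index.
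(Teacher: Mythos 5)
Your assembly construction (disjoint pieces $Q_n$ shrinking to $x$, $g$ defined piecewise with $g(x)=1$, clopenness giving $D^{\xi}(g,1,Q)\subseteq\{x\}\cup\bigcup_n D^{\xi}(g_n,1,Q_n)$, and the limit step via $\zeta+\alpha_k\nearrow\zeta+\alpha$) is exactly the paper's. The gap is in the one step you yourself flag as the obstacle, and the fix you propose for it does not work. From $x\in D^{\alpha+1}(h,1,P)=D^{1}(h,1,P_0)$ you cannot in general extract $x_n\to x$ in $P_0$ with $|h(x_n)-h(x)|\geq 1$: the definition only provides \emph{pairs} $y,z$ near $x$ with $|h(y)-h(z)|\geq 1$, and the triangle-inequality split (as in Proposition \ref{p4.3}) only yields points with $|h(x_n)-h(x)|\geq \tfrac12$. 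A concrete failure: $h(x)=0$ while $h$ takes both values $\tfrac12$ and $-\tfrac12$ on $P_0$ arbitrarily close to $x$. Losing the constant is fatal here, because property $(\xi,\zeta)$ is defined with the fixed threshold $1$ and the proposition is iterated transfinitely (and then fed into Propositions \ref{p11} and \ref{p13} at threshold $1$), so you cannot halve $\varepsilon$ at each successor step the way one can in a single application of Proposition \ref{p4.3}.

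The paper's resolution keeps the oscillation in pairs rather than anchoring it at $h(x)$: choose the sequence $(x_n)$ in $P_0\cap B(x,\varepsilon)$ converging nontrivially to $x$ so that for every $N$ there are $m,n\in\mathbb{N}\cup\{\infty\}$ with $m,n\geq N$ and $|h(x_m)-h(x_n)|\geq 1$ (this is precisely what $x\in D^{1}(h,1,P_0)$ gives, allowing one of the pair to be $x$ itself). Because the inductive hypothesis forces $g_n(x_n)=1$ and you set $g(x)=1$, one has $(gh)=h$ at all these points; and since each $x_n\in D^{\zeta+\alpha}(g_nh,1,Q_n)\subseteq D^{\zeta+\alpha}(gh,1,Q)$ and this set is closed (so $x$ lies in it too), every neighborhood of $x$ contains a pair of points of $D^{\zeta+\alpha}(gh,1,Q)$ whose $gh$-values differ by at least $1$, whence $x\in D^{\zeta+\alpha+1}(gh,1,Q)$. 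This is the reason the clause ``$g=1$ on $D^{\xi}(g,1,Q)\cup\{x\}$'' is built into the property: it is what lets pairwise oscillation of $h$ along the sequence transfer to $gh$. Your ``alternatively build the oscillation into the $g_nh$ computation directly'' is the right instinct, but as written the argument is not carried out, so the successor step stands as a genuine gap. (Your limit step is fine; the unjustified claim $|h(x_k)-h(x)|\geq 1$ there is not needed and should simply be dropped.)
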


\begin{proof}
We prove the proposition by induction on $\alpha$. The case $\alpha=0$ is the
hypothesis. Assume that the proposition holds for some $\alpha<\omega_{1}$.
Let $x_{\infty}\in D^{\alpha+1}(h,1,P)$. Set $P_{0}=D^{\alpha}(h,1,P)$. Let
$\varepsilon>0$ be given. There exists a sequence $(x_{n})$ in $P_{0}\cap
B(x_{\infty},\varepsilon)$ that converges nontrivially to $x_{\infty}$ and
that for any $N\in{\mathbb{N}}$, there are $m,n\in\mathbb{N}\cup\{\infty\}$,
$m,n\geq N$, so that $|h(x_{m})-h(x_{n})|\geq1$. By the inductive hypothesis,
$P_{0}$ has property $(\xi,\zeta+\alpha)$ with respect to $h$. We can find
pairwise disjoint closed sets $Q_{n}$ in $B(x_{\infty},\varepsilon)$ and
functions $g_{n}:Q_{n}\rightarrow{\mathbb{N}}$ such that $x_{\infty}\notin
Q_{n}$, $\operatorname{diam}Q_{n}\rightarrow0$, $g_{n}=1$ on $D^{\xi}%
(g_{n},1,Q_{n})\cup\{x_{n}\}$ and $x_{n}\in D^{\zeta+\alpha}(g_{n}h,1,Q_{n})$.
Let $Q=\{x_{\infty}\}\cup(\bigcup Q_{n})$. Then $Q$ is a closed set in
$B(x_{\infty},\varepsilon)$. Define $g:Q\rightarrow{\mathbb{N}}$ by
$g(u)=g_{n}(u)$ if $u\in Q_{n}$ and $g(x_{\infty})=1$. We have
\[
D^{\xi}(g,1,Q)\subseteq\{x_{\infty}\}\cup(\bigcup D^{\xi}(g_{n},1,Q_{n})).
\]
Hence $g=1$ on $D^{\xi}(g,1,Q)\cup\{x_{\infty}\}$. Also,
\[
x_{n}\in D^{\zeta+\alpha}(g_{n}h,1,Q_{n})\subseteq D^{\zeta+\alpha
}(gh,1,Q)\text{ for all $n$}.
\]
Since that latter is a closed set, $x_{\infty}\in D^{\zeta+\alpha}(gh,1,Q)$ as
well. Let $U$ be an open neighborhood of $x_{\infty}$. There exists $N$ such
that $x_{n}\in U$ for all $n\geq N$. By choice, there are $m,n\in
\mathbb{N}\cup\{\infty\}$, $m,n\geq N$, such that
\[
|(gh)(x_{m})-(gh)(x_{n})|=|h(x_{m})-h(x_{n})|\geq1.
\]
Since $x_{m},x_{n}\in D^{\zeta+\alpha}(gh,1,Q)$, $x_{\infty}\in D^{\zeta
+\alpha+1}(gh,1,Q)$. This shows that $D^{\alpha+1}(h,1,P)$ has property
$(\xi,\zeta+\alpha+1)$ with respect to $h$.

Now suppose that $\alpha$ is a countable limit ordinal and that the
proposition has been verified for all $\alpha^{\prime}<\alpha$. Let
$x_{\infty}\in D^{\alpha}(h,1,P)$ and $\varepsilon>0$ be given. Choose a
sequence $(\alpha_{n})$ of ordinals strictly increasing to $\alpha$. There is
a sequence $(x_{n})$ converging nontrivially to $x_{\infty}$ so that $x_{n}\in
D^{\alpha_{n}}(h,1,P)\cap B(x_{\infty},\varepsilon)$ for all $n$. By the
inductive hypothesis, $D^{\alpha_{n}}(h,1,P)$ has property $(\xi,\zeta
+\alpha_{n})$ with respect to $h$. Let $\varepsilon>0$ be given. We can find
pairwise disjoint closed sets $Q_{n}$ in $B(x_{\infty},\varepsilon)$ and
functions $g_{n}:Q_{n}\rightarrow{\mathbb{N}}$ such that $x_{\infty}\notin
Q_{n}$, $\operatorname{diam}Q_{n}\rightarrow0$, $g_{n}=1$ on $D^{\xi}%
(g_{n},1,Q_{n})\cup\{x_{n}\}$ and $x_{n}\in D^{\zeta+\alpha_{n}}%
(g_{n}h,1,Q_{n})$. Let $Q=\{x_{\infty}\}\cup(\bigcup Q_{n})$. Then $Q$ is a
closed set in $B(x_{\infty},\varepsilon)$. Define $g:Q\rightarrow{\mathbb{N}}$
by $g(u)=g_{n}(u)$ if $u\in Q_{n}$ and $g(x_{\infty})=1$. For each $n$,
$D^{\xi}(g,1,Q_{n})=D^{\xi}(g_{n},1,Q_{n})$. Thus $D^{\xi}(g,1,Q)\subseteq
(\bigcup D^{\xi}(g_{n},1,Q_{n}))\cup\{x_{\infty}\}$. Hence $g=1$ on $D^{\xi
}(g,1,Q)\cup\{x_{\infty}\}$. Finally, for each $n$,
\[
x_{n}\in D^{\zeta+\alpha_{n}}(g_{n}h,1,Q_{n})\subseteq D^{\zeta+\alpha_{n}%
}(gh,1,Q).
\]
Since the latter set is closed, $x_{\infty}\in D^{\zeta+\alpha_{n}}(gh,1,Q)$
for all $n$. Thus $x_{\infty}\in D^{\zeta+\alpha}(gh,1,Q)$. This proves that
$D^{\alpha}(h,1,P)$ has property $(\xi,\zeta+\alpha)$ with respect to $h$.
\end{proof}

Observe that for any nonzero $c\in{\mathbb{R}}$, any real-valued function $h$
on $X$, any ordinals $\alpha,\xi$ and any closed set $P$ in $X$,
$\Omega^{\alpha}_{ch,\xi}(P) = \Omega^{\alpha}_{h,\xi}(P)$. Also, by
Proposition \ref{p4.3}, $\Omega^{\alpha}_{h,\xi}(P) =\cap_{n}[\cup_{\eta
>0}D^{\xi_{n}}(h,\eta,P)]^{\prime}$.

\begin{prop}
\label{p11} Let $\alpha, \xi$ be countable ordinals. For any closed set $P$ in
$X$, the set $\Omega^{\alpha}_{h,\xi}(P)$ has property $(\alpha,\omega
^{\omega^{\xi}}\cdot\alpha)$ with respect to $h$.
\end{prop}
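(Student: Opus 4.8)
I would prove this by transfinite induction on $\alpha$, carrying a mildly strengthened invariant: the witnessing function $g$ in the definition of property $(\alpha,\omega^{\omega^{\xi}}\cdot\alpha)$ can always be chosen so that $g=1$ on $\bigcup_{\epsilon>0}D^{\alpha}(g,\epsilon,Q)$, not merely on $D^{\alpha}(g,1,Q)$. The same strengthened invariant is produced by the construction in Proposition \ref{p10}, since there $g$ is built solely from pieces enjoying that property together with the constant $1$, so I will invoke \ref{p10} in this stronger form. The base case $\alpha=0$ is immediate: $\Omega^{0}_{h,\xi}(P)=P$, and for $x\in P$ and $\varepsilon>0$ one takes $Q=\{x\}$ and $g\equiv 1$, which clearly satisfies $x\in D^{0}(gh,1,Q)=Q$ and $g=1$ on $D^{0}(g,\epsilon,Q)\cup\{x\}=\{x\}$.

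For the successor step, put $P_{0}=\Omega^{\alpha}_{h,\xi}(P)$, so $\Omega^{\alpha+1}_{h,\xi}(P)=\Omega_{h,\xi}(P_{0})$. By the inductive hypothesis, together with the observation preceding the proposition that $\Omega^{\alpha}_{ch,\xi}(P)=\Omega^{\alpha}_{h,\xi}(P)$ for $c\neq 0$ (and the elementary fact that a witness $(Q,g)$ with respect to $h$ still witnesses the same property with respect to $ch$ when $|c|\geq 1$, because this only enlarges the sets $D^{\zeta}(gh,\cdot,Q)$), the set $P_{0}$ has the strengthened property $(\alpha,\omega^{\omega^{\xi}}\alpha)$ with respect to $Nh$ for every integer $N\geq 1$. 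Fix $x_{\infty}\in\Omega_{h,\xi}(P_{0})$ and $\varepsilon>0$. Since $x_{\infty}\in[\bigcup_{\eta}D^{\xi_{k}}(h,\eta,P_{0})]'$ for every $k$ (the definition of $\Omega_{h,\xi}$, rewritten with $D$ via the observation), choose $y_{k}\to x_{\infty}$ converging nontrivially with $y_{k}\in D^{\xi_{k}}(h,1/N_{k},P_{0})=D^{\xi_{k}}(N_{k}h,1,P_{0})$ for suitable $N_{k}\in\mathbb{N}$, arranging the balls $B(y_{k},\tfrac12 d(y_{k},x_{\infty}))$ to be pairwise disjoint, to omit $x_{\infty}$, and to have radii tending to $0$. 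Applying Proposition \ref{p10} to the function $N_{k}h$, the set $D^{\xi_{k}}(N_{k}h,1,P_{0})$ has the strengthened property $(\alpha,\omega^{\omega^{\xi}}\alpha+\xi_{k})$ with respect to $N_{k}h$; applying it at $y_{k}$ inside the $k$-th ball yields a closed $Q_{k}\subseteq B(y_{k},\tfrac12 d(y_{k},x_{\infty}))$ and $\tilde g_{k}:Q_{k}\to\mathbb{N}$ with $y_{k}\in D^{\omega^{\omega^{\xi}}\alpha+\xi_{k}}(\tilde g_{k}\cdot N_{k}h,1,Q_{k})$ and $\tilde g_{k}=1$ on $\bigcup_{\epsilon}D^{\alpha}(\tilde g_{k},\epsilon,Q_{k})\cup\{y_{k}\}$. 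Let $Q=\{x_{\infty}\}\cup\bigcup_{k}Q_{k}$ and define $g:Q\to\mathbb{N}$ by $g=N_{k}\tilde g_{k}$ on $Q_{k}$ and $g(x_{\infty})=1$.

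Now $gh=\tilde g_{k}\cdot N_{k}h$ on the relatively clopen set $Q_{k}$, so $y_{k}\in D^{\omega^{\omega^{\xi}}\alpha+\xi_{k}}(gh,1,Q)$; as $y_{k}\to x_{\infty}$ and $\omega^{\omega^{\xi}}\alpha+\xi_{k}\uparrow\omega^{\omega^{\xi}}(\alpha+1)$, closedness of the derived sets forces $x_{\infty}\in D^{\omega^{\omega^{\xi}}(\alpha+1)}(gh,1,Q)$. For the normalization, given $\epsilon>0$ and $k$ one has $D^{\alpha}(g,\epsilon,Q)\cap Q_{k}=D^{\alpha}(N_{k}\tilde g_{k},\epsilon,Q_{k})=D^{\alpha}(\tilde g_{k},\epsilon/N_{k},Q_{k})$, on which $\tilde g_{k}\equiv 1$ by the strengthened invariant; hence $g$ is constant there and $D^{\alpha+1}(g,\epsilon,Q)\cap Q_{k}=\emptyset$, so $\bigcup_{\epsilon}D^{\alpha+1}(g,\epsilon,Q)\subseteq\{x_{\infty}\}$, where $g=1$. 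Thus $(Q,g)$ witnesses the strengthened property $(\alpha+1,\omega^{\omega^{\xi}}(\alpha+1))$ at $x_{\infty}$. The limit case is analogous but requires no rescaling: for $\alpha_{k}\uparrow\alpha$ and $x_{\infty}\in\Omega^{\alpha}_{h,\xi}(P)=\cap_{\alpha'<\alpha}\Omega^{\alpha'}_{h,\xi}(P)$, since $\alpha_{k}+1<\alpha$ we have $x_{\infty}\in\Omega_{h,\xi}(\Omega^{\alpha_{k}}_{h,\xi}(P))$, so $x_{\infty}$ is an accumulation point of $\Omega^{\alpha_{k}}_{h,\xi}(P)$; picking $w_{k}\to x_{\infty}$ nontrivially in these sets and assembling $Q=\{x_{\infty}\}\cup\bigcup Q_{k}$, $g=g_{k}$ on $Q_{k}$, $g(x_{\infty})=1$ from the inductive witnesses, the rank computation goes through as above, and $D^{\alpha}(g,\epsilon,Q)\cap Q_{k}=D^{\alpha}(g_{k},\epsilon,Q_{k})=\emptyset$ because $\alpha\geq\alpha_{k}+1$ and $g_{k}$ has $g$-oscillation rank $\leq\alpha_{k}$ at every threshold.

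The step I expect to be the main obstacle is the successor case, and within it the threshold mismatch: $h$ oscillates across $D^{\xi_{k}}(h,\eta_{k},P_{0})$ by only $\eta_{k}$, which may be less than $1$, while property $(\xi,\zeta)$ is phrased with threshold $1$ throughout. This is what forces the block-wise rescaling $g\mapsto N_{k}g$ on $Q_{k}$, and that rescaling is compatible with the required normalization ``$g=1$ wherever $g$ oscillates to depth $\alpha$'' only thanks to the strengthened invariant, which says $g=1$ on the depth-$\alpha$ oscillation set at \emph{every} positive threshold. Verifying that this invariant genuinely propagates through both Proposition \ref{p10} and the assembly above is the technical heart of the proof; everything else is routine bookkeeping with iterated derivations on clopen pieces.
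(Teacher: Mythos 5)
Your proof is correct and follows essentially the same route as the paper's: induction on $\alpha$, with the successor step handled by picking points $y_k\in D^{\xi_k}(h,1/N_k,P_0)$ accumulating at $x_\infty$, rescaling to $N_kh$, invoking Proposition \ref{p10} for $N_kh$, and assembling the block witnesses with $g=N_k\tilde g_k$ on $Q_k$, and the limit step by assembling witnesses from the sets $\Omega^{\alpha_k}_{h,\xi}(P)$. Two small remarks: your ``strengthened invariant'' needs no separate propagation through Proposition \ref{p10}, since witnesses are $\mathbb{N}$-valued and hence $D^{\alpha}(g,\epsilon,Q)\subseteq D^{\alpha}(g,1,Q)$ for every $\epsilon>0$ (this is exactly the paper's ``integer-valued'' observation); and in the limit case the inductive witness only guarantees that $g_k=1$ on $D^{\alpha_k}(g_k,1,Q_k)$, i.e.\ rank at most $\alpha_k+1$ rather than $\alpha_k$, which still yields $D^{\alpha}(g,\epsilon,Q)\cap Q_k=\emptyset$ because $\alpha\geq\alpha_k+1$.
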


\begin{proof}
We prove the proposition by induction on $\alpha$. The case $\alpha=0$ is
trivial. Assume that the proposition holds for some $\alpha<\omega_{1}$. Let
$x\in\Omega_{h,\xi}^{\alpha+1}(P)$ and let $\varepsilon>0$ be given. Set
$P_{0}=\Omega_{h,\xi}^{\alpha}(P)$. For any open neighborhood $U$ of $x$,
there exists $k_{n}\in{\mathbb{N}}$ such that $U\cap D^{\xi_{n+1}}(h,\frac
{1}{k_{n}},P_{0})\neq\emptyset$. Thus there is a sequence $(x_{n})$ converging
nontrivially to $x$ so that $x_{n}\in D^{\xi_{n}}(h,\frac{1}{k_{n}}P_{0},)$.
We may assume that $x_{n}\in B(x,\varepsilon)$ for all $n$. By the inductive
hypothesis, $P_{0}=\Omega_{k_{n}h,\xi}^{\alpha}(P)$ has property
$(\alpha,\omega^{\omega^{\xi}}\cdot\alpha)$ with respect to $k_{n}h$. By
Proposition \ref{p10}, $D^{\xi_{n}}({h},\frac{1}{k_{n}},P_{0})=D^{\xi_{n}%
}(k_{n}h,1,P_{0})$ has property $(\alpha,\omega^{\omega^{\xi}}\cdot\alpha
+\xi_{n})$ with respect to $k_{n}h$. Thus, we can find pairwise disjoint
closed sets $Q_{n}$ in $B(x,\varepsilon)$ and functions $g_{n}:Q_{n}%
\rightarrow{\mathbb{N}}$ such that $x\notin Q_{n}$, $\operatorname{diam}%
Q_{n}\rightarrow0$, $g_{n}=1$ on $D^{\alpha}(g_{n},1,Q_{n})\cup\{x_{n}\}$ and
$x_{n}\in D^{\omega^{\omega^{\xi}}\cdot\alpha+\xi_{n}}(k_{n}g_{n}h,1,Q_{n})$.
Let $Q=\{x\}\cup(\bigcup Q_{n})$. Then $Q$ is a closed set in $B(x,\varepsilon
)$. Define $g:Q\rightarrow{\mathbb{N}}$ by setting $g(u)=k_{n}g_{n}(u)$ if
$u\in Q_{n}$ and $g(x)=1$. Then
\[
D^{\alpha}(g,1,Q_{n})=D^{\alpha}(g_{n},\frac{1}{k_{n}},Q_{n})=D^{\alpha}%
(g_{n},1,Q_{n}),
\]
where the second equality holds since $g_{n}$ is integer valued. This shows
that $g=k_{n}$ on the set $D^{\alpha}(g,1,Q_{n})$. Since $D^{\alpha
}(g,1,Q)\subseteq\cup_{n}D^{\alpha}(g,1,Q_{n})\cup\{x\}$ and each $Q_{n}$ is
relatively clopen in $Q$, it follows that $D^{\alpha+1}(g,1,Q)\subseteq\{x\}$.
Consequently, $g=1$ on $D^{\alpha+1}(g,1,Q)\cup\{x\}$. By choice of $Q_{n}$
and $g_{n}$, we also have $x_{n}\in D^{\omega^{\omega^{\xi}}\cdot\alpha
+\xi_{n}}(gh,1,Q_{n})\subseteq D^{\omega^{\omega^{\xi}}\cdot\alpha+\xi_{n}%
}(gh,1,Q)$ for all $n$. Hence, for all $n\geq m$, $x_{n}\in D^{\omega
^{\omega^{\xi}}\cdot\alpha+\xi_{m}}(gh,1,Q)$. As the latter is a closed set,
$x\in D^{\omega^{\omega^{\xi}}\cdot\alpha+\xi_{m}}(gh,1,Q)$. Since $(\xi_{m})$
increases to $\omega^{\omega^{\xi}}$, it follows that $x\in D^{\omega
^{\omega^{\xi}}\cdot\alpha+\omega^{\omega^{\xi}}}(gh,1,Q)=D^{\omega
^{\omega^{\xi}}\cdot(\alpha+1)}(gh,1,Q)$. This proves that $\Omega_{h,\xi
}^{\alpha+1}(P)$ has property $(\alpha+1,\omega^{\omega^{\xi}}\cdot
(\alpha+1))$ with respect to $h$.

Now assume that $\alpha$ is a limit ordinal and that the proposition has been
proved for all $\alpha^{\prime}< \alpha$. Choose a sequence $(\alpha_{n})$ of
ordinals strictly increasing to $\alpha$. Suppose that $x\in\Omega^{\alpha
}_{h,\xi}(P)$ and let $\varepsilon> 0$ be given. Then $x\in\Omega^{\alpha
_{n}+1}_{h,\xi}(P)$ for all $n$. There is a sequence $(x_{n})$ that converges
nontrivially to $x$ and that $x_{n} \in\Omega^{\alpha_{n}}_{h,\xi}(P) \cap
B(x,\varepsilon)$ for all $n$. By the inductive hypothesis, $\Omega
^{\alpha_{n}}_{h,\xi}(P)$ has property $(\alpha_{n},\omega^{\omega^{\xi}}%
\cdot\alpha_{n})$ with respect to $h$. Thus one can find pairwise disjoint
closed sets $Q_{n}$ in $B(x,\varepsilon)$ and functions $g_{n}:Q_{n}%
\to{\mathbb{N}}$ such that $x\notin Q_{n}$, $\operatorname{diam} Q_{n} \to0$,
$g_{n} =1$ on $D^{\alpha_{n}}(g_{n},1,Q_{n})\cup\{x_{n}\}$ and that $x_{n} \in
D^{\omega^{\omega^{\xi}}\cdot\alpha_{n}}(g_{n}h,1,Q_{n})$ for all $n$. Let $Q
= \{x\}\cup(\bigcup Q_{n})$. Then $Q$ is a closed set in $B(x,\varepsilon)$.
Define $g:Q\to{\mathbb{N}}$ by setting $g(u) = g_{n}(u)$ if $u \in Q_{n}$ and
$g(x) = 1$. For each $n$, $D^{\alpha_{n}}(g,1,Q_{n}) = D^{\alpha_{n}}%
(g_{n},1,Q_{n})$. Thus $D^{\alpha}(g,1,Q) \subseteq\{x\} \cup(\bigcup
D^{\alpha_{n}}(g_{n},1,Q_{n}))$. It follows that $g =1$ on $D^{\alpha
}(g,1,Q)\cup\{x\}$. For $n\geq m$, we also have
\[
x_{n} \in D^{\omega^{\omega^{\xi}}\cdot\alpha_{n}}(g_{n}h,1,Q_{n}) \subseteq
D^{\omega^{\omega^{\xi}}\cdot\alpha_{m}}(gh,1,Q).
\]
Since that last set is closed, $x \in D^{\omega^{\omega^{\xi}}\cdot\alpha_{m}%
}(gh,1,Q)$ for all $m$. Therefore, $x \in D^{\omega^{\omega^{\xi}}\cdot\alpha
}(gh,1,Q)$. This proves that $\Omega^{\alpha}_{h,\xi}(P)$ has property
$(\alpha,\omega^{\omega^{\xi}}\cdot\alpha)$ with respect to $h$. The induction
is complete.
\end{proof}

Recall that the relation $\kappa\sim(\xi,\alpha)$ was defined prior to Theorem
\ref{t9}.

\begin{prop}
\label{p13} Let $h$ be a real-valued function on $X$ and let $\kappa$ be a
countable ordinal, where $\kappa\sim(\xi,\alpha)$. Suppose that $\Omega
^{\omega^{\alpha}}_{h,\xi}(X) \neq\emptyset$. Then $h \notin{\mathcal{M}%
}(\alpha+1, \kappa)$.
\end{prop}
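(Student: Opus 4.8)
The strategy is to leverage Proposition \ref{p11}, which furnishes, for the closed set $P=X$, the combinatorial "property $(\alpha,\omega^{\omega^{\xi}}\cdot\alpha)$ with respect to $h$" at the set $\Omega^{\alpha}_{h,\xi}(X)$, and iterate it up to exponent $\omega^{\alpha}$. Concretely, I would first argue (by an induction on $\beta\le\omega^{\alpha}$, or more cleverly by a direct transfinite bookkeeping argument) that $\Omega^{\omega^{\alpha}}_{h,\xi}(X)$ itself has property $(\alpha,\omega^{\omega^{\xi}}\cdot\omega^{\alpha})$ with respect to $h$. The point is that Proposition \ref{p11} gives the property at each finite-stage set $\Omega^{\alpha}_{h,\xi}$; but the relevant hypothesis is nonemptiness of the $\omega^{\alpha}$-th iterate of $\Omega_{h,\xi}$, so I need to climb from $\Omega^{\alpha}_{h,\xi}$ to $\Omega^{\omega^{\alpha}}_{h,\xi}=(\Omega_{h,\xi})^{\omega^{\alpha}}$. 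This is a "self-composition" step analogous to Proposition \ref{p10}, but with $\Omega_{h,\xi}$ in place of the simple derivation $D(h,1,\cdot)$; the additional difficulty is that the target ordinal $\zeta$ grows as $\omega^{\omega^{\xi}}\cdot\beta$ rather than merely $\zeta+\beta$. Because $\Omega_{h,\xi}$ is a regular derivation (noted in the excerpt just before Proposition \ref{p7}), I expect this iteration to go through by a nearly verbatim copy of the induction in Proposition \ref{p11}, now parametrized so that at stage $\beta$ the set $\Omega^{\beta}_{h,\xi}(X)$ has property $(\alpha,\omega^{\omega^{\xi}}\cdot\beta)$ with respect to $h$ — indeed Proposition \ref{p11} as stated already does exactly this for all countable $\alpha$, so really I only need to invoke it with the exponent $\omega^{\alpha}$ directly: $\Omega^{\omega^{\alpha}}_{h,\xi}(X)$ has property $(\omega^{\alpha},\omega^{\omega^{\xi}}\cdot\omega^{\alpha})$ with respect to $h$. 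Wait — Proposition \ref{p11} gives property $(\alpha,\omega^{\omega^\xi}\cdot\alpha)$ for $\Omega^\alpha_{h,\xi}$, so applied with $\alpha$ replaced by $\omega^\alpha$ it yields that $\Omega^{\omega^\alpha}_{h,\xi}(X)$ has property $(\omega^\alpha,\omega^{\omega^\xi}\cdot\omega^\alpha)$ with respect to $h$. Since $\omega^\alpha\ge\alpha$ when... no: property $(\xi,\zeta)$ with a larger first coordinate is a stronger requirement on $g$, so I want the first coordinate as small as possible. Here I need the first coordinate to be exactly $\alpha$ (not $\omega^\alpha$), which is why the careful statement of Proposition \ref{p11} with the "diagonal" growth is what one wants: one checks the proof actually delivers, for the exponent $\omega^\alpha$, a $g$ with $D^{\alpha}(g,1,Q)$ small. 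I would re-examine Proposition \ref{p11}'s induction to confirm the first coordinate stays $\alpha$ while the exponent on $\Omega$ grows to $\omega^\alpha$; a short separate lemma stating "$\Omega^{\omega^\alpha}_{h,\xi}(X)$ has property $(\alpha,\omega^{\omega^\xi}\cdot\omega^\alpha)$ with respect to $h$" is the clean intermediate claim.

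**Main construction.** Granting that intermediate claim, fix a point $x_0\in\Omega^{\omega^{\alpha}}_{h,\xi}(X)$ (nonempty by hypothesis) and apply property $(\alpha,\omega^{\omega^{\xi}}\cdot\omega^{\alpha})$ with $x=x_0$ and any $\varepsilon>0$ (say $\varepsilon=1$): this produces a closed set $Q\subseteq X$ and a function $g:Q\to{\mathbb N}$ with $g=1$ on $D^{\alpha}(g,1,Q)\cup\{x_0\}$ and $x_0\in D^{\omega^{\omega^{\xi}}\cdot\omega^{\alpha}}(gh,1,Q)$. Extend $g$ to all of $X$ by $0$ off $Q$ (since $Q$ is closed this keeps $gh$ and $g$ Baire class one, and does not disturb the derived-set computations on $Q$, which is relatively clopen once we note $Q^c$ is open and both $g$, $gh$ vanish there). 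The condition $g=1$ on $D^{\alpha}(g,1,Q)$ forces $D^{\alpha+1}(g,1,Q)=\emptyset$ and hence, using that $g$ is integer-valued so $D^{1}(g,\varepsilon,\cdot)$ is the same for all $\varepsilon\in(0,1]$ and is empty for $\varepsilon>1$, we get $\beta(g)\le\alpha+1\le\omega^{\alpha+1}$, i.e. $g\in{\mathfrak B}_1^{\alpha+1}(X)$. On the other hand $x_0\in D^{\omega^{\omega^{\xi}}\cdot\omega^{\alpha}}(gh,1,X)$, and $\omega^{\omega^{\xi}}\cdot\omega^{\alpha}=\omega^{\omega^{\xi}+\alpha}=\omega^{\kappa}$ since $\kappa\sim(\xi,\alpha)$ means $\kappa=\omega^{\xi}+\alpha$ — wait, one must be careful: $\kappa=\omega^\xi+\alpha$ but I have $\omega^{\omega^\xi}\cdot\omega^\alpha=\omega^{\omega^\xi+\alpha}$, which equals $\omega^\kappa$ exactly when $\omega^\xi+\alpha=\kappa$; yes, that is precisely the relation $\kappa\sim(\xi,\alpha)$. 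So $\beta(gh,1)\ge\omega^{\omega^\xi}\cdot\omega^\alpha+1>\omega^\kappa$, whence $\beta(gh)>\omega^{\kappa}$, i.e. $gh\notin{\mathfrak B}_1^{\kappa}(X)$. Therefore $g$ is a function in ${\mathfrak B}_1^{\alpha+1}(X)$ whose product with $h$ is not in ${\mathfrak B}_1^{\kappa}(X)$, which is exactly the statement $h\notin{\mathcal M}(\alpha+1,\kappa)$.

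**Where the difficulty lies.** The only substantive point is the first paragraph: making sure the iteration of property $(\xi,\zeta)$ through $\Omega_{h,\xi}$ produces, at exponent $\omega^\alpha$, a witness with first coordinate still $\alpha$ (so that $\beta(g)$ stays at the cheap bound $\omega^{\alpha+1}$, not $\omega^{\omega^\alpha}$) while the second coordinate reaches $\omega^{\omega^\xi}\cdot\omega^\alpha=\omega^\kappa$. This is genuinely what Proposition \ref{p11} asserts — its statement is precisely "$\Omega^{\alpha}_{h,\xi}(X)$ has property $(\alpha,\omega^{\omega^{\xi}}\cdot\alpha)$", with matching first coordinate and exponent — so applying it \emph{with $\alpha$ itself} and then noting that the \emph{next} iterate, i.e. running the successor step of its own induction $\omega$-many times and then to $\alpha$ levels, is not even needed: I simply need the value at a single ordinal. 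The subtlety is that $\Omega^{\omega^\alpha}_{h,\xi}$ is \emph{not} literally $\Omega^{\alpha'}_{h,\xi}$ for the $\alpha'$ appearing in Proposition \ref{p11} unless $\omega^\alpha$ is in its range, which it is (Proposition \ref{p11} is stated for all countable $\alpha$). So in fact, applying Proposition \ref{p11} directly with its "$\alpha$" taken to be our $\omega^\alpha$ gives property $(\omega^\alpha,\omega^{\omega^\xi}\cdot\omega^\alpha)$ — first coordinate $\omega^\alpha$, which would only yield $\beta(g)\le\omega^\alpha+1$, \emph{not} $\le\alpha+1$. That is still fine: $\omega^\alpha+1\le\omega^{\alpha+1}$, so $g\in{\mathfrak B}_1^{\alpha+1}(X)$ all the same. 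Hence the main obstacle evaporates: one applies Proposition \ref{p11} with exponent $\omega^{\alpha}$ in place of $\alpha$, obtains a $g$ with $\beta(g)\le\omega^{\alpha}+1\le\omega^{\alpha+1}$ and $\beta(gh)>\omega^{\omega^{\xi}}\cdot\omega^{\alpha}=\omega^{\kappa}$, and concludes. The one genuine care-point in writing it up is the extension of $g$ off the closed set $Q$ and verifying it doesn't spoil the derived-set identities — routine, handled exactly as in the proof of Proposition \ref{p9.5}.
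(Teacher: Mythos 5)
Your final argument is correct and is essentially the paper's proof: apply Proposition \ref{p11} with its exponent taken to be $\omega^{\alpha}$, obtaining property $(\omega^{\alpha},\omega^{\omega^{\xi}}\cdot\omega^{\alpha})=(\omega^{\alpha},\omega^{\kappa})$ at a point of $\Omega^{\omega^{\alpha}}_{h,\xi}(X)$, extend the resulting $g$ by zero off the closed set $Q$ to get $G$ with $\beta(G)\leq 1+\omega^{\alpha}+1\leq\omega^{\alpha+1}$ while $x\in D^{\omega^{\kappa}}(Gh,1,X)$ gives $\beta(Gh)>\omega^{\kappa}$, exactly as in the paper. The intermediate detour about iterating the property and the first coordinate being $\alpha$ rather than $\omega^{\alpha}$ is unnecessary, as you yourself conclude.
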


\begin{proof}
By Proposition \ref{p11}, $\Omega_{h,\xi}^{\omega^{\alpha}}(X)$ has property
$(\omega^{\alpha},\omega^{\omega^{\xi}}\cdot\omega^{\alpha})=(\omega^{\alpha
},\omega^{\kappa})$ with respect to $h$. Let $x\in\Omega_{h,\xi}%
^{\omega^{\alpha}}(X)$. There exist a closed set $Q$ and a function
$g:Q\rightarrow{\mathbb{N}}$ such that $g=1$ on $D^{\omega^{\alpha}}(g,1,Q)$
and $x\in D^{\omega^{\kappa}}(gh,1,Q)$. Since $g$ is integer valued,
$D^{\omega^{\alpha}}(g,\varepsilon,Q)\subseteq D^{\omega^{\alpha}}(g,1,Q)$ for
any $\varepsilon>0$. Define $G:X\rightarrow{\mathbb{R}}$ by $G(u)=g(u)$ for
$u\in Q$ and $G(u)=0$ otherwise. Clearly, for any $\varepsilon>0$,
\[
D^{1+\omega^{\alpha}}(G,\varepsilon,X)\subseteq D^{\omega^{\alpha}%
}(G,\varepsilon,Q)=D^{\omega^{\alpha}}(g,\varepsilon,Q)\subseteq
D^{\omega^{\alpha}}(g,1,Q).
\]
Hence $G=g=1$ on $D^{1+\omega^{\alpha}}(G,\varepsilon,X)$. Thus $\beta
(G)\leq1+\omega^{\alpha}+1\leq\omega^{\alpha+1}$. However, $D^{\omega^{\kappa
}}(Gh,1,X)\supseteq D^{\omega^{\kappa}}(Gh,1,Q)=D^{\omega^{\kappa}%
}(gh,1,Q)\neq\emptyset$. Therefore, $\beta(Gh)>\omega^{\kappa}$. This proves
that $h\notin{\mathcal{M}}(\alpha+1,\kappa)$.
\end{proof}

If $0 < \kappa\sim(\xi,\alpha)$, then $\alpha+1\leq\kappa$ and thus
${\mathcal{M}}(\kappa,\kappa) \subseteq{\mathcal{M}}(\alpha+1,\kappa)$. The
next theorem incorporates Theorem \ref{t9} and its converse; it follows easily
from Theorem \ref{t9}, Propositions \ref{p9.5} and \ref{p13}.

\begin{thm}
\label{t14} Let $h$ be a real-valued function on $X$ and let $\kappa$ be a
nonzero countable ordinal. Then $h \in{\mathcal{M}}(\kappa,\kappa)$ if and
only if $\cap_{M}h_{M}^{\omega^{\kappa}}(X)=\emptyset$ and $\Omega
^{\omega^{\alpha}}_{h,\xi}(X) = \emptyset$.
\end{thm}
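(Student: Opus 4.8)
The plan is to assemble Theorem~\ref{t14} from the three results that have already been proved in this section, so the ``proof'' is really just a bookkeeping argument that the sufficient conditions of Theorem~\ref{t9} are also necessary. First I would observe that the forward direction (sufficiency) is literally Theorem~\ref{t9}: if $\cap_{M}h_{M}^{\omega^{\kappa}}(X)=\emptyset$ and $\Omega^{\omega^{\alpha}}_{h,\xi}(X)=\emptyset$, then $h\in{\mathcal{M}}(\kappa,\kappa)$, where $\kappa\sim(\xi,\alpha)$. So the only content to add is the converse: assuming $h\in{\mathcal{M}}(\kappa,\kappa)$, I must deduce that both $\cap_{M}h_{M}^{\omega^{\kappa}}(X)=\emptyset$ and $\Omega^{\omega^{\alpha}}_{h,\xi}(X)=\emptyset$.

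For the converse I would argue by contraposition on each of the two conditions separately. If $\cap_{M}h_{M}^{\omega^{\kappa}}(X)\neq\emptyset$, then Proposition~\ref{p9.5} says directly that $h\notin{\mathcal{M}}(\kappa,\kappa)$. If instead $\Omega^{\omega^{\alpha}}_{h,\xi}(X)\neq\emptyset$, then Proposition~\ref{p13} gives $h\notin{\mathcal{M}}(\alpha+1,\kappa)$; to convert this into a statement about ${\mathcal{M}}(\kappa,\kappa)$ I would invoke the inclusion ${\mathcal{M}}(\kappa,\kappa)\subseteq{\mathcal{M}}(\alpha+1,\kappa)$, which is valid because $\kappa\sim(\xi,\alpha)$ forces $\alpha+1\leq\kappa$ (as noted in the paragraph immediately preceding the theorem): a $(\kappa,\kappa)$-multiplier sends ${\mathfrak{B}}_{1}^{\alpha+1}(X)\subseteq{\mathfrak{B}}_{1}^{\kappa}(X)$ into ${\mathfrak{B}}_{1}^{\kappa}(X)$, hence is an $(\alpha+1,\kappa)$-multiplier. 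Therefore $h\notin{\mathcal{M}}(\alpha+1,\kappa)$ implies $h\notin{\mathcal{M}}(\kappa,\kappa)$, completing the contrapositive.

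Combining the two cases: if $h\notin{\mathcal{M}}(\kappa,\kappa)$ is false --- that is, if $h\in{\mathcal{M}}(\kappa,\kappa)$ --- then neither $\cap_{M}h_{M}^{\omega^{\kappa}}(X)\neq\emptyset$ nor $\Omega^{\omega^{\alpha}}_{h,\xi}(X)\neq\emptyset$ can hold, which is precisely the two emptiness assertions. Since there is genuinely no new mathematics here, I do not anticipate any real obstacle; the only point requiring a moment's care is the verification that $\alpha+1\le\kappa$ and hence the inclusion of multiplier classes, but this is immediate from the standard-form decomposition $\kappa=\omega^{\kappa_{1}}m_{1}+\cdots+\omega^{\kappa_{k}}m_{k}$ together with $\alpha=\omega^{\kappa_{1}}(m_{1}-1)+\omega^{\kappa_{2}}m_{2}+\cdots+\omega^{\kappa_{k}}m_{k}$, so that $\kappa=\omega^{\xi}+\alpha\ge\alpha+1$. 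I would write the proof in three or four sentences, citing Theorem~\ref{t9} for sufficiency and Propositions~\ref{p9.5} and \ref{p13} (with the inclusion remark) for necessity.
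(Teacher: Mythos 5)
Your proposal is correct and matches the paper's own argument: the paper proves Theorem \ref{t14} exactly by combining Theorem \ref{t9} (sufficiency) with Propositions \ref{p9.5} and \ref{p13} (necessity), using the same remark that $\alpha+1\leq\kappa$ gives ${\mathcal{M}}(\kappa,\kappa)\subseteq{\mathcal{M}}(\alpha+1,\kappa)$. No gaps; your verification that $\alpha<\kappa$ from the standard representation is the only detail the paper leaves implicit.
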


\end{document}